\crefname{section}{section}{sections}
\crefname{subsection}{subsection}{subsections}
\Crefname{section}{Section}{Sections}
\Crefname{subsection}{Subsection}{Subsections}
\Crefname{figure}{Figure}{Figures}
\crefname{hyp}{hypothesis}{hypotheses}
\Crefname{hyp}{Hypothesis}{Hypotheses}
\crefname{lem}{lemma}{lemmas}
\Crefname{lem}{Lemma}{Lemmas}
\Crefname{prop}{Proposition}{Propositions}
\Crefname{thm}{Theorem}{Theorems}
\Crefname{table}{Table}{Tables}
\Crefname{defi}{Definition}{Definitions}
\newtheorem[L]{thm}{Theorem}[section]
\newtheorem[L]{defi}{Definition}[section]
\newtheorem{prop}{Proposition}[section]
\newtheorem{lem}{Lemma}[section]
\newtheorem{rem}{Remark}[section]
\newcommand*{\bdiv}{%
\nonscript\mskip-\medmuskip\mkern5mu%
\mathbin{\operator@font div}\penalty900\mkern5mu%
\nonscript\mskip-\medmuskip
}
\renewcommand{\d}[1]{\ensuremath{\operatorname{d}\!{#1}}}
\newcommand{\seq}{\ell^1(\mathbb{N},\mathbb{R}_+)}
\newcommand{\seqseq}{\ell^1(\mathbb{N}^2,\mathbb{R}_+)}
\newcommand{\cl}[1]{\mathrm{cl}(#1)}
\newcommand{\trmat}[1]{\boldsymbol{Q}(#1)}
\title{A partition method for bounding continuous-time Markov chain models of general reaction network}
\author{Guillaume Ballif\footnote{Inria Saclay, 91120 Palaiseau, France. Mail: \href{mailto:guillaume.ballif@inria.fr}{guillaume.ballif@inria.fr}}, Laurent Pfeiffer\footnote{Université Paris-Saclay, CNRS, CentraleSupélec, Inria, Laboratoire des signaux et systèmes, and Fédération de Mathématiques de CentraleSupélec, 91190, Gif-sur-Yvette, France. Mail: \href{mailto:laurent.pfeiffer@inria.fr}{laurent.pfeiffer@inria.fr}}, and Jakob Ruess\footnote{Inria Saclay, 91120 Palaiseau, France. Mail: \href{mailto:jakob.ruess@inria.fr}{jakob.ruess@inria.fr}}}
\date{\today}
\begin{document}

\maketitle

\textbf{Abstract:} In this work, we present a general method to establish properties of multi-dimensional continuous-time Markov chains representing stochastic reaction networks. This method consists of grouping states together (via a partition of the state space), then constructing two one-dimensional birth and death processes that lower and upper bound the initial process under simple assumptions on the infinitesimal generators of the processes. The construction of these bounding processes is based on coupling arguments and transport theory. The bounding processes are easy to analyse analytically and numerically and allow us to derive properties on the initial continuous-time Markov chain. We focus on two important properties: the behavior of the process at infinity through the existence of a stationary distribution and the error in truncating the state space to numerically solve the master equation describing the time evolution of the probability distribution of the process. We derive explicit formulas for constructing the optimal bounding processes for a given partition, making the method easy to use in practice. We finally discuss the importance of the choice of the partition to obtain relevant results and illustrate the method on an example chemical reaction network.\\

\textbf{Mathematics Subject Classification: } 60J27 Continuous-time Markov processes on discrete state spaces, 60G10 Stationary stochastic processes, 92C40 Biochemistry, molecular biology\\

\section{Introduction}

Stochastic reaction networks have been widely used in many fields such as chemistry \cite{koeppl_continuous_2011}, biology \cite{elowitz_stochastic_2002,allen_introduction_2011} and ecology, medicine \cite{briggs_introduction_1998}, epidemiology \cite{anderson_infectious_1992}, economics \cite{bai_conditional_2011} to model complex and often multi-dimensional random phenomena. Mathematically, it is straightforward to represent such models as continuous-time Markov chains (CTMCs) but the multi-dimensional nature of the real process greatly complicates the study of properties of the corresponding CTMC (such as explosion in finite time and the behaviour when time tends to infinity) and the numerical simulation of trajectories. For example, the existence of a stationary distribution is a crucial property in the study of the long-term behaviour but whether or not a given CTMC admits a stationary distribution is often hard to say \textit{a priori}. This question has been studied extensively and several methods have been developed to address it: coupling techniques \cite{lindvall_lectures_2002}, conditions on the semi-group or the study of the generator spectrum \cite{anderson_continuous-time_1991,davies_linear_2007}, extension of the Perron-Frobenius theorem \cite{seneta_non-negative_1981} or existence of a Lyapunov function \cite{meyn_stability_1993,champagnat_general_2023}.  However, the size of the state space and the assumptions to be verified make these methods difficult to apply systematically to multidimensional CTMCs.

From a numerical point of view, the time evolution of the probability distribution of a CTMC is described by a master equation (also called Kolmogorov equation) composed of an infinite system of ordinary differential equations. The size of the state space often makes solving the master equation numerically impossible due to the exponential increase in computation time with the state space dimension. Two classes of methods have been developed in the literature to circumvent this problem: Monte Carlo methods and approximation methods.

Monte Carlo methods \cite{gillespie_exact_1977} are based on the simulation of a large number of trajectories of the process with the stochastic simulation algorithm (SSA) \cite{gillespie_exact_1977} also known as the Gillespie algorithm in order to reconstruct the probability distribution of the CTMC. Various improvements have been developed over the past few years to improve the precision and computation speed of the method \cite{gillespie_approximate_2001,albi_efficient_2022}. However, a large number of trajectories of the process is required to obtain good approximations of the probability distribution, which is very costly in terms of computing time.

The second type of methods aims to solve an approximation of the master equation \cite{andreychenko_distribution_2017,kuntz_stationary_2019,einkemmer_hierarchical_2024}. One of the most commonly used methods is the truncation of the state space to replace numerical solving of the master equation by that of a linear system of ordinary differential equations \cite{munsky_finite_2006,wolf_solving_2010} but in the case of a multi-dimensional CTMC, even after truncation the state space is often still much too large for numerical solution of this linear system to be efficient or even just feasible. Moreover, most of these methods do not provide an estimate of the error caused by this approximation that can be calculated in practice.

In this work, we present a general method for constructing one-dimensional bounding processes (in a certain sense) for any CTMC under some simple assumptions on the infinitesimal generator. We can then analyse the one-dimensional bounding processes to derive properties of the multi-dimensional original CTMC that would be hard to obtain directly. Concretely, we focus on two essential aspects when studying CTMC models: the behavior of the process at infinity through the existence of a stationary distribution and the quantification of the approximation error when the infinite state space is truncated. The method for constructing the bounding processes is based on an ordered partition of the state space (that we will refer to as classes) that allows us to reduce the dimension of the CTMC by studying its behavior only on classes. Coupling arguments and transport theory are used to derive these bounding results (upper and lower bounds).

Different methods have been developed in order to reduce the state space of a CTMC to be able to derive properties on this CTMC: aggregation of states (also called lumping) for Markov chain on finite state space \cite{barreto_lumping_2011} or for some particular cases of Markov chain on an infinite state space \cite{rubino_finite_1993,ganguly_markov_2014,lu_chemical_2022}, decomposition of the reaction network into subnetworks \cite{gupta_computational_2018} or comparisons between processes through coupling \cite{fugger_majority_2024}.

To make our framework useful in practice, we derive explicit formulas for constructing the infinitesimal generator of the optimal bounding processes from assumptions on the generator of the original process. These explicit formulas are used to illustrate on an example the results that we can derive on the multi-dimensional original CTMC. The choice of optimal classes to partition the state space (i.e. leading to the best possible bounds) is also discussed by comparing the upper-bounding process and properties derived from a naive choice and those derived from a set of classes taking into account the structure of the reaction network.

The manuscript is organized as follows. \Cref{sec:upper} focuses on the construction of an upper-bounding process. The construction is done in two main steps. In Theorem \ref{thm:main_upper-bound}, we construct an upper-bounding process based on an infinitesimal generator satisfying some monotonicity properties. We provide in Proposition \ref{prop:optimal_upper} an explicit formula for such an infinitesimal generator that most closely upper-bounds the initial process.
The adaptation of these results for a lower-bounding process is presented in \Cref{sec:lower}. In \Cref{sec:appli}, the method is used to derive results on the existence of a stationary distribution and on the error made in numerically solving the master equation on truncated state spaces and all these results are illustrated on an example.

\paragraph{Notation}

We fix a countable state space $\mathcal{S}$ and a partition $(S_{\ell})_{\ell \in \mathbb{N}}$ of $\mathcal{S}$, i.e., for all $\ell \in \mathbb{N}$, $S_{\ell} \subset \mathcal{S}$ and
\begin{equation*}
\mathcal{S}= \bigcup_{\ell \in \mathbb{N}} S_{\ell},
\quad \text{and} \quad
S_{\ell} \cap S_m = \emptyset,
\quad \forall \ell \neq m.
\end{equation*}
In the following, we will assume than \textbf{for all $\ell \in \mathbb{N}$, the set $S_{\ell}$ is finite}.

\begin{defi}\label{def:fct_cl}
For any $i \in \mathcal{S}$, we will call ``class'' of $i$ the unique integer $\ell \in \mathbb{N}$ such that $i \in S_{\ell}$. We denote it by $\cl{i}$.
\end{defi}

\begin{defi}\label{def:upper_lower_processes}
Let $X= (X_t)_{t \geq 0}$ and $Y= (Y_t)_{t \geq 0}$ be two Markov chains on $\mathcal{S}$ and $\mathbb{N}$, respectively. We say that $Y$ is an upper-bounding process (of $X$) if
\begin{equation*}
Y_t \geq \cl{X_t}, \quad
\forall t \geq 0, \text{ almost surely.}
\end{equation*}
We say that $Y$ is a lower-bounding process (of $X$) if
\begin{equation*}
Y_t \leq \cl{X_t}, \quad
\forall t \geq 0, \text{ almost surely.}
\end{equation*}
\end{defi}

\vspace{0.3cm}

As we will consider Markov chains on $\mathcal{S}$, $\mathbb{N}$, and $\mathcal{S} \times \mathbb{N}$, we will make use of the following convention:
\begin{align*}
& \text{the letters $i$ and $j$ denote states in $\mathcal{S}$,} \\
& \text{the letters $k$, $\ell$, and $m$ denote states in $\mathbb{N}$.}
\end{align*}

In the article, we will abusively call \emph{matrix} any real-valued mapping with $2$ or more input variables, taking values in a countable set. For any countable set $\Sigma$, we denote as $\trmat{\Sigma}$ the set of matrices $Q= (Q_{i,j})_{i,j \in \Sigma}$ such that
\begin{itemize}
\item for all $i \in \Sigma$, for all $j \in \Sigma \backslash \{ i \}$, $Q_{i,j} \geq 0$,
\item for all $i \in \Sigma$, $\sum\limits_{j \in \Sigma \backslash \{ i \}} Q_{i,j} < \infty$,
\item for all $i \in \Sigma$, $\sum\limits_{j \in \Sigma} Q_{i,j}= 0$.
\end{itemize}
We will call such matrices \emph{transition-rate matrices}. Note that we allow the matrix $Q$ to have an infinite number of non-zero values. In the case of reaction networks, this generality allows us to consider an infinite number of reactions. Some special type of reaction networks are composed of an infinite number of reactions (such as polymerization reactions \cite{coleman_multistate_1963}). However, most reaction networks are composed of a finite number of reactions but may be composed of non-standard reactions that allow for transitions to infinitely many different states (such as the production of proteins in bursts with stochastic burst sizes) \cite{shahrezaei_analytical_2008}.

In the following, the transition-rate matrix of the Markov chain of interest will be denoted by $Q$. The aim of this article is to construct the transition-rate matrix of the upper- and lower-bounding processes that will be denoted by $\widetilde{Q}= (\widetilde{Q}_{\ell,m})_{\ell,m \in \mathbb{N}} \in \trmat{\mathbb{N}}$.
We will make use of the following notations for $Q$ and $\widetilde{Q}$:
\begin{equation} \label{eq:convention1}
\begin{array}{rll}
Q_{i,\ell} = & \! \! {\displaystyle \sum_{j \in S_\ell}} Q_{i,j}, \quad & \forall i \in \mathcal{S},\, \forall \ell \in \mathbb{N}, \\
\widetilde{Q}_{\ell,0:m}
= & \! \! {\displaystyle \sum_{k=0}^m \widetilde{Q}_{\ell,k}, } \quad & \forall \ell, m \in \mathbb{N},\\
\widetilde{Q}_{\ell,m:\infty} = & \! \! {\displaystyle \sum_{k= m}^{\infty} \widetilde{Q}_{\ell,k}, }  \quad & \forall \ell, m \in \mathbb{N}.
\end{array}
\end{equation}
These notations will also be used for general matrices (which are not necessarily transition-rate matrices).
For example, we will use the notation
\begin{equation*}
Q_{i,0:m}= \sum_{k=0}^m Q_{i,k} = \sum_{k=0}^m \sum_{j \in S_k} Q_{i,j}, \quad \forall i \in \mathcal{S},\, \forall m \in \mathbb{N}.
\end{equation*}
Note that for all $\hat{Q} \in \trmat{\Sigma}$, for all $(\ell,m) \in \Sigma^2$, $\hat{Q}_{\ell,0:m} = - \hat{Q}_{\ell,m+1:\infty}$.\\

Finally, we recall the master equation governing the time evolution of the probability distribution of a continuous-time Markov chain $X$ on $\mathcal{S}$ with transition-rate matrix $Q$:
$$ \frac{\d p}{\d t}(t,j) = \sum_{i \in \mathcal{S}} Q_{i,j}p(t,j), $$
where $p(t,i) = \mathbb{P}(X_t = i)$.
This equation can be formulated in a vector form:
$$ \frac{\d p}{\d t}(t) = p(t) Q, $$
where $p(t) = (p(t,i))_{i \in \mathcal{S}}$ is a row vector.

\section{Construction of an upper-bounding process}\label{sec:upper}

\subsection{Main result}

We state here our first main result, which gives two simple conditions on the matrix $\widetilde{Q}$ of the process $Y$ for which $Y$ is an upper-bounding process of $X$.

\begin{thm} \label{thm:main_upper-bound}
Let $Q \in \trmat{\mathcal{S}}$ and let $\widetilde{Q} \in \trmat{\mathbb{N}}$. Let $p^0$ be a distribution on $S$. Assume the two following properties:
\begin{itemize}
\item[] \begin{enumerate}[label = \textbf{\upshape (A\arabic*)}, ref = \textbf{\upshape A\arabic*}
]
\item \label{assum:second_index}
$\forall \ell$, $m \in \mathbb{N}$,~ $\forall i \in S_{\ell}$, ~$\widetilde{Q}_{\ell,0:m} \leq Q_{i,0:m}$,
\item \label{assum:first_index}
$\forall \ell$, $m \in \mathbb{N}$ such that $\ell \neq m$,~ $\widetilde{Q}_{\ell+1,0:m} \leq \widetilde{Q}_{\ell,0:m}$.
\end{enumerate}
\end{itemize}
Then there exist two continuous-time Markov chains $X=(X_t)_{t \geq 0}$ and $Y= (Y_t)_{t \geq 0}$, defined on $\mathcal{S}$ and $\mathbb{N}$ respectively, with transition-rate matrices $Q$ and $\widetilde{Q}$, such that $X_0$ is distributed according to $p^0$, and such that $Y$ is an upper-bounding process of $X$.
\end{thm}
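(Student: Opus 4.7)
The plan is to build a coupling, i.e., a single continuous-time Markov chain $(X_t, Y_t)_{t \geq 0}$ on the product space $\mathcal{S} \times \mathbb{N}$ whose two coordinates are Markov chains with the prescribed generators $Q$ and $\widetilde{Q}$, and whose trajectories almost surely stay in the ``order set''
\begin{equation*}
\Delta := \{(i, \ell) \in \mathcal{S} \times \mathbb{N} : \cl{i} \leq \ell\}.
\end{equation*}
Started from any initial law supported on $\Delta$, in particular $X_0 \sim p^0$ and $Y_0 = \cl{X_0}$, such a chain gives the desired upper-bounding conclusion directly from \Cref{def:upper_lower_processes}.

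To realize this program I would exhibit a transition-rate matrix $R \in \trmat{\mathcal{S} \times \mathbb{N}}$ satisfying, at every $(i, \ell) \in \Delta$, the two marginal identities
\begin{equation*}
\sum_{m \in \mathbb{N}} R_{(i,\ell),(j,m)} = Q_{i,j} \text{ for } j \neq i, \qquad \sum_{j \in \mathcal{S}} R_{(i,\ell),(j,m)} = \widetilde{Q}_{\ell,m} \text{ for } m \neq \ell,
\end{equation*}
together with the invariance condition $R_{(i,\ell),(j,m)} = 0$ whenever $\cl{j} > m$. Given such an $R$, the standard minimal-chain construction produces a CTMC on $\mathcal{S}\times\mathbb{N}$ whose two coordinates have the announced marginal dynamics and which, by the invariance condition, never leaves $\Delta$.

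The pointwise construction of $R$ splits into two ingredients. At a \emph{tied} state $(i, \ell)$ with $\cl{i} = \ell$, hypothesis \ref{assum:second_index} (rewritten via the identity $\widetilde{Q}_{\ell,0:m} = -\widetilde{Q}_{\ell,m+1:\infty}$) is exactly the stochastic-dominance condition between the $Y$-jump measure at $\ell$ and the pushforward by $\cl{\cdot}$ of the $X$-jump measure at $i$. A Strassen-type argument, or equivalently an inverse-CDF transport, then produces an order-preserving joint distribution that I read off as the off-diagonal entries of $R$. At an \emph{untied} state $(i, \ell)$ with $\cl{i} < \ell$, hypothesis \ref{assum:first_index} provides the stochastic monotonicity of $\widetilde{Q}$ in its first index that allows the coupling already built at $(i, \cl{i})$ to be shifted upward in the second coordinate while preserving the order and the marginals.

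The main obstacle is the careful bookkeeping of the diagonal and self-transition entries: the quantities $R_{(i,\ell),(i,m)}$ with $m \neq \ell$ (describing $Y$-jumps while $X$ stays put) and $R_{(i,\ell),(j,\ell)}$ with $j \neq i$ (the opposite) must absorb the mismatch between the total jump rates $-Q_{i,i}$ and $-\widetilde{Q}_{\ell,\ell}$, stay non-negative, and yield finite row sums so that $R \in \trmat{\mathcal{S} \times \mathbb{N}}$. A secondary technical point is that the Strassen-type coupling must be phrased for positive measures of possibly different total masses on a countable (not finite) space, which is routine but requires some care in the limit and in exploiting the finiteness of each $S_\ell$.
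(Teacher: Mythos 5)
Your proposal follows essentially the same route as the paper: construct a product-space generator $R$ with the two marginal identities and support in the order set, obtain the off-diagonal entries from a Strassen/inverse-CDF monotone transport whose applicability is exactly what \ref{assum:second_index} gives at tied states and \ref{assum:first_index} propagates to untied ones, and absorb the mismatch of total jump rates $-Q_{i,i}$ and $-\widetilde{Q}_{\ell,\ell}$ by a diagonal translation. The paper implements precisely these steps (Lemmas \ref{lem:cond:R}--\ref{lem:existence:R''} and Proposition \ref{prop:transportation}), with the additional bookkeeping of first building the coupling at the level of classes and then redistributing it proportionally over the states within each class.
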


Subsections \ref{subsec:transport} and \ref{subsec:proof} are dedicated to the proof of the theorem. The main idea of our analysis consists in constructing $(X_t,Y_t)_{t\geq 0}$ as a Markov chain on $\mathcal{S} \times \mathbb{N}$ with a transition-rate matrix $R$ satisfying certain properties (see Lemma \ref{lem:cond:R}). In particular, $R$ will be such that no transition towards a state $(i,\ell)$ such that $\cl{i} > \ell$ is possible. We will provide the reader with an explicit construction of $R$, based on a technique from transport theory (see Proposition \ref{prop:transportation}). Subsection \ref{subsec:construction_Qtilde} will be dedicated to the construction of an explicit transition-rate matrix $\widetilde{Q}$ satisfying Assumptions \ref{assum:second_index} and \ref{assum:first_index}.

We give an interpretation of the two standing assumptions in the next remarks. Note that when $m \geq \ell$, the term $\widetilde{Q}_{\ell,0:m}$ has no straightforward interpretation, as the underlying sum contains the diagonal negative term $\widetilde{Q}_{\ell,\ell}$. So in this case, we reformulate our assumptions with the help of the equality $\widetilde{Q}_{\ell,m+1:\infty}= -\widetilde{Q}_{\ell,0:m}$.

\begin{rem} \label{rem:assumptionA1}
Assumption \ref{assum:second_index} essentially says that $Y$ decreases at a slower rate than $X$ moves from states to other states that belong to a lower class. Vice-versa, $Y$ increases at a faster rate than $X$ moves from states to other states that belong to a higher class. This can be formalized as follows.
\begin{itemize}
\item When $m < \ell$, the assumption is equivalent to:
\begin{equation*}
\widetilde{Q}_{\ell,0:m}
\leq \min_{i \in S_\ell} \, Q_{i,0:m}.
\end{equation*}
The quantity $\widetilde{Q}_{\ell,0:m}$ is the rate at which $Y$ moves from $\ell$ to classes that are smaller or equal to $m$ and the quantity $\min_{i \in S_\ell} \, Q_{i,0:m}$ is the minimal rate at which $X$ transitions from a state in class $\ell$ to another state in a class lower or equal to $m$.
\item When $m \geq \ell$, the assumption is equivalent to:
\begin{equation*}
\widetilde{Q}_{\ell,m+1:\infty}
\geq \max_{i \in S_\ell}\, Q_{i,m+1:\infty}.
\end{equation*}
The quantity $\widetilde{Q}_{\ell,m+1:\infty}$ is the rate at which $Y$ moves from $\ell$ to classes that are strictly greater than $m$ and the quantity $\max_{i \in S_\ell}\, Q_{i,m+1:\infty}$ is the maximal rate at which $X$ transitions from a state in class $\ell$ to another state in a class strictly higher than $m$.
\end{itemize}
Assumption \ref{assum:second_index} preserves the order $\cl{X} \leq Y$ when the two processes $X$ and $Y$ are in the same class.
\end{rem}

\begin{rem} \label{rem:assumptionA2}
Assumption \ref{assum:first_index} essentially says that $Y$ moves to low classes at a faster rate when the initial state is small, and vice-versa: Y moves to larger classes at a faster rate when the initial state is large. This can be formalized as follows.
\begin{itemize}
\item When $m< \ell$, the assumption is equivalent to:
\begin{equation*}
\widetilde{Q}_{\ell+1,0:m} \leq \widetilde{Q}_{\ell,0:m}.
\end{equation*}
The quantities $\widetilde{Q}_{\ell+1,0:m}$ and $\widetilde{Q}_{\ell,0:m}$ describes the rate at which $Y$ moves from class $\ell+1$, respectively from class $\ell$, to classes that are lower or equal to $m$.
\item When $m > \ell$, the assumption is equivalent to
\begin{equation*}
\widetilde{Q}_{\ell+1,m+1:\infty} \geq \widetilde{Q}_{\ell,m+1:\infty}.
\end{equation*}
The quantities $\widetilde{Q}_{\ell+1,m+1:\infty}$ and $\widetilde{Q}_{\ell,m+1:\infty}$ describe the rates at which $Y$ moves from class $\ell+1$, respectively from class $\ell$, to classes that are strictly greater than $m$.
\end{itemize}
Assumption \ref{assum:first_index} ensures that even when processes $X$ and $Y$ are not in the same class, the order $\cl{X} \leq Y$ is preserved. Indeed, $Y$ jumps multiple classes with a rate decreasing with respect to the number of skipped classes, preventing the process $Y$ to be smaller than the class of $X$ even when the two processes are far apart.
\end{rem}

\subsection{A preliminary result}
\label{subsec:transport}

We denote by $\seq{}$ the set of sequences $u=(u_k)_{k \in \mathbb{N}}$
of non-negative real numbers whose sum is finite. Following the convention introduced in the introduction, we denote $u_{0:\infty}= \sum_{k=0}^\infty u_k$ for a given $u \in \seq{}$.
Similarly, we define $\seqseq{}$ as the set of doubly-indexed sequences $\pi= (\pi_{k,\ell})_{(k,\ell) \in \mathbb{N}^2}$ of non-negative numbers whose sum is finite.
Given $\pi \in \seqseq{}$, $k \in \mathbb{N}$, and $\ell \in \mathbb{N}$, we denote $\pi_{k,0:\infty}= \sum\limits_{\ell=0}^\infty \pi_{k,\ell}$ and
$\pi_{0:\infty,\ell} = \sum\limits_{k=0}^\infty \pi_{k,\ell}$.

\begin{prop} \label{prop:transportation}
Let $a$ and $b$ be in $\seq{}$. Assume that $a_{0:k} \geq b_{0:k}$, for all $k \in \mathbb{N}$. Assume further that $a_{0:\infty}= b_{0:\infty}$. Then there exists $\pi \in \seqseq{}$ satisfying the following properties:
\begin{equation}
\label{eq:transport}
\pi_{k,0:\infty}= a_k, \quad
\pi_{0:\infty,\ell}= b_\ell, \quad
k> \ell \Rightarrow \pi_{k,\ell}= 0,
\quad
\forall (k,\ell) \in \mathbb{N}^2.
\end{equation}
\end{prop}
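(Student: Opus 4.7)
The plan is to construct $\pi$ via the classical quantile (or Hoeffding--Fréchet) coupling, which in this one-dimensional setting admits a fully explicit realization on the line. Set $T := a_{0:\infty} = b_{0:\infty}$ and write $A(k) := a_{0:k}$, $B(\ell) := b_{0:\ell}$, with the convention $A(-1) = B(-1) = 0$. Consider the two partitions of the half-open interval $[0,T)$ by the blocks
$$ I_k := [A(k-1), A(k)) \quad \text{and} \quad J_\ell := [B(\ell-1), B(\ell)), \qquad k, \ell \in \mathbb{N}. $$
I would then define
$$ \pi_{k,\ell} := \lambda\bigl( I_k \cap J_\ell \bigr), $$
where $\lambda$ denotes Lebesgue measure on $\mathbb{R}$.

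The two marginal identities in \eqref{eq:transport} are immediate: since $\{J_\ell\}_{\ell \in \mathbb{N}}$ partitions $[0,T)$, summing over $\ell$ gives $\pi_{k,0:\infty} = \lambda(I_k) = a_k$, and symmetrically $\pi_{0:\infty,\ell} = \lambda(J_\ell) = b_\ell$. These identities also yield that $\pi \in \seqseq$ with total mass $T$. For the support condition, assume $k > \ell$, so that $k-1 \geq \ell$. The dominance hypothesis $a_{0:\ell} \geq b_{0:\ell}$ gives
$$ A(k-1) \geq A(\ell) \geq B(\ell), $$
hence $I_k \subset [B(\ell), T)$ is disjoint from $J_\ell \subset [0, B(\ell))$, and therefore $\pi_{k,\ell} = 0$.

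I do not anticipate a substantive obstacle: the construction is essentially forced by the stochastic-dominance hypothesis $a_{0:k} \geq b_{0:k}$, which is exactly what guarantees that the block boundaries of $A$ run ahead of those of $B$, preventing any transport from index $k$ down to a strictly smaller index $\ell$. Degenerate cases (e.g.\ some $a_k = 0$, some $b_\ell = 0$, or $T = 0$) require no separate treatment, since the corresponding blocks $I_k$ or $J_\ell$ are simply empty and contribute nothing to the sums.
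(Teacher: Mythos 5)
Your construction is correct, and in fact it produces \emph{exactly the same matrix} as the paper's: the paper's allocation map satisfies $\Pi[x,b]_\ell=\min(\max(x-b_{0:\ell-1},0),b_\ell)=\lambda\bigl([0,x)\cap J_\ell\bigr)$, so its $\bar{\Pi}[a,b]_{k,\ell}=\Pi[a_{0:k},b]_\ell-\Pi[a_{0:k-1},b]_\ell$ is precisely your $\lambda(I_k\cap J_\ell)$. What differs is the route of verification. The paper works algebraically with the min/max formula, first establishing a five-part technical lemma (monotonicity and $1$-Lipschitz continuity of $x\mapsto\Pi[x,u]_k$, the identities $\Pi[u_{0:\infty},u]=u$ and $\Pi[x,u]_{0:\infty}=x$), and then deduces the marginal conditions by telescoping and passing to limits, and the support condition from the saturation property $u_{0:k}\leq x\Rightarrow\Pi[x,u]_k=u_k$. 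Your geometric realization on the interval $[0,T)$ makes all three conditions of \eqref{eq:transport} essentially one-line consequences of countable additivity of Lebesgue measure and of the inclusion $I_k\subset[B(\ell),T)$ when $k>\ell$, so it is shorter and arguably more transparent; the only mild care needed (which you handle implicitly) is that $\{J_\ell\}_\ell$ does cover $[0,T)$ because $B(\ell)\uparrow T$. The paper's more laborious formulation buys something it uses later, though: the explicit sequence-level map $\Pi[x,u]$ and its Lipschitz estimate (item 5 of its lemma) are exactly what is reused in Lemma \ref{lem:existence:R''} and in the verification that $\bar{\Pi}[a,b]$ has nonnegative entries, so the paper's packaging is chosen for downstream convenience rather than for the shortest proof of Proposition \ref{prop:transportation} itself.
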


The rest of the subsection is dedicated to the construction of such a sequence $\pi$. One can understand Proposition \ref{prop:transportation} from the point of view of optimal transportation theory: a commodity is available in sources, labeled by $k= 0,1,\ldots$, and must be shipped to some factories, labeled by $\ell= 0,1,\ldots$. Then $a_k$ represents the amount of commodity available at source $k$, $b_{\ell}$ the capacity of factory $\ell$, and $\pi_{k,\ell}$ is the amount of commodity that is shipped from the source $k$ to the factory $\ell$. The first two conditions in \eqref{eq:transport} simply say that there is no loss of commodity. The third condition says that the commodity should be shipped from a given source to a factory with a greater or equal label.\\

In order to prove \Cref{prop:transportation}, we first consider the problem of shipping an amount $x$ of commodity (from a single source) to factories of capacities $u_0$, $u_1$,... favoring factories of lower indexes. So if $x \leq u_0$, we ship the whole amount to factory 0. Otherwise, we ship the amount $u_0$ to the factory 0. Then the remaining amount $x-u_0$ is entirely shipped to the factory 1 if $x-u_0 \leq u_1$. Otherwise, we ship the amount $u_1$, and so on. This leads to the following mathematical definition. Given $u \in \seq{}$ and $x \in [0,u_{0:\infty}]$, we define the sequence $(\Pi[x,u]_k)_{k \in \mathbb{N}}$ by
\begin{equation*}
\Pi[x,u]_k
=
\min(\max(x-u_{0:k-1},0) ,u_k),
\end{equation*}
with the convention that $u_{0:-1}=0$.
Two cases must be considered in order to interpret the formula. If $x \leq u_{0:k-1}$, then the total capacity of the factories $0,\ldots,k-1$ is larger than the amount of commodity, which does not need to be shipped to factory $k$. In this case we indeed have $\Pi[x,u]_k= \min(0,u_k)= 0$. Otherwise, when $x > u_{0:k-1}$, the factories $0,\ldots,k-1$ should be used at their full capacity. The above formula simplifies to $\Pi[x,u]_k= \min(x-u_{0:k-1},u_k)$.
The quantity $x-u_{0:k-1}>0$ represents the remaining amount to be allocated. If this quantity is less then $u_k$, it can be fully allocated to factory $k$, otherwise, we restrict it to $u_k$. 

We have the following technical result.

\begin{lem}\label{lem:easy_transport}
Let $u \in \seq{}$ and let $x \in [0, u_{0:\infty}]$. The five following statements hold true.
\begin{enumerate}
\item{\label{item3}} For all $k \in \mathbb{N}$,~ $\Pi[x,u]_k \leq u_k$.
\item{\label{item4}}  For any $k \in \mathbb{N}$, if $u_{0:k} \leq x$, then $\Pi[x,u]_k = u_k$.
\item{\label{item2}} If $x= u_{0:\infty}$, then $\Pi[x,u]=u$.
\item{\label{item1}} We have $\Pi[x,u] \in \seq{}$ and in particular, $\Pi[x,u]_{0:\infty}= x$.
\item{\label{item5}}  For any $x' \in [0,x]$, for all $k \in \mathbb{N}$, we have
 $$0 \leq \Pi[x,u]_k - \Pi[x',u]_k \leq x-x'.$$
\end{enumerate}
\end{lem}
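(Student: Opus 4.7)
The plan is to prove the five items by directly unfolding the definition $\Pi[x,u]_k = \min(\max(x - u_{0:k-1}, 0), u_k)$ and exploiting the piecewise-linear structure of the $\min$ and $\max$ operations. Items 1, 2, and 3 are essentially immediate. Item 1 holds because the outer $\min$ caps the value at $u_k$. For item 2, the hypothesis $u_{0:k} \leq x$ rewrites as $x - u_{0:k-1} \geq u_k \geq 0$, so the $\max$ equals $x - u_{0:k-1}$ and the subsequent $\min$ collapses to $u_k$. Item 3 is then the special case of item 2 applied simultaneously to every $k$, since $x = u_{0:\infty}$ forces $u_{0:k} \leq x$ for all $k$.

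The main bookkeeping occurs in item 4. I would introduce the threshold index $K = \inf\{k \in \mathbb{N} : u_{0:k} > x\} \in \mathbb{N} \cup \{\infty\}$. If $K = \infty$, then $x = u_{0:\infty}$ and item 3 already yields $\Pi[x,u]_{0:\infty} = u_{0:\infty} = x$. Otherwise, I split into three regimes: for $k < K$, item 2 gives $\Pi[x,u]_k = u_k$; for $k = K$, the defining inequalities $u_{0:K-1} \leq x < u_{0:K}$ yield $0 \leq x - u_{0:K-1} < u_K$, so $\Pi[x,u]_K = x - u_{0:K-1}$; for $k > K$, we have $x - u_{0:k-1} \leq x - u_{0:K} < 0$, so the $\max$ vanishes and $\Pi[x,u]_k = 0$. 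Summing over the three regimes gives $u_{0:K-1} + (x - u_{0:K-1}) + 0 = x$, which simultaneously proves $\Pi[x,u] \in \seq{}$ and identifies the total sum.

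For item 5, I would use that the maps $t \mapsto \max(t, 0)$ and $t \mapsto \min(t, u_k)$ are both non-decreasing and $1$-Lipschitz, so their composition $x \mapsto \Pi[x,u]_k$ inherits these two properties. Monotonicity then gives the lower bound $0 \leq \Pi[x,u]_k - \Pi[x',u]_k$ when $x' \leq x$, while the Lipschitz property gives the upper bound by $x - x'$. I do not expect a genuine obstacle here; the one subtlety is ensuring the case analysis in item 4 is exhaustive, in particular that $K = \infty$ is handled consistently with item 3 and that the boundary case $k = K$ is treated separately from the two regimes on either side. Packaging item 5 through Lipschitz continuity (rather than a direct case analysis on which branch of $\min/\max$ is active at $x$ and $x'$) is what keeps the argument short.
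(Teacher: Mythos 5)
Your proposal is correct and follows essentially the same route as the paper: items 1--3 by direct unfolding of the $\min/\max$ definition, item 4 via the threshold index $K$ with the three regimes $k<K$, $k=K$, $k>K$ (the paper phrases $K$ through the inequalities $u_{0:K-1} < x \leq u_{0:K}$ rather than as an infimum, but the case analysis is identical), and item 5 by writing $\Pi[x,u]_k$ as a composition of non-decreasing $1$-Lipschitz maps. No substantive difference.
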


\begin{proof}
For convenience, we set $v= \Pi[x,u]$.
\begin{enumerate}
\item Straightforward.
\item Let $k \in \mathbb{N}$ be such that $u_{0:k} \leq x$. Then we have $x-u_{0:k-1} = x-u_{0:k} + u_k \geq u_k \geq 0$. Therefore $\Pi[x,u]_k= \min(x-u_{0:k-1},u_k)= u_k$ as was to be shown.
\item If $x= u_{0:\infty}$, then we have $u_{0:k} \leq x$ for all $k$, and therefore $\Pi[x,u]_k= u_k$ (by the previous point).
\item From the definition of $v$, we have $v_k \geq 0$. We only have to prove that $v_{0:\infty}=x$. If $x= u_{0:\infty}$, we have $v= u$ and the claim holds true. Otherwise, if $x < u_{0:\infty}$, there exists $K \in \mathbb{N}$ such that
\begin{equation*}
u_{0:K-1} < x \leq u_{0:K}.
\end{equation*}
It is easy to verify that for $k=0,\ldots, K-1$, $v_k= u_k$, that for $k= K$, $v_K= x-u_{0:K-1}$ and that for $k>K$, $v_k=0$. The claim follows.
\item Let $k \in \mathbb{N}$. Then $\Pi[x,u]_k$ results from the composition of two functions, that is, $\Pi[x,u]_k= \alpha_k \circ \beta_k(x)$, where
\begin{equation*}
\beta_k(x)= \max(x-u_{0:k-1},0)
\quad \text{and} \quad
\alpha_k(y)= \min(y,u_k).
\end{equation*}
Both functions $\alpha_k$ and $\beta_k$ are non-decreasing and Lipschitz continuous with modulus 1. Therefore, the composition $\alpha_k \circ \beta_k$ is also non-decreasing and Lipschitz continuous with modulus 1. The claim immediately follows.
\end{enumerate}
\end{proof}

Given $a$ and $b$ in $\seq{}$ satisfying the assumptions of Proposition \ref{prop:transportation}, we define $\bar{\Pi}[a,b] \colon (k,\ell) \in \mathbb{N}^2 \rightarrow \bar{\Pi}[a,b]_{k,\ell} \in \mathbb{R}$ as
\begin{equation*}
\bar{\Pi}[a,b]_{k,\ell}
= \Pi[a_{0:k},b]_\ell
- \Pi[a_{0:k-1},b]_{\ell}, \quad \forall (k,\ell) \in \mathbb{N}^2,
\end{equation*}
using again the convention $a_{0:-1}= 0$.
Since the sequences $a$ and $b$ satisfy the assumptions of Proposition \ref{prop:transportation}, for any $k \in \mathbb{N}$, $a_{0:k} \leq b_{0:\infty}$. Thus for any $k$, the two terms $\Pi[a_{0:k},b]$ and $\Pi[a_{0:k-1},b]$ are well-defined.

\begin{lem}
Let $a$ and $b$ satisfy the assumptions of Proposition \ref{prop:transportation}. Then $\bar{\Pi}[a,b]$ lies in $\seqseq{}$ and satisfies \eqref{eq:transport}.
\end{lem}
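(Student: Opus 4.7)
The plan is to verify each of the three conditions in~\eqref{eq:transport} separately, exploiting the fact that $\bar{\Pi}[a,b]_{k,\ell}$ is defined as a difference so that summing in $k$ telescopes.

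First I would establish non-negativity: since $a_{0:k-1} \leq a_{0:k}$, point~\ref{item5} of \Cref{lem:easy_transport} (applied with $x= a_{0:k}$ and $x'= a_{0:k-1}$) gives
\begin{equation*}
0 \;\leq\; \bar{\Pi}[a,b]_{k,\ell} \;\leq\; a_{0:k} - a_{0:k-1} \;=\; a_k.
\end{equation*}
Then for the first marginal condition $\pi_{k,0:\infty}= a_k$, I sum over $\ell$ and apply point~\ref{item1} twice to obtain $\Pi[a_{0:k},b]_{0:\infty} - \Pi[a_{0:k-1},b]_{0:\infty} = a_{0:k} - a_{0:k-1} = a_k$ (interchanging the sums is justified by non-negativity). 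The $\ell^1$ claim $\bar{\Pi}[a,b] \in \seqseq{}$ then follows immediately, since summing the first marginal gives $\sum_k a_k = a_{0:\infty} < \infty$.

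Next I would verify the third (triangular support) condition. Suppose $k > \ell$, i.e., $k-1 \geq \ell$. Then the hypothesis $a_{0:k-1} \geq b_{0:k-1} \geq b_{0:\ell}$ allows me to apply point~\ref{item4} with $u=b$ to get $\Pi[a_{0:k-1},b]_\ell = b_\ell$; the same argument applied at $a_{0:k}$ yields $\Pi[a_{0:k},b]_\ell = b_\ell$, so the difference vanishes.

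The main step is the second marginal $\pi_{0:\infty,\ell}= b_\ell$, which I would prove via the telescoping identity
\begin{equation*}
\sum_{k=0}^{N} \bar{\Pi}[a,b]_{k,\ell}
\;=\; \Pi[a_{0:N},b]_{\ell} - \Pi[0,b]_{\ell}
\;=\; \Pi[a_{0:N},b]_{\ell},
\end{equation*}
noting that $\Pi[0,b]_\ell = 0$ from the defining formula. The one delicate point is passing to the limit $N\to\infty$: point~\ref{item5} shows that $x \mapsto \Pi[x,b]_\ell$ is $1$-Lipschitz, so as $a_{0:N} \to a_{0:\infty}= b_{0:\infty}$, we get $\Pi[a_{0:N},b]_\ell \to \Pi[b_{0:\infty},b]_\ell$, which equals $b_\ell$ by point~\ref{item2}. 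This limiting argument—using Lipschitz continuity combined with the equality $a_{0:\infty}= b_{0:\infty}$—is the only non-routine piece; everything else reduces to bookkeeping on the constituents of \Cref{lem:easy_transport}.
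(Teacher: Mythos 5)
Your proposal is correct and follows essentially the same route as the paper's proof: non-negativity and the bound $\pi_{k,\ell}\leq a_k$ from point~\ref{item5}, the first marginal by summing in $\ell$ via point~\ref{item1}, the support condition from point~\ref{item4} using $b_{0:\ell}\leq a_{0:k-1}\leq a_{0:k}$, and the second marginal by telescoping in $k$ and passing to the limit through the $1$-Lipschitz continuity of $x\mapsto\Pi[x,b]_\ell$ together with $a_{0:\infty}=b_{0:\infty}$ and point~\ref{item2}. No gaps.
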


\begin{proof}
Let us define $\Lambda \colon (k,\ell) \in (\{ -1 \} \cup \mathbb{N}) \times \mathbb{N}
\mapsto \Lambda_{k,\ell} \in \mathbb{R}$ as
\begin{equation*}
\Lambda_{k,\ell}= \Pi[a_{0:k},b]_{\ell}.
\end{equation*}
For convenience, we set $\pi= \bar{\Pi}[a,b]$. We have $\pi_{k,\ell}= \Lambda_{k,\ell}- \Lambda_{k-1,\ell}$ for all $(k,\ell) \in \mathbb{N}^2$. First we verify that $\pi \in \seqseq{}$. As $a_{0:k} \geq a_{0:k-1}$, for any $k$, we have that $\Lambda_{k,\ell} \geq \Lambda_{k-1,\ell}$, by Lemma \ref{lem:easy_transport}.\ref{item5}. Thus $\pi_{k,\ell} \geq 0$.
Note that if $\pi$ satisfies \eqref{eq:transport} then $\pi$ has finite sum, so it only remains to verify that \eqref{eq:transport} holds true.
\begin{itemize}
\item Let $k \in \mathbb{N}$. Using Lemma \ref{lem:easy_transport}.\ref{item1}, we have that for any $\ell \in \mathbb{N}$,
\begin{equation*}
\pi_{k,0:\ell}=
\Lambda_{k,0:\ell}- \Lambda_{k-1,0:\ell} \underset{\ell \to+\infty}{\longrightarrow} a_{0:k}- a_{0:k-1}
= a_k.
\end{equation*}
\item For $k \in \mathbb{N}$,
\begin{equation*}
\pi_{0:k,\ell}
= \Lambda_{k,\ell}- \Lambda_{-1,\ell}
= \Lambda_{k,\ell}
= \Pi[a_{0:k},b]_{\ell}.
\end{equation*}
Using Lemma \ref{lem:easy_transport}.\ref{item5}, the function $x \mapsto \Pi[x,b]_l$ is $1$-Lipschitz so it is continuous. Therefore,
\begin{equation*}
\lim_{k \to +\infty} \pi_{0:k,\ell} = \Pi[a_{0:\infty},b]_{\ell}
= \Pi[b_{0:\infty},b]_\ell
= b_\ell.
\end{equation*}
The last equality follows from Lemma \ref{lem:easy_transport}.\ref{item2}.
\item Let $k > \ell$.
We have
\begin{equation*}
b_{0:\ell} \leq a_{0:\ell}
\leq a_{0:k-1} \leq a_{0:k}.
\end{equation*}
Therefore, by Lemma \ref{lem:easy_transport}.\ref{item4}, we have $\Lambda_{k,\ell}= b_\ell$ and $\Lambda_{k-1,\ell}= b_\ell$ and finally $\pi_{k,\ell}= b_{\ell}- b_{\ell}= 0$.
\end{itemize}
\end{proof}

\subsection{Proof of the main result}
\label{subsec:proof}

We prove in this subsection Theorem \ref{thm:main_upper-bound}. We assume that a transition-rate matrix $\widetilde{Q}$ satisfying Assumptions \ref{assum:second_index} and \ref{assum:first_index} is given (its construction will be investigated in Section \ref{subsec:construction_Qtilde}).

\begin{lem} \label{lem:cond:R}
Assume that there exists a transition-rate matrix $R= (R_{(i,\ell),(j,m)})_{(i,\ell),(j,m) \in \mathcal{S} \times \mathbb{N}}$ satisfying the following properties:
\begin{equation}
\label{eq:condition_R}
\begin{array}{rll}
a) & R_{(i,\ell),(j,0:\infty)} = Q_{i,j},
& \forall i \in \mathcal{S},\ \forall \ell \in \mathbb{N},\ \forall j \in \mathcal{S}, \\
b) & R_{(i,\ell),(0:\infty,m)} = \widetilde{Q}_{\ell,m},
& \forall i \in \mathcal{S}, \ \forall \ell \in \mathbb{N}, \ \forall m \in \mathbb{N}, \\
c) & \cl{i} \leq \ell \text{ and }
\cl{j} > m \Rightarrow R_{(i,\ell),(j,m)} = 0, & \forall (i,\ell) \in \mathcal{S} \times \mathbb{N}, \ \forall (j,m) \in \mathcal{S} \times \mathbb{N}.
\end{array}
\end{equation}
Then for any $\left( X_0, Y_0 \right) \in \mathcal{S} \times \mathbb{N}$ such that $\cl{X_0} \leq Y_0$, the continuous-time Markov chain $Y$ with transition-rate matrix $\tilde{Q}$ starting in $Y_0$ is an upper-bounding process of $X$.
\end{lem}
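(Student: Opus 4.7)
The plan is to realise $(X_t, Y_t)_{t \geq 0}$ as a single coupled continuous-time Markov chain on $\mathcal{S} \times \mathbb{N}$ with transition-rate matrix $R$, started from the given deterministic pair $(X_0, Y_0)$ satisfying $\cl{X_0} \leq Y_0$. From this one coupled process I must extract three facts: the first marginal is a CTMC with generator $Q$ started at $X_0$; the second marginal is a CTMC with generator $\widetilde{Q}$ started at $Y_0$; and the pathwise inequality $\cl{X_t} \leq Y_t$ holds for all $t \geq 0$ almost surely. The first two facts give the claim that $Y$ is (in distribution) the CTMC whose existence is asserted in the lemma, and the third is exactly the defining property of an upper-bounding process.

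For the identification of the marginals, the key observation is that conditions (a) and (b) in \eqref{eq:condition_R} state exactly that the total outgoing rate of each coordinate does not depend on the current value of the other coordinate: from a state $(i,\ell)$, the total rate at which the first coordinate moves to some $j \neq i$ equals $\sum_{m \in \mathbb{N}} R_{(i,\ell),(j,m)} = Q_{i,j}$, which is independent of $\ell$, and symmetrically for the second coordinate using (b). A standard lumping (marginalisation) argument for CTMCs then yields that the first marginal of the coupled chain is itself a Markov process with transition-rate matrix $Q$, and the second marginal a Markov process with transition-rate matrix $\widetilde{Q}$.

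The ordering is obtained by a forward-invariance argument. Define
\begin{equation*}
D := \bigl\{ (i,\ell) \in \mathcal{S} \times \mathbb{N} : \cl{i} \leq \ell \bigr\}.
\end{equation*}
Condition (c) reads precisely as: from any $(i,\ell) \in D$, every off-diagonal transition $(i,\ell) \to (j,m)$ with positive rate $R_{(i,\ell),(j,m)}$ leads to a state $(j,m)$ with $\cl{j} \leq m$, that is, to a state in $D$. Equivalently, the total jump rate from $D$ into $D^c$ is zero, so $D$ is forward-invariant under the coupled dynamics. Since $(X_0, Y_0) \in D$ by hypothesis, an immediate induction on the successive jump times gives $(X_t, Y_t) \in D$ for all $t \geq 0$ almost surely, which is exactly $\cl{X_t} \leq Y_t$.

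The one point that I expect to require care, and which is the main technical obstacle, is the global existence of the coupled process on the infinite state space $\mathcal{S} \times \mathbb{N}$: in principle $R$ could admit several extensions, or be explosive. I would construct the minimal CTMC associated with $R$ via its jump chain and holding times, so that the three properties above first hold strictly before the explosion time. The two marginals are individually governed by transition-rate matrices in $\trmat{\mathcal{S}}$ and $\trmat{\mathbb{N}}$, hence extend to well-defined (possibly explosive) CTMCs whose laws coincide with those of the marginals of the coupled chain; the invariance of $D$ then transfers to the globally defined $Y$, yielding the stated upper-bounding property.
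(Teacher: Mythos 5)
Your proposal is correct and follows essentially the same route as the paper's (much terser) proof: realise $(X,Y)$ as a single chain with generator $R$, identify the marginals via conditions (a) and (b), and use condition (c) to show the set $\{\cl{i}\leq \ell\}$ is forward-invariant. Your version simply makes explicit the lumping argument for the marginals and the minimal-construction/explosion issue, both of which the paper leaves implicit.
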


\begin{proof}
The transition-rate matrix allows us to define a coupling between the processes $X$ and $Y$.\\
The conditions (\ref{eq:condition_R}.$a$) and (\ref{eq:condition_R}.$b$) ensure that the marginals of this coupling are well-defined and have respectively the same law than the processes $X$ and $Y$.\\
Moreover the condition (\ref{eq:condition_R}.$c$) ensures that the jumps to state $(j,m) \in \mathcal{S} \times \mathbb{N}$ such that $\cl{j} > m$ are not possible.\\
Thus since the initial condition verifies $\cl{X_0} \leq Y_0$, the order $\cl{X} \leq Y$ is preserved over time.
\end{proof}

The proof of Theorem \ref{thm:main_upper-bound} amounts to finding a transition matrix $R$ satisfying \eqref{eq:condition_R}.
The search for a transition matrix $R$ will be successively simplified in the following lemmas, so as to boil down to an application of Proposition \ref{prop:transportation}.

The first transformation of the problem that we realize simply consists of a translation of $R$, $Q$ and $\tilde{Q}$. For any pair $(i,\ell) \in \mathcal{S} \times \mathbb{N}$, we define $M_{i,\ell}$ as
\begin{equation*}
M_{i,\ell} = - Q_{i,i} - \widetilde{Q}_{\ell,\ell} \geq 0.
\end{equation*}
For any pair $(i,j) \in \mathcal{S}^2$ and for any pair $(\ell,m) \in \mathbb{N}^2$, we define $Q_{(i,\ell),j}'$ and $\widetilde{Q}_{(i,\ell),m}'$ as
\begin{equation*}
\begin{array}{ll}
Q_{(i,\ell),j}'=
Q_{i,j} + M_{i,\ell} ~\delta_{i=j}, & \forall (i,j) \in \mathcal{S}^2, \\
\widetilde{Q}_{(i,\ell),m}'=
\widetilde{Q}_{\ell,m} + M_{i,\ell} ~\delta_{\ell=m}, & \forall (\ell,m) \in \mathbb{N}^2.
\end{array}
\end{equation*}
We use here the Kronecker symbol, i.e., $\delta_{i=j}=1$ if $i=j$, otherwise $\delta_{i=j}=0$. We directly see from the above definitions that $Q_{(i,\ell),j}' \geq 0$ and $\tilde{Q}_{(i,\ell),m}' \geq 0$.

\begin{lem} \label{lem:cond:R'}
Let $R'= (R_{(i,\ell),(j,m)}')_{(i,\ell),(j,m) \in \mathcal{S} \times \mathbb{N}}$ satisfy the following properties:
\begin{equation}\label{eq:conditions_R'}
\begin{array}{rll}
a) & R_{(i,\ell),(j,0:\infty)}' = Q_{(i,\ell),j}' ~, & \forall i \in \mathcal{S},\ \forall \ell \in \mathbb{N},\ \forall j \in \mathcal{S}, \\
b) & R_{(i,\ell),(0:\infty,m)}' = \widetilde{Q}_{(i,\ell),m}' ~, & \forall i \in \mathcal{S}, \ \forall \ell \in \mathbb{N}, \ \forall m \in \mathbb{N}, \\
c) & \cl{i} \leq \ell \text{ and } \cl{j} > m \Rightarrow R_{(i,\ell),(j,m)}' = 0  ~, \ \ & \forall (i,\ell) \in \mathcal{S} \times \mathbb{N}, \ \forall (j,m) \in \mathcal{S} \times \mathbb{N}, \\
d) & R_{(i,\ell),(j,m)}' \geq 0 ~, & \forall (i,\ell) \in \mathcal{S} \times \mathbb{N}, \ \forall (j,m) \in \mathcal{S} \times \mathbb{N}.
\end{array}
\end{equation}
Then the matrix $R= (R_{(i,\ell),(j,m)})_{(i,\ell),(j,m) \in \mathcal{S} \times \mathbb{N}}$ defined by
\begin{equation*}
R_{(i,\ell),(j,m)}=
R_{(i,\ell),(j,m)}' - M_{i,\ell} \delta_{(i,\ell)=(j,m)},
\end{equation*}
is a transition-rate matrix satisfying \eqref{eq:condition_R}.
\end{lem}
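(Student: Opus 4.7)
The plan is to verify directly that $R$ inherits the transition-rate matrix properties from $R'$ (after absorbing the $M_{i,\ell}$ correction into the diagonal) and that conditions (a), (b), (c) of \eqref{eq:condition_R} follow from the corresponding conditions on $R'$ once the diagonal correction is accounted for. The whole proof is algebraic and relies only on the definitions of $Q'$, $\widetilde{Q}'$, and $M_{i,\ell}$ together with the fact that $Q$ and $\widetilde{Q}$ are themselves transition-rate matrices. There is no hard step — just bookkeeping.

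First I would check the three axioms defining a transition-rate matrix. Non-negativity off the diagonal is immediate: for $(j,m) \neq (i,\ell)$, the Kronecker symbol vanishes and $R_{(i,\ell),(j,m)} = R'_{(i,\ell),(j,m)} \geq 0$ by \eqref{eq:conditions_R'}.$d$. For the row sum I would first compute, using Tonelli to swap the double sum of nonnegative terms in $R'$,
\begin{equation*}
\sum_{(j,m)} R'_{(i,\ell),(j,m)}
= \sum_j R'_{(i,\ell),(j,0:\infty)}
= \sum_j Q'_{(i,\ell),j}
= \sum_j Q_{i,j} + M_{i,\ell}
= M_{i,\ell},
\end{equation*}
since $Q$ has zero row sums. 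Therefore $\sum_{(j,m)} R_{(i,\ell),(j,m)} = M_{i,\ell} - M_{i,\ell} = 0$, which is axiom (3). For axiom (2), finiteness of the off-diagonal sum, I would argue that the diagonal entry $R_{(i,\ell),(i,\ell)} = R'_{(i,\ell),(i,\ell)} - M_{i,\ell}$ is non-positive: using \eqref{eq:conditions_R'}.$a$ and nonnegativity of $R'$,
\begin{equation*}
R'_{(i,\ell),(i,\ell)} \leq R'_{(i,\ell),(i,0:\infty)} = Q_{i,i} + M_{i,\ell} = -\widetilde{Q}_{\ell,\ell} \leq M_{i,\ell},
\end{equation*}
so $R_{(i,\ell),(i,\ell)} \leq 0$ and the off-diagonal sum equals $-R_{(i,\ell),(i,\ell)} < \infty$.

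Next I would verify (a), (b), (c) of \eqref{eq:condition_R}. For (a), fixing $i,\ell,j$,
\begin{equation*}
R_{(i,\ell),(j,0:\infty)}
= R'_{(i,\ell),(j,0:\infty)} - M_{i,\ell}\,\delta_{i=j}
= Q'_{(i,\ell),j} - M_{i,\ell}\,\delta_{i=j}
= Q_{i,j},
\end{equation*}
because the Kronecker symbol $\delta_{(i,\ell)=(j,m)}$ summed over $m \in \mathbb{N}$ equals $\delta_{i=j}$, and by definition of $Q'_{(i,\ell),j}$. Identity (b) is proved symmetrically by summing over $j$: $\sum_i \delta_{(i,\ell)=(j,m)} = \delta_{\ell=m}$ and $\widetilde{Q}'_{(i,\ell),m} - M_{i,\ell}\,\delta_{\ell=m} = \widetilde{Q}_{\ell,m}$.

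Finally, condition (c) requires showing $R_{(i,\ell),(j,m)} = 0$ whenever $\cl{i} \leq \ell$ and $\cl{j} > m$. I would observe that these two inequalities are incompatible with $(i,\ell) = (j,m)$: if $(i,\ell) = (j,m)$ then $\cl{j} = \cl{i} \leq \ell = m$, contradicting $\cl{j} > m$. Hence the Kronecker term vanishes and $R_{(i,\ell),(j,m)} = R'_{(i,\ell),(j,m)}$, which is zero by \eqref{eq:conditions_R'}.$c$. This completes the verification.
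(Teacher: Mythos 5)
Your proof is correct and follows essentially the same route as the paper's: verify the three transition-rate axioms directly, then check conditions (a)--(c) of \eqref{eq:condition_R} by noting that the Kronecker correction only affects the diagonal and that $\cl{i}\leq\ell$, $\cl{j}>m$ force $(i,\ell)\neq(j,m)$. The only addition is your explicit check that the diagonal entry of $R$ is non-positive, which the paper leaves implicit; this is a harmless (and slightly more careful) refinement, not a different approach.
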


\begin{proof}
Let us verify that $R$ is a transition-rate matrix. The condition (\ref{eq:conditions_R'}.$d$) and the definition of $R$ imply the nonnegativity of the non-diagonal terms of $R$. Moreover, we have
\begin{equation*}
\sum_{(j,m) \in \mathcal{S} \times \mathbb{N}} R_{(i,\ell),(j,m)}= \sum_{(j,m) \in \mathcal{S} \times \mathbb{N}} R_{(i,\ell),(j,m)}'-M_{i,\ell}
= \sum_{j \in \mathcal{S}} Q_{(i,\ell),j}' - M_{i,\ell}
= \sum_{j \in \mathcal{S}} Q_{i,j} + M_{i,\ell}- M_{i,\ell} = 0,
\end{equation*}
and
\begin{equation*}
\sum_{\substack{(j,m) \in \mathcal{S} \times \mathbb{N}\\ (j,m) \ne (i,\ell)}} R_{(i,\ell),(j,m)}= - R_{(i,\ell),(i,\ell)}'+M_{i,\ell} < \infty,
\end{equation*}
which proves that $R$ is a transition-rate matrix.

Next, using condition (\ref{eq:conditions_R'}.$a$) and the definition of $Q'$, we have that
\begin{equation*}
\sum_{m \in \mathbb{N}} R_{(i,\ell),(j,m)} =
\sum_{m \in \mathbb{N}} R_{(i,\ell),(j,m)}'- M_{i,\ell} \delta_{i=j}
=  Q_{(i,\ell),j}' - M_{i,\ell} \delta_{i=j}
= Q_{i,j},
\end{equation*}
which implies condition (\ref{eq:condition_R}.$a$).
Condition (\ref{eq:condition_R}.$b$) can be verified in a similar fashion.
Finally, consider $i$, $j$, $\ell$, and $m$ such that $\cl{i} \leq \ell$ and $\cl{j} > m$. Then $(i,\ell) \neq (j,m)$, which implies that
$R_{(i,\ell),(j,m)} = R'_{(i,\ell),(j,m)}= 0$, which proves (\ref{eq:condition_R}.$c$) and concludes the proof.
\end{proof}

The following lemma shows that a matrix $R'$ satisfying \eqref{eq:conditions_R'} can be deduced from a matrix $R''= (R_{(i,\ell),(k,m)}'')_{(i,\ell) \in \mathcal{S}\times \mathbb{N},\, (k,m) \in \mathbb{N}^2}$ that is of simpler form, in so far as the third index of $R''$ lies in $\mathbb{N}$ (while the third index of $R'$ lies in $\mathcal{S}$).
Note that in the statement of condition (\ref{eq:conditions_R''}.$a$) below, we make use of the convention \eqref{eq:convention1}, so we have $Q_{(i,\ell),k}'= \sum_{j \in S_k} Q_{(i,\ell),j}$.\\

\begin{lem} \label{lem:cond:R''}
Let $R''= (R_{(i,\ell)(k,m)}'')_{(i,\ell) \in \mathcal{S} \times \mathbb{N}, (k,m) \in \mathbb{N}^2}$ be such that
\begin{equation}
\label{eq:conditions_R''}
\begin{array}{rll}
a) & R_{(i,\ell),(k,0:\infty)}'' = Q_{(i,\ell),k}' ~, & \forall i \in \mathcal{S},\ \forall \ell \in \mathbb{N},\ \forall k \in \mathbb{N}, \\
b) & R_{(i,\ell),(0:\infty,m)}'' = \widetilde{Q}_{(i,\ell),m}' ~,
& \forall i \in \mathcal{S}, \ \forall \ell \in \mathbb{N}, \ \forall m \in \mathbb{N}, \\
c) & \cl{i} \leq \ell \text{ and }
k > m \Rightarrow R_{(i,\ell),(k,m)}'' = 0 ~, \ \ & \forall (i,\ell) \in \mathcal{S} \times \mathbb{N}, \ \forall (k,m) \in \mathbb{N}^2, \\
d) & R_{(i,\ell),(k,m)}'' \geq 0 ~, & \forall (i,\ell) \in \mathcal{S} \times \mathbb{N},\ \forall (k,m) \in \mathbb{N}^2.
\end{array}
\end{equation}
Then, the matrix $R'$ defined
for all $((i,\ell),(j,m)) \in (\mathcal{S}\times \mathbb{N})^2$ by
\begin{equation*}
R_{(i,\ell),(j,m)}'
=
\begin{cases}
\begin{array}{cl}
\frac{Q_{(i,\ell),j}'}{Q_{(i,\ell),\cl{j}}'}
R''_{(i,\ell),(\cl{j},m)} &
\text{ if $Q_{(i,\ell),\cl{j}}' >0$}, \\[1em]
0 & \text{otherwise},
\end{array}
\end{cases}
\end{equation*}
satisfies \eqref{eq:conditions_R'}.
\end{lem}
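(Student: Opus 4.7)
The plan is to verify the four conditions of \eqref{eq:conditions_R'} in turn, directly from the definition of $R'$ in terms of $R''$. Conceptually, $R''$ already encodes the correct transport between the source $(i,\ell)$ and class-level targets $(k,m)$; the formula for $R'$ then disaggregates the target class $k = \cl{j}$ into individual states $j \in S_k$ by distributing mass proportionally to the weights $Q'_{(i,\ell),j}/Q'_{(i,\ell),\cl{j}}$, which sum to $1$ over $j \in S_{\cl{j}}$ by the convention \eqref{eq:convention1}. Nonnegativity (\ref{eq:conditions_R'}.$d$) is then immediate, since $Q'_{(i,\ell),j}$, $Q'_{(i,\ell),\cl{j}}$, and $R''_{(i,\ell),(\cl{j},m)}$ are all nonnegative (the last one by (\ref{eq:conditions_R''}.$d$)).

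\textbf{Marginal identities.} For (\ref{eq:conditions_R'}.$a$), fix $(i,\ell)$ and $j$ and sum over $m$: when $Q'_{(i,\ell),\cl{j}} > 0$, the factor $\sum_m R''_{(i,\ell),(\cl{j},m)}$ equals $Q'_{(i,\ell),\cl{j}}$ by (\ref{eq:conditions_R''}.$a$), cancelling the denominator and leaving $Q'_{(i,\ell),j}$ as required. For (\ref{eq:conditions_R'}.$b$), fix $m$ and decompose $\sum_j R'_{(i,\ell),(j,m)} = \sum_k \sum_{j \in S_k} R'_{(i,\ell),(j,m)}$; inside a class $k$ with $Q'_{(i,\ell),k} > 0$, the inner sum collapses to $R''_{(i,\ell),(k,m)}$ precisely because $\sum_{j \in S_k} Q'_{(i,\ell),j}/Q'_{(i,\ell),k} = 1$. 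Summing over $k$ and invoking (\ref{eq:conditions_R''}.$b$) then produces $\widetilde{Q}'_{(i,\ell),m}$.

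\textbf{Monotonicity and the degenerate case.} Condition (\ref{eq:conditions_R'}.$c$) is handled by applying (\ref{eq:conditions_R''}.$c$) with $k = \cl{j}$: the hypotheses $\cl{i} \leq \ell$ and $\cl{j} > m$ force $R''_{(i,\ell),(\cl{j},m)} = 0$, so $R'_{(i,\ell),(j,m)}$ vanishes regardless of the value of $Q'_{(i,\ell),\cl{j}}$. The only subtlety, and likely the main (albeit minor) obstacle, is checking that the ad hoc choice $R'_{(i,\ell),(j,m)} = 0$ when $Q'_{(i,\ell),\cl{j}} = 0$ remains compatible with (\ref{eq:conditions_R'}.$a$) and (\ref{eq:conditions_R'}.$b$). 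This reduces to the observation that a vanishing sum of nonnegatives has each summand equal to zero: from $0 = Q'_{(i,\ell),\cl{j}} = \sum_{j' \in S_{\cl{j}}} Q'_{(i,\ell),j'}$ one deduces $Q'_{(i,\ell),j} = 0$, so (\ref{eq:conditions_R'}.$a$) holds trivially, and from (\ref{eq:conditions_R''}.$a$) one gets $R''_{(i,\ell),(k,m)} = 0$ for the offending class $k$, so its contribution to the decomposition in (\ref{eq:conditions_R'}.$b$) also vanishes. Beyond this bookkeeping, the proof presents no substantial difficulty, as Fubini--Tonelli readily justifies the interchange of sums used in (\ref{eq:conditions_R'}.$b$).
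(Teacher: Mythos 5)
Your proof is correct and follows essentially the same route as the paper's: verify the four conditions directly, using (\ref{eq:conditions_R''}.$a$) to cancel the denominator for the first marginal, collapsing the inner sum over each class $S_k$ for the second marginal, and handling the degenerate case $Q'_{(i,\ell),\cl{j}}=0$ by noting that all summands of a vanishing nonnegative sum vanish. No gaps.
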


\begin{proof}
Let us verify (\ref{eq:conditions_R'}.$a$).
If $Q_{(i,\ell),\cl{j}}' > 0$, we have with the definition of $R'$ and condition (\ref{eq:conditions_R''}.$a$) that
\begin{equation*}
\sum_{m \in \mathbb{N}} R_{(i,\ell),(j,m)}'
= \frac{Q_{(i,\ell),j}'}{Q_{(i,\ell),\cl{j}}'}
\sum_{m \in \mathbb{N}}
R''_{(i,\ell),(\cl{j},m)}
= \frac{Q_{i,j}'}{Q_{i,\cl{j}}'} Q_{(i,\ell),\cl{j}}'
= Q_{(i,\ell),j}',
\end{equation*}
as was to be proved. Otherwise, if $Q_{(i,\ell),\cl{j}}' = 0$, then necessarily $Q_{(i,\ell),j}'=0$, and by definition, $R_{(i,\ell),(j,m)}'=0$, which shows that (\ref{eq:conditions_R'}.$a$) also holds in that case.

Let us verify (\ref{eq:conditions_R'}.$b$). We have
\begin{equation*}
R_{(i,\ell),(0:\infty,m)}' = \sum_{k \in \mathbb{N}}
\sum_{j \in S_k}  R_{(i,\ell),(j,m)}'.
\end{equation*}
If $Q_{(i,\ell),k}' \neq 0$, we have
\begin{equation*}
\sum_{j \in S_k}  R_{(i,\ell),(j,m)}'
= \sum_{j \in S_k}
\Bigg( \frac{Q_{(i,\ell),j}'}{Q_{(i,\ell),\cl{j}}'}
R''_{(i,\ell),(\cl{j},m)} \Bigg)
= \frac{R''_{(i,\ell),(k,m)}}{Q_{(i,\ell),k}'} \sum_{j \in S_k} Q_{(i,\ell),j}'
= R_{(i,\ell),(k,m)}''.
\end{equation*}
If $Q_{(i,\ell),k}'=0$, then
$\sum_{j \in S_k}  R_{(i,\ell),(j,m)}'=0$ and $R_{(i,\ell),(k,m)}''=0$ (by conditions $a$ and $d$). Thus again, we have
\begin{equation*}
\sum_{j \in S_k}  R_{(i,\ell),(j,m)}'
= R_{(i,\ell),(k,m)}''.
\end{equation*}
Combining the three above equalities and (\ref{eq:conditions_R''}.$b$), we deduce that
\begin{equation*}
R_{(i,\ell),(0:\infty,m)}' = \sum_{k \in \mathbb{N}}
R_{(i,\ell),(k,m)}''
= \widetilde{Q}_{(i,\ell),m}',
\end{equation*}
as was to be shown.

Let $i$, $\ell$, $j$, and $m$ be such that $\cl{i} \leq \ell$ and $\cl{j} > m$. Then, by condition (\ref{eq:conditions_R''}.$c$), $R_{(i,\ell), (\cl{j},m)}''=0$ and thus
$R'_{(i,\ell),(j,m)} = 0$,
which proves that (\ref{eq:conditions_R'}.$c$) is satisfied.
Condition (\ref{eq:conditions_R'}.$d$) is also satisfied, as a direct consequence of (\ref{eq:conditions_R''}.$d$).
\end{proof}

\begin{lem} \label{lem:existence:R''}
Let Assumptions \ref{assum:second_index} and \ref{assum:first_index} hold true. Then there exists $R''$ satisfying \eqref{eq:conditions_R''}.
\end{lem}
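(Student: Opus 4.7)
The argument I propose proceeds one row at a time: the conditions in \eqref{eq:conditions_R''} decouple across the first index $(i,\ell)\in\mathcal{S}\times\mathbb{N}$, so it is enough, for each fixed such pair, to construct $(R''_{(i,\ell),(k,m)})_{(k,m)\in\mathbb{N}^2}$ as a non-negative array with row marginals $a_k:=Q'_{(i,\ell),k}$ and column marginals $b_m:=\widetilde{Q}'_{(i,\ell),m}$, also satisfying condition (c) when $\cl{i}\leq\ell$. A short computation shows that $a,b\in\seq$ with $a_{0:\infty}=b_{0:\infty}=M_{i,\ell}$, and both sequences are non-negative because the $M_{i,\ell}$ shift cancels the only possibly negative entries, namely the diagonal ones $Q_{i,i}$ and $\widetilde{Q}_{\ell,\ell}$.

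When $\cl{i}>\ell$, condition (c) is vacuous, so I would simply take the independent coupling $R''_{(i,\ell),(k,m)}=a_k b_m/M_{i,\ell}$ (or the zero array when $M_{i,\ell}=0$); this trivially satisfies the four conditions. When $\cl{i}\leq\ell$, the plan is to apply Proposition \ref{prop:transportation} to $(a,b)$ and define $R''_{(i,\ell),(k,m)}:=\bar\Pi[a,b]_{k,m}$. The three identities in \eqref{eq:transport} then translate exactly into conditions (a), (b), (c) of \eqref{eq:conditions_R''}, and non-negativity of $\bar\Pi[a,b]$ delivers (d).

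The one non-trivial step is to verify the partial-sum hypothesis $a_{0:K}\geq b_{0:K}$ for every $K\in\mathbb{N}$. Expanding gives
\begin{equation*}
a_{0:K}-b_{0:K}=Q_{i,0:K}-\widetilde{Q}_{\ell,0:K}+M_{i,\ell}\bigl(\mathbbm{1}[\cl{i}\leq K]-\mathbbm{1}[\ell\leq K]\bigr),
\end{equation*}
and I would split on the position of $K$ relative to $\cl{i}$ and $\ell$. In the ranges $K<\cl{i}$ and $K\geq\ell$, the $M_{i,\ell}$ correction is non-negative, and one chains Assumption \ref{assum:second_index} at class $\cl{i}$ (which gives $\widetilde{Q}_{\cl{i},0:K}\leq Q_{i,0:K}$) with a telescoping of Assumption \ref{assum:first_index} along $\ell'=\cl{i},\dots,\ell-1$, obtaining $\widetilde{Q}_{\ell,0:K}\leq\widetilde{Q}_{\cl{i},0:K}$; the side condition $\ell'\neq K$ in \ref{assum:first_index} is automatically satisfied because $K$ lies outside $[\cl{i},\ell-1]$ in these ranges.

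The remaining range $\cl{i}\leq K<\ell$ is precisely where the telescoping would hit the forbidden index $\ell'=K$, and this is where I expect the main subtlety to sit. The resolution is that the $M_{i,\ell}$ correction is fully active here, so one can bypass \ref{assum:second_index} and \ref{assum:first_index} entirely: non-negativity of the off-diagonal rates yields $Q_{i,0:K}\geq Q_{i,i}$, while $\widetilde{Q}_{\ell,K+1:\infty}\geq\widetilde{Q}_{\ell,\ell}$ (the tail contains the diagonal plus non-negative terms, since $K<\ell$) gives $\widetilde{Q}_{\ell,0:K}\leq-\widetilde{Q}_{\ell,\ell}$. Combined with $M_{i,\ell}=-Q_{i,i}-\widetilde{Q}_{\ell,\ell}$, these two bounds force $a_{0:K}-b_{0:K}\geq 0$, and assembling the two main cases completes the construction of $R''$.
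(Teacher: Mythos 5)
Your proposal is correct and follows essentially the same route as the paper: the same row-by-row decoupling, the same independent coupling $a_k b_m/M_{i,\ell}$ when $\cl{i}>\ell$, the same application of Proposition \ref{prop:transportation} when $\cl{i}\leq\ell$, and the same two-case verification of $a_{0:K}\geq b_{0:K}$ (telescoping \ref{assum:first_index} outside $[\cl{i},\ell-1]$, and the diagonal/translation bound $Q_{i,0:K}\geq Q_{i,i}$, $\widetilde{Q}_{\ell,0:K}\leq-\widetilde{Q}_{\ell,\ell}$ inside). You also correctly identified the exact subtlety the paper flags, namely that the telescoping would need the forbidden step $\ell'=K$ in the middle range.
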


\begin{proof}
The construction of $R''$ is realized for each pair $(i,\ell)$, separately from one another. So we fix $(i,\ell)$ and consider two cases.
\begin{itemize}
\item Case 1: $\cl{i} \leq \ell$. The construction of $R_{(i,\ell),(\cdot,\cdot)}''$ follows from Proposition \ref{prop:transportation}, applied with the sequences $a:= \big(Q_{(i,\ell),j}' \big)_{j \in \mathbb{N}}$ and $b:= \big(\widetilde{Q}_{(i,\ell),m}' \big)_{m \in \mathbb{N}}$. The result of Proposition \ref{prop:transportation} indeed implies all statements of \eqref{eq:conditions_R''}. It remains to verify the applicability of the proposition. First we note that $a$ and $b$ are nonnegative sequences. Moreover,
\begin{equation*}
a_{0:\infty}
= Q_{(i,\ell),0:\infty}'
= Q_{i,0:\infty} + M_{i,\ell}
= M_{i,\ell},
\end{equation*}
and similarly, $b_{0:\infty}= M_{i,\ell}$; so $a, b \in \seq{}$. It remains to verify that for any $k \in \mathbb{N}$, $a_{0:k} \geq b_{0:k}$. Let $k \in \mathbb{N}$. Note first that
\begin{align*}
a_{0:k}
= Q_{(i,\ell),0:k}'
= Q_{i,0:k} + M_{i,\ell} ~\delta_{k \geq \cl{i}}, \\
b_{0:k}
= \widetilde{Q}_{(i,\ell),0:k}'
= \widetilde{Q}_{\ell,0:k} + M_{i,\ell} ~\delta_{k \geq \ell}.
\end{align*}
Next we consider two cases.
\begin{itemize}
\item Case (a): $k \notin \{ \cl{i},\ldots,\ell-1 \}$. Using Assumption \ref{assum:second_index} once and Assumption \ref{assum:first_index} repeatedly, we obtain that
\begin{equation*}
Q_{i,0:k}
\geq \widetilde{Q}_{\cl{i},0:k}
\geq \widetilde{Q}_{\cl{i}+1,0:k}
\geq \ldots
\geq \widetilde{Q}_{\ell,0:k}.
\end{equation*}
Next, as $\cl{i} \leq \ell$, we have
\begin{equation*}
\delta_{k \geq \cl{i}} \geq \delta_{k \geq \ell}.
\end{equation*}
Combining the two derived inequalities with the expressions of $a_{0:k}$ and $b_{0:k}$, we deduce that $a_{0:k} \geq b_{0:k}$ as was to be shown.
\item Case (b): $k \in \{ \cl{i},\ldots,\ell-1 \}$. Note first that the above proof does not work anymore, as it would require us to utilize the inequality $\widetilde{Q}_{k,0:k} \geq \widetilde{Q}_{k+1,0:k}$, which we do not assume because it is wrong in general (indeed: $\widetilde{Q}_{k,0:k}\leq 0$ and $\widetilde{Q}_{k+1,0:k} \geq 0$). So we proceed differently. We have the inequalities:
\begin{equation*}
Q_{i,0:k} \geq Q_{i,i} \quad
\text{and} \quad
\widetilde{Q}_{\ell,0:k} \leq -\widetilde{Q}_{\ell,\ell} ~,
\end{equation*}
which are actually true for any $k$ in $\mathbb{N}$. As we consider here the case with $k \geq \cl{i}$ and $k<\ell$, we have:
\begin{align*}
a_{0:k} \geq {} & Q_{i,i} + M_{i,\ell} ~\delta_{k \geq \cl{i}}
= Q_{i,i} + M_{i,\ell}
= -\widetilde{Q}_{\ell,\ell} ~, \\
b_{0:k} \leq {} & - \widetilde{Q}_{\ell,\ell} + M_{i,\ell} ~ \delta_{k \geq \ell} = -\widetilde{Q}_{\ell,\ell}.
\end{align*}
So $a_{0:k} \geq b_{0:k}$, as was to be shown.
\end{itemize}
\item Case 2: $\cl{i} > \ell$. This case is easier to treat, as condition (\ref{eq:conditions_R''}.$c$) is trivially satisfied. Define
\begin{equation*}
R_{(i,\ell)(k,m)}''
=
\begin{cases}
\begin{array}{cl}
\frac{1}{M_{i,\ell}} Q_{(i,\ell),k} \widetilde{Q}_{(i,\ell),m}, &
\text{ if } M_{i,\ell} \geq 0, \\
0, & \text { otherwise. }
\end{array}
\end{cases}
\end{equation*}
The nonnegativity of $R_{(i,\ell)(k,m)}''$ is clear in both cases. If $M_{i,\ell} > 0$, then
\begin{equation*}
R_{(i,\ell),(k,0:\infty)}
= \frac{1}{M_{i,\ell}} Q_{(i,\ell),k} Q_{(i,\ell),0:\infty}'
= \frac{1}{M_{i,\ell}} Q_{(i,\ell),k} M_{i,\ell} = Q_{(i,\ell),k},
\end{equation*}
so condition (\ref{eq:conditions_R''}.$a$) holds true. If $M_{i,\ell}=0$, then $Q_{i,i}=0$ and $Q_{i,j}=0$, for any $j$. This further implies that $Q_{(i,\ell),k}'=0$, and so condition (\ref{eq:conditions_R''}.$a$) also holds true in that case. Condition (\ref{eq:conditions_R''}.$b$) can be verified with similar arguments.
\end{itemize}
\end{proof}

\begin{proof}[of Theorem \ref{thm:main_upper-bound}]
Lemma \ref{lem:existence:R''} gives the existence of a matrix $R''$ satisfying the assumptions of Lemma \ref{lem:cond:R''}, which in turn gives the existence of a matrix $R'$ satisfying the assumptions of Lemma \ref{lem:cond:R'}, which gives the existence of a matrix $R$ satisfying the assumptions of Lemma \ref{lem:cond:R}, which finally implies Theorem \ref{thm:main_upper-bound}.
\end{proof}

\subsection{Construction of an upper-bounding transition-rate matrix}
\label{subsec:construction_Qtilde}

We address the issue of finding a transition-rate matrix $\widetilde{Q}$ that satisfies Assumptions \ref{assum:second_index} and \ref{assum:first_index}. Our analysis relies on a transformation of the transition-rate matrices $\widetilde{Q} \in \trmat{\mathbb{N}}$, which we define first. Consider the sets:
\begin{equation*}
\Omega^-
= \Big\{ (\ell,m) \in \mathbb{N}^2 \,|\, m< \ell \Big\}
\quad \text{and} \quad
\Omega^+
= \Big\{ (\ell,m) \in \mathbb{N}^2 \,|\, m > \ell \Big\}.
\end{equation*}
Consider the map $\Phi \colon \widetilde{Q} \in \trmat{\mathbb{N}} \mapsto \Phi(\widetilde{Q})$, where $\Phi(\widetilde{Q})= (U^-,U^+)$ and where
\begin{equation*}
\begin{array}{l}
U^- = \big( \widetilde{Q}_{\ell,0:m} \big)_{(\ell,m) \in \Omega^-}, \\
U^+ = \big( \widetilde{Q}_{\ell,m:\infty} \big)_{(\ell,m) \in \Omega^+}.
\end{array}
\end{equation*}
We introduce next the set $\mathbf{U}$, defined as the set of pairs $(U^-,U^+)$ where $U^-= \big( U_{\ell,m}^- \big)_{(\ell,m) \in \Omega^-}$ and $U^+ = \big( U_{\ell,m}^+ \big)_{(\ell,m) \in \Omega^+}$ satisfy the following:
\begin{equation} \label{eq:cond:Umono}
\begin{array}{rll}
a) & U_{\ell,m-1}^- \leq U_{\ell,m}^- & \forall (\ell,m) \in \Omega^- \text{ such that } m>0, \\
b) & U_{\ell,m}^+ \geq U_{\ell,m+1}^+,
& \forall (\ell,m) \in \Omega^+,
\end{array}
\end{equation}
and
\begin{equation} \label{eq:cond:Uboundary}
\begin{array}{rl}
a) & U^-_{\ell,0} \geq 0, \quad \forall \ell \in \mathbb{N}^*, \\
b) & \underset{m \to+\infty}{\lim} U_{\ell,m}^+ = 0, \quad \forall \ell \in \mathbb{N}.
\end{array}
\end{equation}

\begin{lem}\label{lem:bijec}
The mapping $\Phi$ is a bijection from $\trmat{\mathbb{N}}$ to $\mathbf{U}$. Its inverse is given by
\begin{equation*}
\Phi^{-1}(U^-,U^+)_{\ell,m}
= \begin{cases}
\begin{array}{cl}
U^-_{\ell,m} - U^-_{\ell,m-1} & \text{ if $(\ell,m) \in \Omega^- $}, \\
- U^-_{\ell,\ell-1} - U^+_{\ell,\ell+1} & \text{ if $\ell=m$}, \\
U^+_{\ell,m} - U^+_{\ell,m+1} & \text{ if $(\ell, m) \in \Omega^+$}.
\end{array}
\end{cases}
\end{equation*}
with the convention for all $\ell \in \mathbb{N}$, $U^-_{\ell,-1} =0$.
\end{lem}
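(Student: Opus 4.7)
The proof is a direct verification in three parts: (i) $\Phi$ sends $\trmat{\mathbb{N}}$ into $\mathbf{U}$; (ii) the proposed formula, call it $\Psi$, sends $\mathbf{U}$ into $\trmat{\mathbb{N}}$; (iii) $\Psi \circ \Phi$ and $\Phi \circ \Psi$ are both the identity. The single observation driving the whole argument is that, by the convention \eqref{eq:convention1}, the consecutive differences $U^-_{\ell,m} - U^-_{\ell, m-1}$ and $U^+_{\ell,m} - U^+_{\ell, m+1}$ recover the individual off-diagonal entries $\widetilde{Q}_{\ell,m}$, so everything reduces to telescoping sums.

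For (i), given $\widetilde{Q} \in \trmat{\mathbb{N}}$, the monotonicity \eqref{eq:cond:Umono}.a is equivalent to $\widetilde{Q}_{\ell,m} \geq 0$ for $m < \ell$, and \eqref{eq:cond:Umono}.b to $\widetilde{Q}_{\ell,m} \geq 0$ for $m > \ell$; both follow since off-diagonal entries of a transition-rate matrix are non-negative. The boundary condition \eqref{eq:cond:Uboundary}.a is the equality $U^-_{\ell,0} = \widetilde{Q}_{\ell,0}$ for $\ell \geq 1$, and \eqref{eq:cond:Uboundary}.b holds because $U^+_{\ell,m}$ is a tail of the non-negative convergent series $\sum_{k \neq \ell} \widetilde{Q}_{\ell,k}$, whose convergence is part of the definition of $\trmat{\mathbb{N}}$.

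For (ii), define $\widetilde{Q} = \Psi(U^-,U^+)$ by the formula in the statement, with the convention $U^-_{\ell,-1} = 0$. Non-negativity of off-diagonal entries is immediate from \eqref{eq:cond:Umono} (and from \eqref{eq:cond:Uboundary}.a in the edge case $m=0 < \ell$). The row sums are computed by telescoping:
\begin{equation*}
\sum_{m=0}^{\ell-1} \widetilde{Q}_{\ell,m} = U^-_{\ell,\ell-1} - U^-_{\ell,-1} = U^-_{\ell,\ell-1},
\qquad
\sum_{m=\ell+1}^{\infty} \widetilde{Q}_{\ell,m} = U^+_{\ell,\ell+1} - \lim_{m \to \infty} U^+_{\ell,m} = U^+_{\ell,\ell+1},
\end{equation*}
where \eqref{eq:cond:Uboundary}.b is used in the second limit. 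Adding the diagonal contribution $-U^-_{\ell,\ell-1} - U^+_{\ell,\ell+1}$ yields $0$, and the off-diagonal absolute sum equals the finite quantity $U^-_{\ell,\ell-1} + U^+_{\ell,\ell+1}$; hence $\widetilde{Q} \in \trmat{\mathbb{N}}$.

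For (iii), that $\Psi \circ \Phi = \mathrm{Id}$ on $\Omega^-$ and $\Omega^+$ is just the differencing identity above; on the diagonal one uses $\widetilde{Q}_{\ell,\ell} = -\sum_{m \neq \ell} \widetilde{Q}_{\ell,m} = -\widetilde{Q}_{\ell,0:\ell-1} - \widetilde{Q}_{\ell,\ell+1:\infty}$, which matches the formula. Conversely, $\Phi \circ \Psi = \mathrm{Id}$ is the same telescoping computation as in (ii) truncated at finite $m$. The only delicate points are the boundary case $\ell = 0$ (handled uniformly by the convention $U^-_{\ell,-1} = 0$) and the use of $\lim_{m \to \infty} U^+_{\ell,m} = 0$ in the upper telescoping sum; neither is a genuine obstacle, so I expect no hard step in the argument.
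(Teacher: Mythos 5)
Your proof is correct and follows essentially the same route as the paper's: verify that $\Phi$ and the candidate inverse $\Psi$ map into the right sets, then establish the identity compositions via the same telescoping-sum computations. You are in fact marginally more complete than the paper, which only spells out $\Psi \circ \Phi = \mathrm{Id}$ and leaves $\Phi \circ \Psi = \mathrm{Id}$ implicit.
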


\begin{proof}
Let us call $\Psi$ the function from $\mathbf{U}$ to $\trmat{\mathbb{N}}$ defined in the lemma. As a first step, we will prove that the mappings $\Phi$ and $\Psi$ are well-defined.\\
Let $Q \in \trmat{\mathbb{N}}$ and $(U^-,U^+) = \Phi(Q)$. We need to check that $(U^-,U^+)$ is in $\mathbf{U}$, that is it verifies \eqref{eq:cond:Umono} and \eqref{eq:cond:Uboundary}.\\
Since for any $\ell \ne m \in \mathbb{N}$, $Q_{\ell,m} \geq 0$, by definition of $U_{\ell,m}^-$ and $U_{\ell,m}^+$, the equation \eqref{eq:cond:Umono} is verified by $(U^-,U^+)$.\\
Moreover, for all $\ell \geq 1$, $U^-_{\ell,0} = Q_{\ell,0:0} = Q_{\ell,0} \geq 0$. Finally, for any $\ell \in \mathbb{N}$, by definition of $\trmat{\mathbb{N}}$, the sum of the sequence $\big( Q_{\ell,m} \big)_{m > \ell}$ is finite, so the limit of the rest of this sum goes to $0$ and we have proved the condition \eqref{eq:cond:Uboundary}b.\\
Let $(U^-,U^+) \in \mathbf{U}$ and $Q = \Psi(U^-,U^+)$. We need to check that $Q \in \trmat{\mathbb{N}}$.\\
With \eqref{eq:cond:Umono} and \eqref{eq:cond:Uboundary}a, the positivity of $Q_{\ell,m}$ for any $\ell \ne m$ is straightforward. Let us compute the sum according to the second index, we have on one hand for any $(\ell,m) \in \Omega^-$,
$$ Q_{\ell,0:\ell-1} = U^-_{\ell,0} + \sum_{k = 1}^{\ell -1} \big( U^-_{\ell,k} - U^-_{\ell,k-1} \big) = U^-_{\ell,\ell-1},$$
and on the other hand, with \eqref{eq:cond:Uboundary}b, for any $(\ell,m) \in \Omega^+$:
$$ \sum_{k = \ell+1}^m Q_{\ell,k} = \sum_{k = \ell+1}^m \big( U^+_{\ell,k} - U^+_{\ell,k+1} \big) = U^+_{\ell, \ell +1} - U^+_{\ell,m+1} \underset{m \to +\infty}{\longrightarrow} U^+_{\ell, \ell +1}. $$
So $\sum\limits_{m \ne \ell} Q_{\ell,m} = U^-_{\ell,\ell-1} + U^+_{\ell, \ell +1} < \infty$ and $\sum\limits_{m \in \mathbb{N}} Q_{\ell,m} = 0$.\\
It only remains to prove that for any $Q \in \trmat{\mathbb{N}}$, $\Psi \big( \Phi(Q) \big) = Q$. For convenience, we set $V_{\ell,m} = \Psi \big( \Phi(Q) \big)_{\ell,m}$ for all $(\ell,m) \in \mathbb{N}^2$ and $\Phi(Q) = (U^-,U^+)$.\\
Let $(\ell,m) \in \mathbb{N}^2$, three cases must be considered.
\begin{itemize}
\item If $(\ell, m) \in \Omega^-$, $V_{\ell,m} = U^-_{\ell,m}- U^-_{\ell,m-1} = Q_{\ell,0:m} - Q_{\ell,0:m-1} =  Q_{\ell,m}$.
\item If $m=\ell$, $V_{\ell,\ell} = -U^-_{\ell,\ell-1}- U^+_{\ell,\ell+1} = -Q_{\ell,0:\ell-1} - Q_{\ell,\ell+1:\infty} =  Q_{\ell,\ell}$ where the last equality comes from the fact that the sum according to the second coordinate of $Q$ is equal to $0$.
\item If $(\ell,m) \in \Omega^+$, $V_{\ell,m} = U^+_{\ell,m}- U^+_{\ell,m+1} = Q_{\ell,m:\infty} - Q_{\ell,m+1:\infty} =  Q_{\ell,m}$.
\end{itemize}
This concludes the proof.
\end{proof}

Next we define $f^-= (f_{\ell,m}^-)_{(\ell,m) \in \Omega^- }$ and $f^+= (f_{\ell,m}^+)_{(\ell,m) \in \Omega^+ }$ as
\begin{align}
f_{\ell,m}^-= {} \min_{i \in S_{\ell}} \, Q_{i,0:m} \quad \text{and} \quad
f_{\ell,m}^+ = {} \max_{i \in S_{\ell}} \, Q_{i,m:\infty}, \label{eq:def_f_upper}
\end{align}
which are well-defined since each class is finite.\\

\begin{lem}\label{lem:2.8}
Let $\widetilde{Q} \in \trmat{\mathbb{N}}$ and let $(U^-,U^+)= \Phi(\widetilde{Q})$.
Then $\widetilde{Q}$ satisfies Assumption \ref{assum:second_index} if and only if
\begin{equation} \label{eq:condUA1}
\begin{array}{rll}
a) & U_{\ell,m}^- \leq f_{\ell,m}^-, & \forall (\ell,m) \in \Omega^-, \\
b) & U_{\ell,m}^+ \geq f_{\ell,m}^+, & \forall (\ell,m) \in \Omega^+.
\end{array}
\end{equation}
Moreover, $\widetilde{Q}$ satisfies Assumption \ref{assum:first_index} if and only if
\begin{equation} \label{eq:condUA2}
\begin{array}{rll}
a) & U_{\ell+1,m}^- \leq U_{\ell,m}^-
& \forall (\ell,m) \in \Omega^-, \\
b) & U_{\ell+1,m+1}^+ \geq U_{\ell,m+1}^+, & \forall (\ell,m) \in \Omega^+.
\end{array}
\end{equation}
\end{lem}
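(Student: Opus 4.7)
The plan is to translate each of the two assumptions directly into a condition on the pair $(U^-,U^+)$, relying on the case analysis already spelled out in Remarks \ref{rem:assumptionA1} and \ref{rem:assumptionA2}. The two ingredients the proof will repeatedly exploit are: the defining identities $U^-_{\ell,m} = \widetilde{Q}_{\ell,0:m}$ for $(\ell,m) \in \Omega^-$ and $U^+_{\ell,m} = \widetilde{Q}_{\ell,m:\infty}$ for $(\ell,m) \in \Omega^+$; and the row-sum-zero identities $\widetilde{Q}_{\ell,0:m} = -\widetilde{Q}_{\ell,m+1:\infty}$ and $Q_{i,0:m} = -Q_{i,m+1:\infty}$ noted just after \eqref{eq:convention1}, which allow one to flip a cumulative-from-below inequality into a cumulative-from-above one.

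For the characterization of Assumption \ref{assum:second_index}, I would fix $\ell$ and split on $m$. When $m < \ell$, the inequality $\widetilde{Q}_{\ell,0:m} \leq Q_{i,0:m}$ is literally $U^-_{\ell,m} \leq Q_{i,0:m}$; since $S_\ell$ is finite, requiring this for every $i \in S_\ell$ is equivalent to $U^-_{\ell,m} \leq f^-_{\ell,m}$, which is (\ref{eq:condUA1}$.a$). When $m \geq \ell$, flipping signs via the row-sum-zero identities turns \ref{assum:second_index} into $\widetilde{Q}_{\ell,m+1:\infty} \geq Q_{i,m+1:\infty}$; re-indexing with $m' := m+1 \geq \ell+1$ places $(\ell,m')$ in $\Omega^+$, and the $i$-wise inequality becomes $U^+_{\ell,m'} \geq f^+_{\ell,m'}$, which is (\ref{eq:condUA1}$.b$). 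Reading each equivalence in reverse establishes the converse implication.

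The argument for Assumption \ref{assum:first_index} is essentially the same, and the hypothesis $\ell \neq m$ is precisely what lets the two cases cover the relevant domain. When $m < \ell$, both $(\ell,m)$ and $(\ell+1,m)$ lie in $\Omega^-$, and $\widetilde{Q}_{\ell+1,0:m} \leq \widetilde{Q}_{\ell,0:m}$ rewrites as $U^-_{\ell+1,m} \leq U^-_{\ell,m}$, which is (\ref{eq:condUA2}$.a$). When $m > \ell$, applying row-sum-zero on both sides yields $\widetilde{Q}_{\ell+1,m+1:\infty} \geq \widetilde{Q}_{\ell,m+1:\infty}$; since $m+1 > \ell+1$, both $(\ell,m+1)$ and $(\ell+1,m+1)$ lie in $\Omega^+$, so this reads $U^+_{\ell+1,m+1} \geq U^+_{\ell,m+1}$, which is (\ref{eq:condUA2}$.b$).

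There is no real obstacle in this lemma: everything reduces to bookkeeping. The only places to stay alert are the index shift $m \mapsto m+1$ when the argument crosses from the $U^-$ regime to the $U^+$ regime (which is exactly where the diagonal term $\widetilde{Q}_{\ell,\ell}$ gets absorbed into the sum), and the fact that the exclusion $\ell \neq m$ in \ref{assum:first_index} is what lets the two cases partition the relevant domain without leaving a diagonal inequality to verify.
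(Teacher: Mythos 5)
Your proposal is correct and follows exactly the same route as the paper, whose proof simply invokes the case disjunctions of Remarks \ref{rem:assumptionA1} and \ref{rem:assumptionA2}; you have merely written out the bookkeeping (the definitional identities for $U^\pm$, the sign flip via the row-sum-zero identity, and the index shift $m \mapsto m+1$) that the paper leaves implicit. No gaps.
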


\begin{proof}
The result directly follows from the case disjunction realized in Remarks \ref{rem:assumptionA1} and \ref{rem:assumptionA2}.
\end{proof}

A consequence of the three above lemmas is that the problem of finding a transition-rate matrix $\widetilde{Q}$ satisfying Assumptions \ref{assum:second_index} and \ref{assum:first_index} is equivalent to finding $U^-$ and $U^+$ satisfying \eqref{eq:cond:Umono}, \eqref{eq:cond:Uboundary}, \eqref{eq:condUA1}, and \eqref{eq:condUA2}. Note that the conditions to be satisfied by $U^-$ are independent from those to be satisfied by $U^+$. We define next two candidates $\bar{U}^-$ and $\bar{U}^+$ as follows:

\begin{equation}\label{eq:optimal_upper}
\begin{array}{rll}
\bar{U}^-_{\ell,m}
= & \min\limits_{\ell' \in \{ m+1, \ldots, \ell \} } f_{\ell',m }^- & \forall (\ell,m) \in \Omega^-,\\
\bar{U}^+_{\ell,m}
= & \max\limits_{\ell' \in \{ 0, \ldots, \ell \} } f_{\ell',m }^+ & \forall (\ell,m) \in \Omega^+.
\end{array}
\end{equation}

\begin{prop}\label{prop:optimal_upper}
Firstly, the quantities $\bar{U}^-$ and $\bar{U}^+$ are well-defined and satisfy \eqref{eq:cond:Umono}, \eqref{eq:cond:Uboundary}, \eqref{eq:condUA1}, and \eqref{eq:condUA2}.
Therefore $\Phi^{-1}(\bar{U}^-,\bar{U}^+)$ is a transition-rate matrix satisfying Assumptions \ref{assum:second_index} and \ref{assum:first_index}.\\
Moreover, the transition-rate matrix $\Phi^{-1}(\bar{U}^-,\bar{U}^+)$ is optimal in the following sense: any other transition-rate matrix $\widetilde{Q}$ satisfying Assumptions \ref{assum:second_index} and \ref{assum:first_index} is necessarily such that
\begin{equation*}
\widetilde{Q}_{\ell,0:m}
\leq \Phi^{-1}(\bar{U}^-,\bar{U}^+)_{\ell,0:m}, \quad
\forall (\ell,m) \in \mathbb{N}^2.
\end{equation*}
\end{prop}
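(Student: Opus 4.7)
My plan is to treat the two halves of the proposition separately, rephrasing everything in terms of $(U^-,U^+)$ through the bijection $\Phi$ of Lemma \ref{lem:bijec} and the correspondence of assumptions given by Lemma \ref{lem:2.8}.

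For the first half, well-definedness of $\bar{U}^\pm$ is immediate since each extremum is taken over a nonempty finite index set and each $f^\pm_{\ell',m}$ is well defined because the classes $S_{\ell'}$ are finite. Conditions \eqref{eq:condUA1} and \eqref{eq:condUA2} fall out by direct inspection of formula \eqref{eq:optimal_upper}: the choice $\ell'=\ell$ is admissible in both the min and the max, giving \eqref{eq:condUA1}; and the index sets grow with $\ell$ ($\{m+1,\ldots,\ell+1\}\supset\{m+1,\ldots,\ell\}$, and analogously for the max), giving \eqref{eq:condUA2}. Conditions \eqref{eq:cond:Umono} and \eqref{eq:cond:Uboundary} reduce to an elementary monotonicity of $f^\pm$: for $i\in S_{\ell'}$, the increment $Q_{i,0:m}-Q_{i,0:m-1}=Q_{i,m}=\sum_{j\in S_m}Q_{i,j}$ is a sum of off-diagonal entries whenever $m\neq\ell'$, hence is nonnegative, making $m\mapsto f^-_{\ell',m}$ nondecreasing on $\{0,\ldots,\ell'-1\}$ and $m\mapsto f^+_{\ell',m}$ nonincreasing on $\{\ell'+1,\ldots\}$; these are exactly the regimes that arise in \eqref{eq:cond:Umono}. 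Condition \eqref{eq:cond:Uboundary}a follows likewise from nonnegativity of off-diagonal entries, and \eqref{eq:cond:Uboundary}b from the fact that each tail $Q_{i,m:\infty}$ tends to $0$ (being the remainder of a convergent series, by definition of $\trmat{\mathcal{S}}$) and the maximum in $\bar{U}^+_{\ell,m}$ is taken over only finitely many such tails.

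For the second half (optimality), I fix any $\widetilde{Q}$ satisfying \ref{assum:second_index} and \ref{assum:first_index}, set $(U^-,U^+)=\Phi(\widetilde{Q})$, and split according to whether $m<\ell$ or $m\geq\ell$. A direct telescoping computation from the explicit formula for $\Phi^{-1}$ in Lemma \ref{lem:bijec} gives, for any $(V^-,V^+)\in\mathbf{U}$,
\begin{equation*}
\Phi^{-1}(V^-,V^+)_{\ell,0:m}=
\begin{cases}
V^-_{\ell,m} & \text{if } m<\ell,\\
-V^+_{\ell,m+1} & \text{if } m\geq\ell,
\end{cases}
\end{equation*}
which I apply both to $(U^-,U^+)$ and to $(\bar{U}^-,\bar{U}^+)$. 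When $m<\ell$, iterating \eqref{eq:condUA2}a along the chain $\ell\to\ell-1\to\cdots\to m+1$ (each intermediate pair remaining in $\Omega^-$) yields $U^-_{\ell,m}\leq U^-_{\ell',m}$ for every $\ell'\in\{m+1,\ldots,\ell\}$; combining with \eqref{eq:condUA1}a and taking the minimum over $\ell'$ gives $U^-_{\ell,m}\leq\bar{U}^-_{\ell,m}$, i.e., $\widetilde{Q}_{\ell,0:m}\leq\Phi^{-1}(\bar{U}^-,\bar{U}^+)_{\ell,0:m}$. When $m\geq\ell$, the symmetric iteration of \eqref{eq:condUA2}b along $\ell\to\ell-1\to\cdots\to 0$ (each step valid because $m+1>\ell'$ at every intermediate $\ell'$) combined with \eqref{eq:condUA1}b yields $U^+_{\ell,m+1}\geq\max_{\ell'\in\{0,\ldots,\ell\}}f^+_{\ell',m+1}=\bar{U}^+_{\ell,m+1}$, and the sign flip in the formula above turns this into the required inequality on row-sums.

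The only subtle point is the bookkeeping in the regime $m\geq\ell$, where $\widetilde{Q}_{\ell,0:m}$ does not correspond directly to a coordinate of $\Phi(\widetilde{Q})$ and must be rewritten as $-U^+_{\ell,m+1}$ via the row-sum-zero identity; the desired upper bound on $\widetilde{Q}_{\ell,0:m}$ then becomes a lower bound on $U^+_{\ell,m+1}$, which is precisely what the maximum in the definition of $\bar{U}^+$ supplies after iterating \eqref{eq:condUA1}b and \eqref{eq:condUA2}b. Everything else is routine verification, and the extremal formulas in \eqref{eq:optimal_upper} are exactly the tightest choices that saturate \eqref{eq:condUA1} while respecting the monotonicity imposed by \eqref{eq:condUA2}.
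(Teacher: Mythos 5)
Your proof is correct and follows essentially the same route as the paper: well-definedness from finiteness of the classes, direct verification of \eqref{eq:cond:Umono}--\eqref{eq:condUA2} via monotonicity of $f^\pm$ and of the index sets, and optimality by translating $\widetilde{Q}_{\ell,0:m}$ into $U^-_{\ell,m}$ (for $m<\ell$) or $-U^+_{\ell,m+1}$ (for $m\geq\ell$) and iterating \ref{assum:first_index} before applying \ref{assum:second_index}. The only cosmetic difference is that you prove the optimality inequality directly where the paper argues by contradiction through the extremizing index $\ell_0$; the underlying chain of inequalities is identical.
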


\begin{proof}
Since $\bar{U}^-$ (respectively $\bar{U}^+$) is a minimum (resp. a maximum) on a finite non empty set, it is well-defined.\\
It is direct to see that $\bar{U}^-$ and $\bar{U}^+$ satisfy \eqref{eq:cond:Umono}, \eqref{eq:condUA1} and \eqref{eq:condUA2}. Condition (\ref{eq:cond:Uboundary}.$a$) is straightforward since $f_{\ell,m}^-$ is positive for all $(\ell,m) \in \Omega^-$.\\
Finally, for any $\ell \in \mathbb{N}$, the sequence $(f_{\ell,m}^+)_{m > \ell}$ is decreasing, so we have for all $(\ell,m) \in \Omega^+$,
$$ \bar{U}^+_{\ell,m} = \max_{ \ell' \in \{ 0, \ldots, \ell \}} f_{\ell',m}^+ = \max_{\ell' \leq \ell} \max_{i \in S_{\ell'}} Q_{i,m:\infty}.$$
Then since the sequence $(Q_{i,m:\infty})_m$ goes to $0$ and the two above maximum are taken on finite sets, the condition (\ref{eq:cond:Uboundary}.$b$) is fulfilled.\\
By \Cref{lem:2.8}, $\Phi^{-1}(\bar{U}^-,\bar{U}^+)$ is a transition-rate matrix satisfying Assumptions \ref{assum:second_index} and \ref{assum:first_index}.\\
It remains to prove the optimality of the matrix $\Phi^{-1}(\bar{U}^-,\bar{U}^+)$. Let $\widetilde{Q}$ be a transition-rate matrix satisfying Assumptions \ref{assum:second_index} and \ref{assum:first_index} and suppose that there exists $(\ell,m) \in \mathbb{N}^2$ such that
\begin{equation}
\label{eq:contradi1}
\widetilde{Q}_{\ell,0:m} > \Phi^{-1}(\bar{U}^-,\bar{U}^+)_{\ell,0:m}.
\end{equation}
Two cases must be considered.
\begin{itemize}
\item If $m < \ell$, with \Cref{lem:bijec}, $\Phi^{-1}(\bar{U}^-,\bar{U}^+)_{\ell,0:m} = \bar{U}^-_{\ell,m} = \min\limits_{\ell' \in \{ m+1, \ldots, \ell \} } f_{\ell',m }^-$.\\
Thus there exists an integer $\ell_0 \in [m+1,\ell]$ such that $\Phi^{-1}(\bar{U}^-,\bar{U}^+)_{\ell,0:m} = f_{\ell_0,m }^-$. Moreover, with Assumptions \ref{assum:second_index} and \ref{assum:first_index}, and since $\ell_0 > m$ we have
\begin{equation} \label{eq:contradi2}
\widetilde{Q}_{\ell,0:m} \leq \widetilde{Q}_{\ell_0,0:m} \leq f_{\ell_0,m }^-=\Phi^{-1}(\bar{U}^-,\bar{U}^+)_{\ell,0:m}.
\end{equation}
There is a contradiction between \eqref{eq:contradi1} and \eqref{eq:contradi2}.
\item If $m \geq \ell$, with \Cref{lem:bijec}, $\Phi^{-1}(\bar{U}^-,\bar{U}^+)_{\ell,0:m} = - \bar{U}^+_{\ell,m+1} = -\max\limits_{\ell' \in \{ 0, \ldots, \ell \} } f_{\ell',m+1}^+$.\\
Thus there exists an integer $\ell_0 \in [0,\ell]$ such that $\Phi^{-1}(\bar{U}^-,\bar{U}^+)_{\ell,0:m} = -f_{\ell_0,m+1 }^+$. Moreover, with Assumptions \ref{assum:second_index} and \ref{assum:first_index}, and since $\ell_0 \leq \ell \leq m$ we have
\begin{equation}
\label{eq:contradi3}
\widetilde{Q}_{\ell,0:m} \leq \widetilde{Q}_{\ell_0,0:m} = - \widetilde{Q}_{\ell_0,m+1:\infty} \leq -f_{\ell_0,m+1 }^+.
\end{equation}
Again, there is contradiction between \eqref{eq:contradi1} and \eqref{eq:contradi3}.
\end{itemize}
So we have proven by contradiction that
\begin{equation*}
\widetilde{Q}_{\ell,0:m}
\leq \Phi^{-1}(\bar{U}^-,\bar{U}^+)_{\ell,0:m}, \quad
\forall (\ell,m) \in \mathbb{N}^2.
\end{equation*}
This concludes the proof.
\end{proof}

\subsection{Uniform bounds of solutions of the master equation}

In this section, we derive from our main result (\Cref{thm:main_upper-bound}) a comparison on the probability distribution of processes in a family parameterized by a parameter $\theta$.\\

Let $\left(X^{\theta}\right)_{\theta \in \Theta}$ be a family of processes parameterized by a set $\Theta$ of suitable parameters. For any $\theta \in \Theta$, the master equation determines the probability distribution of $X^{\theta}$,
$$ \forall t \geq 0,~ \frac{\d p^{\theta}}{\d t}(t) = p^{\theta}(t) Q^{\theta}, $$
where $p^{\theta}(t) = (p^{\theta}_i(t))_{i \in \mathcal{S}}$ is a row vector such that $p^{\theta}_i(t) = \mathbb{P}\left(X^{\theta}_t = i | X^{\theta}_0\right)$.

In the following, the solution of the master equation linked to the upper-bounding process $Y$ (i.e. for the matrix $\widetilde{Q}$) will be denoted by $\widetilde{p}_t$. Then we can state an inequality for the probability distributions of the family $\left(X^{\theta}\right)_{\theta \in \Theta}$ and of $Y$.

\begin{thm}\label{thm:sol_cme_upper}
Assume that for any $\theta \in \Theta$, the pair $\left(Q^{\theta}, \widetilde{Q}\right)$ satisfies the Assumptions \ref{assum:second_index} and \ref{assum:first_index}.\\
Assume further that
$$ \forall \ell \in \mathbb{N},~ \widetilde{p}_{0:\ell}(0) \leq \inf_{\theta \in \Theta} p^{\theta}_{0:\ell}(0). $$
Then for any $t \geq 0$ and $\ell \in \mathbb{N}$,
\begin{equation}\label{eq:ineq_cme_upper}
\tilde{p}_{0:\ell}(t) \leq \inf_{\theta \in \Theta} p^{\theta}_{0:\ell}(t).
\end{equation}
The above inequality means that the process $\tilde{p}$ keeps less mass in the $\ell$ first classes than $p^{\theta}$ that is the mass go faster to infinity for the process $\tilde{p}$.
\end{thm}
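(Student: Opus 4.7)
The plan is to recast \eqref{eq:ineq_cme_upper} as a statement of stochastic dominance and derive it from a pathwise coupling. Since $Y$ takes values in $\mathbb{N}$ with law $\widetilde{p}(t)$ and $p^\theta_{0:\ell}(t) = \mathbb{P}(\cl{X^\theta_t} \leq \ell)$, the target inequality is equivalent to
\[
\mathbb{P}(Y_t \leq \ell) \leq \mathbb{P}(\cl{X^\theta_t} \leq \ell), \quad \forall \theta \in \Theta,\ \forall \ell \in \mathbb{N},\ \forall t \geq 0;
\]
in other words, $Y_t$ stochastically dominates $\cl{X^\theta_t}$, uniformly in $\theta$. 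The infimum over $\theta$ appears only at the last line, so it suffices to fix one arbitrary $\theta \in \Theta$ and prove the pointwise inequality for it.

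Fix such a $\theta$. The strategy is to build, on a single probability space, processes $X^\theta$ and $Y$ with the prescribed marginal laws (driven by $Q^\theta$ and $\widetilde{Q}$) satisfying $\cl{X^\theta_t} \leq Y_t$ almost surely for every $t \geq 0$. The hypothesis $\widetilde{p}_{0:\ell}(0) \leq p^\theta_{0:\ell}(0)$ for every $\ell$ is exactly the stochastic dominance of $Y_0 \sim \widetilde{p}(0)$ over the $\mathbb{N}$-valued variable $\cl{X^\theta_0}$; a standard quantile coupling driven by a single uniform random variable produces a pair $(L, Y_0)$ on a common space with $L \leq Y_0$ a.s., where $L$ is distributed as $\cl{X^\theta_0}$ and $Y_0 \sim \widetilde{p}(0)$. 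Drawing $X^\theta_0$ inside $S_L$ with the conditional probabilities $p^\theta_j(0)/p^\theta_L(0)$ for $j \in S_L$ (on the full-probability event $p^\theta_L(0) > 0$) then yields an initial pair with the correct marginal laws and $\cl{X^\theta_0} = L \leq Y_0$ a.s.

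For the dynamical step, conditionally on $(X^\theta_0, Y_0) = (i, \ell)$ with $\cl{i} \leq \ell$, I would invoke the joint construction underlying \Cref{thm:main_upper-bound}: Lemmas \ref{lem:cond:R}--\ref{lem:existence:R''} supply a transition-rate matrix $R$ on $\mathcal{S} \times \mathbb{N}$ with marginals $Q^\theta$ and $\widetilde{Q}$ and which forbids any transition into $\{(j,m) : \cl{j} > m\}$, so the associated CTMC started at $(i,\ell)$ never leaves $\{\cl{X^\theta_t} \leq Y_t\}$. Averaging this construction against the joint initial law produced above delivers the sought coupling, and taking expectations of $\mathbf{1}_{\{Y_t \leq \ell\}} \leq \mathbf{1}_{\{\cl{X^\theta_t} \leq \ell\}}$ gives $\widetilde{p}_{0:\ell}(t) \leq p^\theta_{0:\ell}(t)$; an infimum over $\theta$ then concludes.

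The only delicate ingredient is the initial-time lift from the distributional inequality to an a.s.\ pointwise one, but this is the elementary $\mathbb{N}$-valued case of Strassen's theorem (realized explicitly by inverse-CDF transforms of a single uniform), so I do not expect any serious obstacle beyond checking measurability and verifying that \Cref{thm:main_upper-bound} may be invoked conditionally on random initial conditions.
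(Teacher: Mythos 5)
Your proposal is correct and follows essentially the same route as the paper: fix $\theta$, invoke the coupling of \Cref{thm:main_upper-bound} to get $\cl{X^\theta_t} \leq Y_t$ almost surely, deduce the event inclusion $\{Y_t \leq \ell\} \subseteq \{\cl{X^\theta_t} \leq \ell\}$, and take the infimum over $\theta$ at the end. The one place you go beyond the paper's (terser) argument is in spelling out the quantile/Strassen coupling of the initial laws needed so that $Y_0 \sim \widetilde{p}(0)$ and $\cl{X^\theta_0} \leq Y_0$ hold simultaneously --- a step the paper leaves implicit but which is exactly why the hypothesis $\widetilde{p}_{0:\ell}(0) \leq \inf_\theta p^\theta_{0:\ell}(0)$ is assumed.
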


\textbf{Remarks:}\begin{itemize}
\item If the set $\Theta$ is a singleton, \Cref{thm:sol_cme_upper} is a weaker result than \Cref{thm:main_upper-bound} since it provides an inequality between the probability distributions $p$ and $\tilde{p}$.\\
However, \Cref{thm:main_upper-bound} cannot be generalised to a family of processes $\left(X^{\theta}\right)_{\theta \in \Theta}$.
\item Since $\widetilde{p}$ and $p^{\theta}$ are probability distributions, the inequality (\ref{eq:ineq_cme_upper}) can be rewritten in terms of the distribution tails:
$$ \forall t \geq 0,~ \forall \ell \in \mathbb{N},~ \inf_{\theta \in \Theta} p^{\theta}_{\ell:\infty}(t) \leq \widetilde{p}_{\ell:\infty}(t). $$
\end{itemize}

\begin{proof}
Let $\theta \in \Theta$, with \Cref{thm:main_upper-bound}, for any $t \geq 0$,
$$ \cl{X^{\theta}_t} \leq Y_t \text{ a.s}. $$
So for any $\ell \in \mathbb{N}$, we have
$$ \left\{Y_t \leq \ell \right\} \subseteq \left\{ \cl{X^{\theta}_t } \leq \ell \right\} .$$
By definition of the probability distributions,
$$ \forall \theta \in \Theta,~ \forall t \geq 0,~ \forall \ell \in \mathbb{N},~ \widetilde{p}_{0:\ell}(t) \leq p^{\theta}_{0:\ell}(t) .$$
By taking the infimum on the set $\Theta$, we obtain the result.
\end{proof}

\section{Construction of a lower-bounding process}\label{sec:lower}

In this section, by adapting proofs and formulas, we give simple conditions on the transition-rate matrix $\widetilde{Q}$ of a lower-bounding process and then construct such a matrix.

To be able to construct the lower-bounding process, the following hypothesis on the transition-rate matrix $Q$ is necessary:

\begin{equation}\label{eq:hyp_supp_lower}
\forall m \in \mathbb{N},~\exists B_m \geq 0,~ \forall i \in \mathcal{S},~ \cl{i} > m \Rightarrow Q_{i,m} \leq B_m.
\end{equation}
This hypothesis says that the rate of all reactions leading to a decrease of classes and ending in a class $m$ are uniformly bounded with respect to the starting class. It is directly verified when the number of reactions is finite.

\subsection{Main result}

\begin{thm} \label{thm:main_lower-bound}
Let $Q \in \trmat{\mathcal{S}}$ satisfying \Cref{eq:hyp_supp_lower} and let $\widetilde{Q} \in \trmat{\mathbb{N}}$. Let $p^0$ be a distribution on $S$. Assume the two following properties:
\begin{itemize}
\item[] \begin{enumerate}[label = \textbf{\upshape (B\arabic*)}, ref = \textbf{\upshape B\arabic*}
]
\item \label{assum:second_index_lower}
$\forall \ell$, $m \in \mathbb{N}$,~ $\forall i \in S_{\ell}$,~ $\widetilde{Q}_{\ell,0:m} \geq Q_{i,0:m}$,
\item \label{assum:first_index_lower}
$\forall \ell$, $m \in \mathbb{N}$ such that $\ell \neq m$,~ $\widetilde{Q}_{\ell+1,0:m} \leq \widetilde{Q}_{\ell,0:m}$.
\end{enumerate}
\end{itemize}
Then there exist two continuous-time Markov chains $X=(X_t)_{t \geq 0}$ and $Y= (Y_t)_{t \geq 0}$, defined on $\mathcal{S}$ and $\mathbb{N}$ respectively, with transition-rate matrices $Q$ and $\widetilde{Q}$, such that $X_0$ is distributed according to $p^0$, and such that $Y$ is a lower-bounding process of $X$.
\end{thm}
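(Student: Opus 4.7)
The plan is to mirror the proof of \Cref{thm:main_upper-bound} step by step, reversing the direction of the class ordering throughout. I would again realise $X$ and $Y$ as the two marginals of a single continuous-time Markov chain on $\mathcal{S}\times\mathbb{N}$ whose transition-rate matrix $R$ encodes a coupling. To start, I would state an analog of \Cref{lem:cond:R} in which the ``forbidden jump'' condition $(c)$ is replaced by
\[
\cl{i} \geq \ell \text{ and } \cl{j} < m \;\Longrightarrow\; R_{(i,\ell),(j,m)} = 0.
\]
An initial condition satisfying $\cl{X_0}\geq Y_0$ then cannot be violated by any admissible transition, so the inequality $Y_t\leq \cl{X_t}$ propagates to all $t\geq 0$.

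The reductions from $R$ to $R'$ and from $R'$ to $R''$ given by \Cref{lem:cond:R'} and \Cref{lem:cond:R''} require essentially no modification: the translation by $M_{i,\ell}$ and the factorisation through an $\mathbb{N}$-indexed third coordinate do not interact with the orientation of the support condition. The only change is to replace ``$\cl{i}\leq \ell$ and $\cl{j}>m$'' by ``$\cl{i}\geq \ell$ and $\cl{j}<m$'' in the corresponding statements, and the proofs transfer verbatim.

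The heart of the argument is therefore to build, for each $(i,\ell)\in \mathcal{S}\times \mathbb{N}$ with $\cl{i}\geq \ell$, a nonnegative matrix $R''_{(i,\ell),(\cdot,\cdot)}$ with prescribed marginals $a=(Q'_{(i,\ell),k})_k$ and $b=(\widetilde{Q}'_{(i,\ell),m})_m$ supported on $\{(k,m):k\geq m\}$. This calls for a ``dual'' transport result: if $a,b\in\seq{}$ satisfy $a_{0:\infty}=b_{0:\infty}$ and $a_{0:k}\leq b_{0:k}$ for every $k$, then there exists $\pi\in\seqseq{}$ with marginals $a,b$ and $\pi_{k,m}=0$ whenever $k<m$. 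Rather than redo the whole construction of Subsection~\ref{subsec:transport}, I would deduce this from \Cref{prop:transportation} by swapping roles: the pair $(a',b')=(b,a)$ satisfies the original hypotheses, and the sought $\pi$ is obtained by transposing the resulting $\pi'$, namely $\pi_{k,m}:=\pi'_{m,k}$.

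The main obstacle is then to verify the cumulative inequality $a_{0:k}\leq b_{0:k}$ for this specific choice of $a,b$ when $\cl{i}\geq \ell$, by a case split on the position of $k$ analogous to Case~1 of \Cref{lem:existence:R''}. When $k<\ell$ or $k\geq \cl{i}$, the chain
\[
Q_{i,0:k}\leq \widetilde{Q}_{\cl{i},0:k}\leq \widetilde{Q}_{\cl{i}-1,0:k}\leq \cdots \leq \widetilde{Q}_{\ell,0:k}
\]
follows by applying \ref{assum:second_index_lower} at class $\cl{i}$ and then iterating \ref{assum:first_index_lower}; along the iteration the first index stays strictly on one side of $k$, so the restriction $\ell\neq m$ in \ref{assum:first_index_lower} is never triggered, and the $M_{i,\ell}$ indicator terms match thanks to $\cl{i}\geq \ell$. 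The delicate middle range $\ell\leq k<\cl{i}$ is the genuine difficulty, since the diagonal shift enters only in $b_{0:k}$ and \ref{assum:first_index_lower} cannot be iterated across the diagonal. Here I would rely on the elementary bounds $Q_{i,0:k}\leq -Q_{i,i}$ (a sub-sum of off-diagonal leaving rates from $i$, valid because $k<\cl{i}$) and $\widetilde{Q}_{\ell,0:k}\geq \widetilde{Q}_{\ell,\ell}$ (valid because $k\geq \ell$), combined with the identity $M_{i,\ell}=-Q_{i,i}-\widetilde{Q}_{\ell,\ell}$, to conclude $b_{0:k}\geq -Q_{i,i}\geq a_{0:k}$. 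The complementary case $\cl{i}<\ell$ is trivial, as the support condition is vacuous and the product-form construction of Case~2 of \Cref{lem:existence:R''} transfers unchanged. Hypothesis \eqref{eq:hyp_supp_lower} is in fact a necessary compatibility condition for the assumed $\widetilde{Q}$ to exist (it is implied by \ref{assum:second_index_lower}, \ref{assum:first_index_lower}, and $\widetilde{Q}\in\trmat{\mathbb{N}}$, using that $\widetilde{Q}_{\ell,0:m}$ is nonincreasing in $\ell$ for $\ell>m$), and will play a more substantial role when constructing an optimal $\widetilde{Q}$ from $Q$ in the sequel.
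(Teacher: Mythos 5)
Your proposal is correct and carries out exactly the adaptation that the paper leaves implicit (it states only that the lower-bound results follow ``by adapting proofs and formulas'' from \Cref{sec:upper}): the reversed support condition in the analog of \Cref{lem:cond:R}, the unchanged reductions through $R'$ and $R''$, and the case split establishing $a_{0:k}\leq b_{0:k}$ when $\cl{i}\geq\ell$ all match the intended argument, and your handling of the middle range $\ell\leq k<\cl{i}$ via $Q_{i,0:k}\leq -Q_{i,i}$ and $\widetilde{Q}_{\ell,0:k}\geq\widetilde{Q}_{\ell,\ell}$ mirrors Case~(b) of \Cref{lem:existence:R''} correctly. Your transposition trick for obtaining the dual transport statement from \Cref{prop:transportation}, and your observation that \eqref{eq:hyp_supp_lower} is implied by \ref{assum:second_index_lower}--\ref{assum:first_index_lower} (with $B_m=\widetilde{Q}_{m+1,0:m}$) and is only genuinely needed for the construction of the optimal $\widetilde{Q}$ in \Cref{subsec:construction_Qtilde_lower}, are both accurate refinements rather than departures from the paper's route.
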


As in the previous section, we can give an interpretation of the two assumptions. Firstly note that the Assumption \ref{assum:first_index_lower} is the same as Assumption \ref{assum:first_index}. Conversely, the inequality is inverted between Assumptions \ref{assum:second_index} and \ref{assum:second_index_lower}.

\begin{rem} \label{rem:assumptionB1}
Assumption \ref{assum:second_index_lower} essentially says that $Y$ decreases at a faster rate than $X$ moves from states to other states that belong to a lower class. Vice-versa, $Y$ increases at a smaller rate than $X$ moves from states to other states that belong to a higher class. This can be formalized as follows.
\begin{itemize}
\item When $m < \ell$, the assumption is equivalent to:
\begin{equation*}
\widetilde{Q}_{\ell,0:m}
\geq \min_{i \in S_\ell} \, Q_{i,0:m}.
\end{equation*}
The quantity $\widetilde{Q}_{\ell,0:m}$ is the rate at which $Y$ moves from $\ell$ to classes that are smaller or equal to $m$ and the quantity $\min_{i \in S_\ell} \, Q_{i,0:m}$ is the minimal rate at which $X$ transitions from a state in class $\ell$ to another state in a class lower or equal to $m$.
\item When $m \geq \ell$, the assumption is equivalent to:
\begin{equation*}
\widetilde{Q}_{\ell,m+1:\infty}
\leq \max_{i \in S_\ell}\, Q_{i,m+1:\infty}.
\end{equation*}
The quantity $\widetilde{Q}_{\ell,m+1:\infty}$ is the rate at which $Y$ moves from $\ell$ to classes that are are strictly greater than $m$ and the quantity $\max_{i \in S_\ell}\, Q_{i,m+1:\infty}$ is the maximal rate at which $X$ transitions from a state in class $\ell$ to another state in a class strictly higher than $m$.
\end{itemize}
\end{rem}

The interpretation of the Assumption \ref{assum:first_index_lower} has already been stated in Remark \ref{rem:assumptionA2} since this assumption is the same as Assumption \ref{assum:first_index}.

\subsection{Construction of a lower-bounding transition rate-matrix}\label{subsec:construction_Qtilde_lower}

By adapting the approach and formulas of \Cref{subsec:construction_Qtilde}, we find a transition-rate matrix $\widetilde{Q}$ that satisfies Assumptions \ref{assum:second_index_lower} and \ref{assum:first_index_lower}.\\

With the same notations as in \Cref{subsec:construction_Qtilde}, we define the two doubly-indexed sequences $\widehat{f}^- = \big(\widehat{f}_{\ell,m}^- \big)_{(\ell,m) \in \Omega^-}$ and $\widehat{f}^+ = \big(\widehat{f}_{\ell,m}^+ \big)_{(\ell,m) \in \Omega^+}$ as
\begin{align}
\widehat{f}_{\ell,m}^-= {} \max_{i \in S_{\ell}} \, Q_{i,0:m} \quad \text{and} \quad
\widehat{f}_{\ell,m}^+ = {} \min_{i \in S_{\ell}} \, Q_{i,m:\infty}. \label{eq:def_f_lower}
\end{align}

We define next two candidates $\hat{U}^-$ and $\hat{U}^+$ for the image of the matrix $\widetilde{Q}$ by the function $\Phi$ (see \Cref{lem:bijec}) as follows:
\begin{equation}\label{eq:optimal_lower}
\begin{array}{rll}
\hat{U}^-_{\ell,m}
= & \sup\limits_{\ell' \in \{ \ell, \ell+1, \ldots\}} \widehat{f}_{\ell',m}^- & \quad \forall (\ell,m) \in \Omega^-, \\
\hat{U}^+_{\ell,m} = & \min\limits_{\ell' \in \{ \ell, \ldots, m-1 \} } \widehat{f}_{\ell',m}^+ & \quad \forall (\ell,m) \in \Omega^+.
\end{array}
\end{equation}

\begin{prop}
The quantities $\hat{U}^-$ and $\hat{U}^+$ are well-defined and are in the set $\mathbf{U}$.\\
Then $\Phi^{-1}(\hat{U}^-,\hat{U}^+)$ is a transition-rate matrix satisfying Assumptions \ref{assum:second_index_lower} and \ref{assum:first_index_lower}.\\
Moreover, this transition-rate matrix $\Phi^{-1}(\hat{U}^-,\hat{U}^+)$ is optimal in the following sense: any other transition-rate matrix $\widetilde{Q}$ satisfying \Cref{eq:hyp_supp_lower} and Assumptions \ref{assum:second_index_lower} and \ref{assum:first_index_lower} is necessarily such that
\begin{equation*}
\Phi^{-1}(\hat{U}^-, \hat{U}^+)_{\ell,0:m} \leq \widetilde{Q}_{\ell,0:m}, \quad
\forall (\ell,m) \in \mathbb{N}^2.
\end{equation*}
\end{prop}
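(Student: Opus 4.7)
The plan is to closely follow the proof of \Cref{prop:optimal_upper}, inverting signs and swapping \emph{min} with \emph{sup} where appropriate, with one new subtlety due to the fact that $\hat{U}^-$ is now defined as a supremum over an infinite set of indices.

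First I would establish well-definedness. For $\hat{U}^+_{\ell,m}$, the minimum is taken over the finite nonempty set $\{\ell,\ldots,m-1\}$, so there is nothing to prove. For $\hat{U}^-_{\ell,m}$, I invoke Hypothesis \eqref{eq:hyp_supp_lower}: for $\ell' > m$ and $i \in S_{\ell'}$, one has $\cl{i} = \ell' > k$ for each $k \in \{0,\ldots,m\}$, hence $Q_{i,k} \leq B_k$ and $\widehat{f}^-_{\ell',m} \leq \sum_{k=0}^m B_k$ uniformly in $\ell'$, which makes the supremum finite. I would then state the lower-bound analog of \Cref{lem:2.8} (whose proof mimics the case disjunction used in Remarks \ref{rem:assumptionA1} and \ref{rem:assumptionB1}): Assumption \ref{assum:second_index_lower} is equivalent to $U^-_{\ell,m} \geq \widehat{f}^-_{\ell,m}$ on $\Omega^-$ and $U^+_{\ell,m} \leq \widehat{f}^+_{\ell,m}$ on $\Omega^+$, while Assumption \ref{assum:first_index_lower} coincides with Assumption \ref{assum:first_index} and yields exactly the monotonicity conditions \eqref{eq:condUA2}. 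Each of the required conditions \eqref{eq:cond:Umono}, \eqref{eq:cond:Uboundary}, the lower-bound analog of \eqref{eq:condUA1}, and \eqref{eq:condUA2} is then checked by short direct computations, relying on the observations that $m \mapsto \widehat{f}^-_{\ell',m}$ is non-decreasing on $\{0,\ldots,\ell'-1\}$ (because off-diagonal entries of $Q$ are nonnegative), that $m \mapsto \widehat{f}^+_{\ell',m}$ is non-increasing on $\{\ell'+1,\ldots\}$, and that $\widehat{f}^+_{\ell,m} \to 0$ as $m \to \infty$ by finiteness of $S_\ell$ and summability of the rows of $Q$. Combined with \Cref{lem:bijec}, this shows that $\Phi^{-1}(\hat{U}^-,\hat{U}^+)$ is a transition-rate matrix fulfilling Assumptions \ref{assum:second_index_lower} and \ref{assum:first_index_lower}.

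For optimality, I argue by contradiction as in \Cref{prop:optimal_upper}. Assume that $\widetilde{Q}$ satisfies the hypotheses and that $\Phi^{-1}(\hat{U}^-,\hat{U}^+)_{\ell,0:m} > \widetilde{Q}_{\ell,0:m}$ for some $(\ell,m)$, and split into two cases via \Cref{lem:bijec}. If $m < \ell$, then $\Phi^{-1}(\hat{U}^-,\hat{U}^+)_{\ell,0:m} = \hat{U}^-_{\ell,m} = \sup_{\ell' \geq \ell} \widehat{f}^-_{\ell',m}$; since the supremum is not assumed to be attained, I pick for each $\varepsilon > 0$ an index $\ell_\varepsilon \geq \ell$ with $\widehat{f}^-_{\ell_\varepsilon,m} > \hat{U}^-_{\ell,m} - \varepsilon$, iterate Assumption \ref{assum:first_index_lower} from $\ell$ up to $\ell_\varepsilon$ (legitimate because every intermediate index is strictly greater than $m$), and apply Assumption \ref{assum:second_index_lower} at $\ell_\varepsilon$, giving $\widetilde{Q}_{\ell,0:m} \geq \widetilde{Q}_{\ell_\varepsilon,0:m} \geq \widehat{f}^-_{\ell_\varepsilon,m} > \hat{U}^-_{\ell,m} - \varepsilon$; letting $\varepsilon \to 0$ yields the desired contradiction. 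If $m \geq \ell$, then $\Phi^{-1}(\hat{U}^-,\hat{U}^+)_{\ell,0:m} = -\hat{U}^+_{\ell,m+1}$ with the minimum attained at some $\ell_0 \in \{\ell,\ldots,m\}$; I iterate Assumption \ref{assum:first_index_lower} in its equivalent tail form between $\ell$ and $\ell_0$ (again permissible because all intermediate indices are strictly less than $m$) to obtain $\widetilde{Q}_{\ell,m+1:\infty} \leq \widetilde{Q}_{\ell_0,m+1:\infty}$, and combine this with Assumption \ref{assum:second_index_lower} at $\ell_0$, which in tail form reads $\widetilde{Q}_{\ell_0,m+1:\infty} \leq \widehat{f}^+_{\ell_0,m+1}$, yielding the contradiction after passing back from tails to partial sums.

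The main obstacle I anticipate is the $\varepsilon$-argument in Case~1 of the optimality proof, because the supremum defining $\hat{U}^-$ may not be attained; everything else parallels \Cref{prop:optimal_upper} with straightforward sign adjustments, provided one carefully verifies at every step that the iteration of Assumption \ref{assum:first_index_lower} never uses the forbidden index $\ell'=m$.
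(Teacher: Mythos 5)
Your proposal is correct and follows essentially the same route as the paper, which establishes well-definedness of $\hat{U}^+$ (finite minimum) and of $\hat{U}^-$ (via the uniform bound from \eqref{eq:hyp_supp_lower}) and then simply defers the remaining verifications to ``an adaptation of the proof of \Cref{prop:optimal_upper}''. You supply the details of that adaptation explicitly --- notably the $\varepsilon$-argument needed because the supremum defining $\hat{U}^-$ need not be attained --- which is exactly the right way to carry it out.
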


\begin{proof}
Since $\hat{U}^+$ is a minimum on a finite set, it is well-defined. With \Cref{eq:hyp_supp_lower},
$$ \forall (\ell,m) \in \Omega^-,~ \widehat{f}_{\ell,m}^- \leq B_m, $$
where $B_m$ is independent of $\ell$. Thus $\hat{U}^-$ is well-defined.\\
The rest of the proof is an adaptation of the proof of \Cref{prop:optimal_upper}.
\end{proof}

\section{Applications}\label{sec:appli}

In this section, we will detail how different properties of the original process can be derived from the lower and upper-bounding processes. As an example, we will use the following chemical reaction network composed of three species $X_1$, $X_2$ and $X_3$

$$ \left.
\begin{array}{rclrclrclc}
\multirow{2}{*}{$\emptyset$} & \multirow{2}{*}{$\stackrel{b_1(x_2+x_3)+b_2}{\longrightarrow}$} & \multirow{2}{*}{$X_1$ \qquad }& 2X_1 & \stackrel{\alpha x_1 (x_1-1)}{\longrightarrow} & 3X_2 \phantom{qqqq}  & \multirow{2}{*}{$X_i$} & \multirow{2}{*}{$\stackrel{d_i x_i}{\longrightarrow}$} & \multirow{2}{*}{$\emptyset$} & \multirow{2}{*}{~ for $1\leq i\leq 3$} \\
& & & X_1 + X_2 & \stackrel{\beta x_1 x_2}{\longrightarrow} & X_3 & & & & \\
\end{array}
\right. $$
with all parameters ($b_1$, $b_2$, $\alpha$, $\beta$, $(d_i)_{1\leq i\leq 3}$) strictly positive. In the following, we will call $\theta$ the parameter vector.\\

This network is composed of six reactions: one reaction creating the species $X_1$, two second-order reactions for the creation of species $X_2$ and $X_3$, and three reactions for the degradation of each species. The dynamics of each species depend on those of the other two due to the reaction rates chosen. In particular, the creation of the species $X_1$ is positively regulated by the sum of the number of molecules $X_2$ and $X_3$.

The motivation for choosing this reaction network as an example is that it poses significant challenges: the non-linearity of the reaction propensities and the dimensionality of the network make it difficult to analyze the dynamics and to establish properties of the CTMC. Notably, the creation of  molecules of $X_2$ at a quadratic rate implies that molecule numbers could potentially diverge to infinity in  finite time for some choices of model parameters.\\

Let $X$ be the continuous-time Markov chain on $\mathbb{N}^3$ associated to this network. In the following, we will apply our method with different sets of classes in order to derive results on the original process. For the upper-bounding processes, we will consider the two following sets of classes

\begin{equation}\label{eq:def_upper_classes}
\begin{array}{cccc}
\forall \ell \in \mathbb{N},~ &S^{+}_{\ell} = \{ 2x_1 + x_2 + x_3 = \ell \} &\text{ and }&  \tilde{S}^{+}_{\ell} = \{ x_1 + x_2 + x_3 = \ell \}.
\end{array}
\end{equation}

The partition $(\tilde{S}^{+}_{\ell})_{\ell \in \mathbb{N}}$ is the naive choice of classes that groups states according to the total number of molecules. Contrary to that, the partition $(S^{+}_{\ell})_{\ell \in \mathbb{N}}$ has a linear form too but gives more weight to the molecules of species $X_1$. The main motivation for weighting more the species $X_1$ is to take into account the structure of the reaction network by ensuring that the reaction $2X_1 \rightarrow 3X_2$ decreases classes.\\
For the lower-bounding process, we will consider only one set of classes
\begin{equation}\label{eq:def_lower_classes}
\begin{array}{cc}
\forall \ell \in \mathbb{N},~ &S^{-}_{\ell} = \{ 2x_1 + 2x_2 + 5x_3 = \ell \}.
\end{array}
\end{equation}
In the same spirit as for the two previous partitions, the weights in the partition $(S^{-}_{\ell})_{\ell \in \mathbb{N}}$ are chosen such that the two reactions $2X_1 \rightarrow 3X_2$ and $X_1+X_2 \rightarrow X_3$ increase classes.\\

In a first part, we will illustrate the usefulness of our method to obtain conditions on the parameters for non-explosion and ergodicity of the multi-dimensional CTMC $X$. Then, in a second part, we will compare the optimal upper-bounding processes obtained with the two partitions $(S^{+}_{\ell})_{\ell \in \mathbb{N}}$ and $(\tilde{S}^{+}_{\ell})_{\ell \in \mathbb{N}}$ introduced before which then allows us to discuss the importance of the choice of the classes for the coupling and for the study of the stationary distribution. Finally, we will bound the error made in numerically solving the master equation on a truncated state space.

\subsection{Asymptotic behavior}\label{subsec:asymptotic_behavior}

\subsubsection{Theoretical results}

Let us recall some links between the asymptotic behavior of a CTMC $X$ and hitting times. Let $X$ be an $\textbf{irreducible}$ continuous-time Markov chain on a state space $\mathcal{S}$.\\

The sequence $J = (J_n)_{n \in \mathbb{N}}$ of jump times of $X$ are defined by
$$J_0 = 0 \text{~ and for any $n \geq 1$, } J_n = \inf \{t \geq J_{n-1},~ X_t \ne X_{J_{n-1}} \},$$
where $\inf \emptyset = \infty$ by convention.\\
The process $X$ is explosive if and only if $\mathbb{P}(\sup_n J_n < \infty) > 0$.\\

For any state $i \in \mathcal{S}$, the first hitting time of $i$ is the first time at which the chain reaches state $i$ (excluding time $0$), that is
$$ T^{X}_i = \inf\left(t>0, X_t = i \right), $$
with the convention that $T^{X}_i = \infty$ if the chain never reaches state $i$.\\

The asymptotic behavior of the irreducible continuous-time Markov chain $X$ can be described with the hitting times:
\begin{enumerate}
\item $X$ is transient $\Longleftrightarrow$ there exists $i \in \mathcal{S}$ such that $\mathbb{P}\left(T^{X}_i=\infty\right) >0 $.
\item $X$ is recurrent $\Longleftrightarrow$ there exists $i \in \mathcal{S}$ such that $\mathbb{P}\left(T^{X}_i=\infty\right)=0 $.\\
We distinguish between two types of recurrent processes:
\begin{itemize}
\item $X$ is null-recurrent $\Longleftrightarrow$ there exists $i \in \mathcal{S}$ such that $\mathbb{E}\left(T^{X}_i \right) =\infty $.
\item $X$ is positive recurrent $\Longleftrightarrow$ there exists $i \in \mathcal{S}$ such that $\mathbb{E}\left(T^{X}_i \right) < \infty $.
\end{itemize}
\end{enumerate}
Since $X$ is irreducible, the existence of a stationary distribution is equivalent to positive recurrence. Note that only transient process can be explosive, so a non-explosive process can be transient, null-recurrent or positive recurrent.\\

In the following, the coupling presented in the previous sections is used in order to determine the asymptotic behavior of $X$. Let $Z$ be a lower-bounding process for classes $\left(S^{-}_\ell\right)_{\ell \in \mathbb{N}}$ and $Y$ a upper-bounding process for classes $\left(S^{+}_\ell\right)_{\ell \in \mathbb{N}}$ (see \Cref{sec:upper,sec:lower}), that is for any $t \geq 0$,
$$
\left\{
\begin{array}{cccc}
Z_t &\leq& \text{cl}^{+}(X_t), & \text{for $\left(S^{-}_{\ell}\right)_{\ell \in \mathbb{N}}$} \\
\text{cl}^{-}(X_t) &\leq& Y_t, &  \text{for $\left(S^{+}_{\ell}\right)_{\ell \in \mathbb{N}}$} \\
\end{array}
\right. ,
$$
where the functions $\text{cl}^{+}$ (resp. $\text{cl}^{-}$) is the function defined in \Cref{def:fct_cl} for classes $\left(S^{+}_\ell\right)_{\ell \in \mathbb{N}}$ (resp. for $\left(S^{-}_\ell\right)_{\ell \in \mathbb{N}}$).

\begin{thm}\label{thm:statio_distri}
Suppose that processes $Y$ and $Z$ are irreducible. Then according to the asymptotic behavior of the lower and upper-bounding processes $Z$ and $Y$, we can deduce properties on the asymptotic behavior of the process $X$ presented in the \Cref{tab:asympto_behavior}:

\begin{table}[H]
\centering
\settowidth\rotheadsize{Recurrent/}
\begin{NiceTabular}{wc{0.8cm}wc{2.9cm}wc{2.4cm}ccc}[hvlines]
\CodeBefore
\cellcolor[HTML]{ADD8E6}{3-3,4-4,5-4,6-5,7-5,8-6}
\cellcolor[HTML]{000000}{3-4,6-6,7-6}
\rectanglecolor[HTML]{000000}{3-5}{5-6}
\Body
\hline
\Block{2-2}{\diagbox{\quad \rule[-3mm]{0pt}{10mm}Lower process $Z_t$}{\rule[-10mm]{0pt}{15mm}Upper process $Y_t$ \quad}} && \Block{1-2}{Transient} && \Block{1-2}{Recurrent} \\
\cline{3-6}
& & Explosive & Non-explosive & Null-recurrent & Positive recurrent \\
\hline
\Block{3-1}{\rothead{\text{Transient}}} & Explosive & Explosive & & & \\
\cline{2-6}
& \Block{2-1}{Non-explosive} & \Block{2-1}{Transient \\ (Explosive or not)} & \Block{2-1}{Transient and \\ non-explosive} & & \\
& & & & \\
\hline
\Block{3-1}{\rothead{Recurrent}} & \Block{2-1}{Null-recurrent} & \Block{2-1}{Transient or \\ null-recurrent} & \Block{2-1}{Non-explosive} & \Block{2-1}{Null-recurrent} & \\
& & & & \\
\cline{2-6}
& Positive recurrent & No information & Non-explosive & Recurrent & Positive recurrent\\
\hline
\end{NiceTabular}
\caption{Asymptotic behavior of $X$ according to that of the lower and upper-bounding processes $Z$ and $Y$. Black regions represent impossible combinations. In the blue regions, the asymptotic behavior of $X$ can be fully deduced from the bounding processes. In the white regions, we only have access to partial information.}\label{tab:asympto_behavior}
\end{table}
\end{thm}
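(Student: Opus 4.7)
The plan is to reduce the whole table to a handful of one-sided implications relating $X$ to each of its bounding processes, and then to obtain every cell by intersecting the information coming from $Y$ and from $Z$. The two coupling theorems \Cref{thm:main_upper-bound,thm:main_lower-bound} place $(X,Y)$ and $(X,Z)$ on common probability spaces so that $\text{cl}^+(X_t)\le Y_t$ and $Z_t\le \text{cl}^-(X_t)$ for every $t\ge 0$ almost surely, with the prescribed marginal laws. Using these couplings and the fact that each partition has \emph{finite} classes, I would first establish six core implications: (a) $Y$ non-explosive $\Rightarrow$ $X$ non-explosive; (b) $Y$ recurrent $\Rightarrow$ $X$ recurrent; (c) $Y$ positive recurrent $\Rightarrow$ $X$ positive recurrent; (d) $Z$ explosive $\Rightarrow$ $X$ explosive; (e) $Z$ transient $\Rightarrow$ $X$ transient; (f) $X$ positive recurrent $\Rightarrow$ $Z$ positive recurrent (equivalently, by contraposition, $Z$ not positive recurrent $\Rightarrow$ $X$ not positive recurrent).

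Implications (a), (d), (e) follow essentially directly from the coupling together with the finiteness of the classes and standard facts on irreducible one-dimensional CTMCs on $\mathbb{N}$. For (a), if $Y_t$ stays bounded on a compact time interval then $\text{cl}^+(X_t)$ does too, confining $X$ there to a finite union of classes, which rules out explosion. For (d), explosion of $Z$ forces $\text{cl}^-(X_t)$ to become arbitrarily large in finite time, so $X$ visits arbitrarily high and hence infinitely many classes before the explosion time, and the finiteness of each class then entails that $X$ makes infinitely many jumps in finite time. For (e), transience of the irreducible one-dimensional $Z$ yields $Z_t\to\infty$ almost surely, hence $\text{cl}^-(X_t)\to\infty$, so $X$ leaves every finite set eventually. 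Implication (b) uses the finiteness of $S^+_0$: if $Y$ visits $0$ infinitely often, then $X$ visits the finite set $S^+_0$ infinitely often, and by pigeonhole at least one element of $S^+_0$ is visited infinitely often, which by irreducibility of $X$ gives recurrence.

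The two delicate implications are (c) and (f). For (c), I would start the coupling with $Y_0=0$ and hence $X_0\in S^+_0$. Positive recurrence of $Y$ gives a finite expected return time to $0$, which by the coupling inequality bounds the expected return time of $X$ to $S^+_0$. Finiteness of $S^+_0$ and irreducibility of $X$ then let one promote this into a finite expected return time to some specific state $i_0\in S^+_0$: a geometric-trials argument based on the strong Markov property at successive visits to $S^+_0$ yields $\mathbb{E}_{i_0}[T^X_{i_0}]<\infty$, which is equivalent to positive recurrence of the irreducible $X$. For (f), I would argue by contraposition through tightness. If $X$ is positive recurrent with stationary distribution $\mu$, initializing $X_0\sim\mu$ makes $\text{cl}^-(X_t)$ distributed as the push-forward $\pi$ of $\mu$ on $\mathbb{N}$ for every $t\ge 0$, and the coupling inequality $Z_t\le \text{cl}^-(X_t)$ then gives $\mathbb{P}(Z_t\le K)\ge \pi([0,K])$ uniformly in $t$, so the family $(Z_t)_{t\ge 0}$ is tight. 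An irreducible one-dimensional CTMC that is transient or null-recurrent satisfies $\mathbb{P}(Z_t=k)\to 0$ for every fixed $k$, contradicting tightness, so $Z$ must be positive recurrent.

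Once (a)--(f) are in hand, the table is filled by mechanical case analysis. Every blue cell comes from combining one implication on the $Y$-side with one on the $Z$-side and intersecting the conclusions; for instance, the cell $(Z$ null-recurrent, $Y$ null-recurrent$)$ yields $X$ recurrent by (b) and $X$ not positive recurrent by the contrapositive of (c), hence $X$ null-recurrent. Every black cell records a combination whose $Y$-side and $Z$-side conclusions are incompatible (e.g.\ $Z$ explosive with $Y$ non-explosive contradicts (d) against (a); $Z$ transient or explosive with $Y$ recurrent contradicts (e)/(d) against (b); $Z$ null-recurrent with $Y$ positive recurrent contradicts the contrapositive of (c) against (c) itself). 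The white cells simply keep the strongest one-sided conclusion available. The main obstacle I anticipate is making (c) and (f) rigorous, as both require transferring a quantitative return-time or tightness property across the coupling while reconciling the specific coupled initial distribution with the fact that positive recurrence is a property of the transition matrix rather than of any particular initial law; all the remaining steps are essentially bookkeeping.
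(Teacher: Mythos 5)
Your proposal is correct and follows the same overall strategy as the paper: both reduce the table to the same six one-sided implications (non-explosive/recurrent/positive recurrent transfer down from $Y$; explosive/transient/non-positive-recurrent transfer up from $Z$) and then fill each cell by intersecting the two conclusions. The difference lies in how the individual implications are justified. The paper argues entirely through hitting times: it claims the event inclusion $\{T^{X}_i=\infty\}\subseteq\{T^{Y}_0=\infty\}$ for $i\in S^{+}_0$, deduces $\mathbb{P}(T^{X}_i=\infty)\le\mathbb{P}(T^{Y}_0=\infty)$ and $\mathbb{E}(T^{X}_i)\le\mathbb{E}(T^{Y}_0)$ (and the mirror inequalities for $Z$), and reads off all six implications from the standard hitting-time characterizations of transience, recurrence and positive recurrence recalled just before the theorem. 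This is shorter than your route, but the inclusion $\{T^{Y}_0<\infty\}\subseteq\{T^{X}_i<\infty\}$ as written only guarantees that $X$ enters the \emph{set} $S^{+}_0$, not the particular state $i$, so it is literally valid only when $S^{+}_0$ is a singleton (as it happens to be in the paper's example). Your pigeonhole argument for recurrence and your geometric-trials argument at successive visits to $S^{+}_0$ for positive recurrence are precisely what is needed to close that gap in general, at the cost of more work. Likewise, your tightness argument for implication (f) proves the stronger statement ``$X$ positive recurrent $\Rightarrow Z$ positive recurrent,'' whereas the paper only needs and only proves ``$Z$ null-recurrent $\Rightarrow X$ not positive recurrent'' via $\mathbb{E}(T^{Z}_0)\le\mathbb{E}(T^{X}_{i'})$ (an inequality that, unlike its counterpart for $Y$, is unproblematic since reaching any $i'\in S^{-}_0$ forces $Z$ to hit $0$). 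In short: same skeleton, but your fleshing-out of (b), (c) and (f) is more careful than the paper's and handles non-singleton bottom classes, while the paper's hitting-time formulation is more economical.
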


\begin{proof}
The results on the asymptotic behavior of the process $X$ presented in the \Cref{tab:asympto_behavior} are equivalent to the following set of implications:
\begin{equation*}
\begin{cases}
\begin{array}{rcl}
Y \text{ non-explosive} & \Longrightarrow & X \text{ non-explosive,}\\
Y \text{ recurrent} & \Longrightarrow & X \text{ recurrent,}\\
Y \text{ positive recurrent} & \Longrightarrow & X \text{ positive recurrent.}
\end{array}
\end{cases}
\quad \quad
\begin{cases}
\begin{array}{rcl}
Z \text{ explosive} & \Longrightarrow & X \text{ explosive,}\\
Z \text{ transient} & \Longrightarrow & X \text{ transient,}\\
Z \text{ null-recurrent} & \Longrightarrow & X \text{ non positive recurrent.}
\end{array}
\end{cases}
\end{equation*}
Since $Y$ is an upper-bounding process of $X$, for any $i \in S^{+}_0$,
$\{T^{X}_i = \infty \} \Longrightarrow \{T^{Y}_0 = \infty\} $, so
\begin{equation}\label{eq:ineq_X_Y}
\mathbb{P}\left(T^{X}_i = \infty \right) \leq \mathbb{P}\left(T^{Y}_0 = \infty \right).
\end{equation}
The same inequality holds for the expectation,
\begin{align}
\mathbb{E}\left( T^{X}_i\right) &= \int_0^{\infty} \mathbb{P} \left(T^{X}_i \geq t \right) dt \nonumber \\
&\leq \int_0^{\infty} \mathbb{P} \left(T^{Y}_0 \geq t \right) dt \nonumber \\
&\leq \mathbb{E}\left( T^{Y}_0\right). \label{eq:ineq_X_Y_2}
\end{align}
With the same calculations, we obtain corresponding inequalities for the lower-bounding process $Z$:
\begin{equation}\label{eq:ineq_X_Z}
\forall i' \in S^{-}_0,~~ \begin{cases} \begin{array}{ccl}
\mathbb{P}\left(T^{Z}_0 = \infty \right) &\leq & \mathbb{P}\left(T^{X}_{i'} = \infty \right) \\
\mathbb{E}\left( T^{Z}_0\right) &\leq & \mathbb{E}\left( T^{X}_{i'}\right)
\end{array} \end{cases}.
\end{equation}
Then the implications on the left follow from \Cref{eq:ineq_X_Y,eq:ineq_X_Y_2}, since we suppose that processes $X$ and $Y$ are irreducible. The implications on the right follow from \Cref{eq:ineq_X_Z} since we suppose that processes $X$ and $Z$ are irreducible.
\end{proof}


\subsubsection{Application of the method to the toy model}\label{sssec:numeric_stationary}

In this subsection, we compute respectively the transition-rate matrices $Q^{-}$ and $Q^{+}$ of the optimal lower and upper-bounding processes $Z$ and $Y$ for two different sets of classes and then deduce results on the asymptotic behavior of the process $X$.\\

Let us start with the lower-bounding process $Z$ and recall the set of classes defined by \Cref{eq:def_lower_classes},
$$ \forall \ell \in \mathbb{N}, ~~ S^{-}_{\ell} = \{x \in \mathbb{N}^3,~ 2x_1 + 2x_2 + 5x_3 = \ell \}. $$

\begin{prop}\label{prop:matrix_lower}
Using the results of \Cref{sec:lower}, the transition-rate matrix $Q^{-}$ of the process $Z$ can be expressed as
$$ Q^{-} = \Phi(\hat{U}^-, \hat{U}^+), $$
where $(\hat{U}^-_{\ell, m})_{(\ell,m) \in \Omega^-}$ and $(\hat{U}^+_{\ell, m})_{(\ell,m) \in \Omega^+}$ are defined by \Cref{eq:optimal_lower}.\\
Since we will focus on the long-term behavior and to keep formulas simple, the only values that matter are $(\hat{U}^-_{\ell, m})_{(\ell,m) \in \Omega^-}$ and $(\hat{U}^+_{\ell, m})_{(\ell,m) \in \Omega^+}$ for $\ell$ large enough.\\
Thus, we have for any $\ell$ large enough,
\begin{equation*}
\hat{U}^-_{\ell, m}
= \begin{cases}
\begin{array}{cl}
0 & \text{ if $m < \ell -5$}, \\
\frac{d_3}{5}(m+5)  & \text{ if $\ell - 5 \leq m \leq l-3$}, \\
B(\theta)(m+2) & \text{ if $m \in \{\ell -2, \ell -1 \}$}.
\end{array}
\end{cases}
\end{equation*}
\begin{equation*}
\hat{U}^+_{\ell, m}
= \begin{cases}
\begin{array}{cl}
\frac{b_1}{5} \ell + A(\theta) & \text{ if $\ell + 1 \leq m \leq \ell+2$}, \\
0  & \text{ if $\ell +3 \leq m $}.
\end{array}
\end{cases}
\end{equation*}
with $A(\theta) = b_2 - \frac{(\alpha + \frac{2}{5}b_1)^2}{4\alpha}$ and $B(\theta) = \max\Big(\frac{d_1}{2}, \frac{d_2}{2}, \frac{d_3}{5} \Big)$.
\end{prop}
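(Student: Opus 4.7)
The plan is to apply directly the explicit construction from Subsection~\ref{subsec:construction_Qtilde_lower}, computing the sequences $\widehat{f}^\pm$ from \eqref{eq:def_f_lower} on the toy network and then taking the sup/min over $\ell'$ prescribed by \eqref{eq:optimal_lower}. As a preliminary, I would tabulate the six reactions and their effect on the class function $\cl{x} = 2x_1 + 2x_2 + 5x_3$: the increments are $+2$ for $\emptyset \to X_1$ and $2X_1 \to 3X_2$, $+1$ for $X_1 + X_2 \to X_3$, $-2$ for $X_1 \to \emptyset$ and $X_2 \to \emptyset$, and $-5$ for $X_3 \to \emptyset$. Thus from any $x \in S^-_{\ell'}$ only transitions to classes $\ell' - 5,\, \ell' - 2,\, \ell' + 1,\, \ell' + 2$ are possible, so $\widehat{f}^-_{\ell', m} = 0$ when $m < \ell' - 5$ and $\widehat{f}^+_{\ell', m} = 0$ when $m > \ell' + 2$; this immediately yields the trivial regimes ($\widehat{U}^-_{\ell,m}=0$ for $m<\ell-5$ and $\widehat{U}^+_{\ell,m}=0$ for $m\geq \ell+3$) in the claimed formulas.

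For the decreasing part I would compute $\widehat{f}^-_{\ell', m}$ by a case analysis on $m - \ell'$. When $m \in \{\ell'-5, \ldots, \ell'-3\}$ only $X_3 \to \emptyset$ contributes, so $\widehat{f}^-_{\ell', m} = d_3 \max_{x \in S^-_{\ell'}} x_3 = d_3\, \ell'/5$ up to a bounded defect arising from the congruence $2x_1 + 2x_2 + 5x_3 = \ell'$. When $m \in \{\ell'-2, \ell'-1\}$ the contribution is $d_1 x_1 + d_2 x_2 + d_3 x_3$, and maximising over $S^-_{\ell'}$ is the linear program $\max \sum_i (d_i/w_i)(w_i x_i)$ under $\sum_i w_i x_i = \ell'$ with $w = (2,2,5)$, whose value is $B(\theta)\, \ell'$, attained at the vertex concentrating all mass on the species realising $B(\theta)$. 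The sup $\sup_{\ell' \geq \ell}$ in \eqref{eq:optimal_lower} then selects the largest admissible $\ell'$ in each regime: $\ell' = m+5$ gives $(d_3/5)(m+5)$ in the first regime and $\ell' = m+2$ gives $B(\theta)(m+2)$ in the second; one still has to check that the linear values at these extremal $\ell'$ dominate the contributions from smaller $\ell'$, which is elementary.

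For the increasing part the non-trivial case is $m \in \{\ell'+1, \ell'+2\}$, where one must minimise $b_1(x_2+x_3) + b_2 + \alpha x_1(x_1-1)$ (plus $\beta x_1 x_2$ when $m = \ell'+1$) over $S^-_{\ell'}$. Substituting $x_2 = (\ell' - 2x_1 - 5x_3)/2$, the coefficient of $x_3$ becomes $-3b_1/2 < 0$, so I saturate $x_3$ and reduce to the one-dimensional quadratic $\alpha x_1^2 - (\alpha + 2b_1/5)\, x_1 + b_1\, \ell'/5 + b_2$, whose continuous minimiser is $x_1^\star = (\alpha + 2b_1/5)/(2\alpha)$ and whose optimal value is $b_1\, \ell'/5 + A(\theta)$. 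Setting $x_2 = 0$ kills the extra $\beta x_1 x_2$ term; for $\ell'$ large enough one can round $x_1$ to the nearest integer to $x_1^\star$ and pick a compatible $x_3 \in \mathbb{N}$ satisfying the constraint, so the discrete infimum matches the continuous one. The outer $\min_{\ell' \in \{\ell, \ell+1, \ldots, m-1\}}$ in the definition of $\widehat{U}^+$ does not change this leading expression since it is linear in $\ell'$ with the smallest value at $\ell' = \ell$, giving $\widehat{U}^+_{\ell,m} = b_1\, \ell/5 + A(\theta)$. The main obstacle I anticipate is precisely this quadratic integer minimisation: controlling the integrality gap so as to justify ``for $\ell$ large enough'' the identification of the discrete infimum with the continuous one, and handling the additional $\beta x_1 x_2$ term uniformly in the two sub-cases $m = \ell+1$ and $m = \ell+2$. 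Once the asymptotic forms of $\widehat{U}^\pm$ are in hand, plugging them into the bijection $\Phi^{-1}$ of \Cref{lem:bijec} yields the matrix $Q^- = \Phi^{-1}(\widehat{U}^-, \widehat{U}^+)$ immediately.
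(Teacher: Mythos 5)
Your proposal is correct and follows essentially the same route as the paper's proof in \Cref{ssec:app_1}: you tabulate the class increments of the six reactions, compute $\widehat{f}^{\pm}$ from \eqref{eq:def_f_lower}, solve the continuous relaxation of the quadratic minimisation (you eliminate $x_2$ and saturate $x_3$ by hand where the paper invokes \Cref{lem:technical_calculs}, but both land on the same minimiser with $x_2=0$ and the same value $\tfrac{b_1}{5}\ell'+A(\theta)$), and then apply \eqref{eq:optimal_lower}. If anything you are more explicit than the paper about the integrality gap in the discrete optimisation and about verifying that the extremal $\ell'$ dominates in the sup defining $\widehat{U}^-$, points the paper absorbs into ``for $\ell$ large enough.''
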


\begin{proof}
The proof of \Cref{prop:matrix_lower} is detailed in \Cref{ssec:app_1}.
\end{proof}

Note that the previous proposition gives the expression of the lower-bounding process only for large integers $\ell$ corresponding to large classes of the initial process. The computation of the exact expression of the transition rates is possible for any integer $\ell$ but leads to many case disjunctions depending on the parameter values. Since transition rates for small classes are not relevant for the study of the asymptotic behavior of the process, we refrain from listing them here.\\

Now, we construct the transition-rate matrix of the upper-bounding process $Y$. Let us recall the following set of classes defined by \Cref{eq:def_upper_classes},
$$ \forall \ell \in \mathbb{N}, ~~ S^{+}_{\ell} = \{ 2x_1 + x_2 + x_3 = \ell \}.$$

\begin{prop}\label{prop:upper_processes_def1}
Using the results of \Cref{sec:upper}, the transition-rate matrix $Q^{+}$ of the process $Y$ verifies, for any $\ell$ large enough,
\begin{center}
\begin{tabular}{cr}
$Q^{+}_{\ell,\ell-1} = \min(d_2,d_3)\ell - C(\theta)$, & $Q^{+}_{\ell,\ell} = -Q^{+}_{\ell,\ell-1}-Q^{+}_{\ell,\ell+2}$, \\
$Q^{+}_{\ell,\ell+2} = b_1 \ell +b_2$, & $Q^{+}_{\ell,m} = 0$ \text{ if $m \notin \{\ell-1, \ell+2\}$}, \\
\end{tabular}
\end{center}
where $C(\theta) = \frac{(\alpha+2\min(d_2,d_3)-d_1)^2}{4\alpha}\mathds{1}_{\alpha+2\min(d_2,d_3)-d_1 \geq 0}(\theta)$.\\
\end{prop}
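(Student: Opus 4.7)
The plan is to apply \Cref{prop:optimal_upper} directly to the six reactions of the toy model: compute the sequences $f^-, f^+$ from \eqref{eq:def_f_upper}, deduce $\bar{U}^-, \bar{U}^+$ via \eqref{eq:optimal_upper}, and finally read off $Q^+ = \Phi^{-1}(\bar{U}^-, \bar{U}^+)$ through \Cref{lem:bijec}.

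First, I would tabulate the class change of each reaction under $\text{cl}^-(x_1, x_2, x_3) = 2x_1 + x_2 + x_3$: the influx reaction shifts the class by $+2$, the reaction $2X_1 \to 3X_2$ by $-1$, both $X_1 + X_2 \to X_3$ and $X_1 \to \emptyset$ by $-2$, and the degradations $X_2 \to \emptyset$, $X_3 \to \emptyset$ by $-1$. Consequently, from any $i = (x_1, x_2, x_3) \in S^+_\ell$ the only nonzero class-to-class rates are
\[
Q_{i, \ell-2} = \beta x_1 x_2 + d_1 x_1, \qquad Q_{i, \ell-1} = \alpha x_1(x_1 - 1) + d_2 x_2 + d_3 x_3, \qquad Q_{i, \ell+2} = b_1(x_2 + x_3) + b_2.
\]

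From these formulas, almost all values of $f^\pm$ are immediate. The partial sum $Q_{i, 0:m}$ vanishes for $m \leq \ell - 3$, while $Q_{i, 0:\ell-2} = x_1(\beta x_2 + d_1)$ has infimum $0$ over $S^+_\ell$ (attained at $x_1 = 0$), so $f^-_{\ell, m} = 0$ for all $m \leq \ell - 2$. Symmetrically, $Q_{i, m:\infty}$ vanishes for $m \geq \ell + 3$, while for $m \in \{\ell+1, \ell+2\}$ it equals $b_1(\ell - 2x_1) + b_2$, whose supremum $b_1 \ell + b_2$ is attained at $x_1 = 0$. The substantive step is the computation of $f^-_{\ell, \ell-1}$, the minimum over $S^+_\ell$ of $h(x_1, x_2, x_3) = \alpha x_1(x_1-1) + \beta x_1 x_2 + d_1 x_1 + d_2 x_2 + d_3 x_3$: fixing $x_1$ and eliminating $x_3 = \ell - 2x_1 - x_2$ makes $h$ linear in $x_2$ with slope $\beta x_1 + d_2 - d_3$, so the optimum puts all remaining mass into $x_2$ or $x_3$ according to its sign. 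Substituting back yields a quadratic in $x_1$ whose real-valued minimiser is $x_1^\star = (\alpha + 2\min(d_2,d_3) - d_1)/(2\alpha)$, truncated at $0$ when this is negative; a short case analysis on the sign of $d_3 - d_2$ reconciles the two branches and delivers the announced value $\min(d_2, d_3)\,\ell - C(\theta)$, provided $\ell$ is large enough that the integer constraint $x_1^\star \in \mathbb{N}$ is no longer binding.

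With $f^\pm$ in hand, \eqref{eq:optimal_upper} collapses: $\bar{U}^-_{\ell, m} = 0$ for $m \leq \ell - 2$, $\bar{U}^-_{\ell, \ell-1} = f^-_{\ell, \ell-1}$ (the infimum is over the singleton $\{\ell\}$), $\bar{U}^+_{\ell, m} = 0$ for $m \geq \ell + 3$, and $\bar{U}^+_{\ell, \ell+1} = \bar{U}^+_{\ell, \ell+2} = b_1 \ell + b_2$ (supremum attained at $\ell' = \ell$). Inverting via \Cref{lem:bijec} causes every off-diagonal entry to telescope to $0$ except $Q^+_{\ell, \ell-1} = \bar{U}^-_{\ell, \ell-1} - 0$ and $Q^+_{\ell, \ell+2} = \bar{U}^+_{\ell, \ell+2} - 0$, while the diagonal is fixed by the row-sum condition. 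The principal obstacle is the combinatorial minimisation that produces $C(\theta)$: the bilinear coupling $\beta x_1 x_2$ forces the two-stage argument described above, and one must check carefully, depending on the sign of $d_3 - d_2$ and of $\alpha + 2\min(d_2,d_3) - d_1$, that rounding the real-valued minimiser to the nearest admissible integer $x_1$ does not perturb the quadratic minimum asymptotically.
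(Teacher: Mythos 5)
Your pipeline is exactly the paper's: tabulate the class shift of each reaction under $\mathrm{cl}^-(x)=2x_1+x_2+x_3$, compute $f^\pm$ from \eqref{eq:def_f_upper}, feed them into \eqref{eq:optimal_upper}, and invert with \Cref{lem:bijec}. The class shifts, the identities $f^-_{\ell,m}=0$ for $m\le \ell-2$, $f^+_{\ell,\ell+1}=f^+_{\ell,\ell+2}=b_1\ell+b_2$, and the telescoping of $\Phi^{-1}$ all match the paper and are correct.

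The gap is in the one computation you yourself flag as the principal obstacle, namely $f^-_{\ell,\ell-1}$. You (correctly, per the convention $Q_{i,0:\ell-1}=Q_{i,\ell-2}+Q_{i,\ell-1}$) minimise $h=\alpha x_1(x_1-1)+\beta x_1x_2+d_1x_1+d_2x_2+d_3x_3$, but your asserted outcome is wrong in the regime $d_2<d_3$. After eliminating $x_3$ the objective is
\begin{equation*}
\alpha x_1^2+\beta x_1x_2+(d_1-\alpha-2d_3)x_1+(d_2-d_3)x_2+d_3\ell ,
\end{equation*}
and when the slope $\beta x_1+d_2-d_3$ is negative you set $x_2=\ell-2x_1$; substituting back then produces the quadratic $(\alpha-2\beta)x_1^2+(\beta\ell+d_1-\alpha-2d_2)x_1+d_2\ell$, whose linear coefficient grows like $\beta\ell$. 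Its minimiser is therefore \emph{not} $(\alpha+2\min(d_2,d_3)-d_1)/(2\alpha)$: for $\ell$ large the function is increasing on the relevant range, the optimum sits at $x_1=0$, $x_2=\ell$, and the branch where $x_2=0$ gives at best $d_3\ell-O(1)>d_2\ell$. So with the bilinear term included the minimum is $d_2\ell$ with no $-C(\theta)$ correction, and the "short case analysis" does not reconcile the branches to the announced value. (The paper's own \Cref{ssec:app_2} arrives at $\min(d_2,d_3)\ell-C(\theta)$ precisely because it drops the $\beta x_1x_2$ contribution from $Q_{i,0:\ell-1}$ and minimises only $\alpha x_1(x_1-1)+\sum_i d_ix_i$ via \Cref{lem:technical_calculs}; with that objective your formula for $x_1^\star$ is correct in both sign cases.) To repair your argument you must either justify working with the bilinear-free objective, or redo the $d_2<d_3$ case honestly — in which case the rate you obtain for $Q^+_{\ell,\ell-1}$ differs from the proposition's statement whenever $\alpha+2d_2-d_1>0$. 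A secondary, smaller point: the stated $C(\theta)=\frac{(\alpha+2\min(d_2,d_3)-d_1)^2}{4\alpha}$ is the minimum of the continuous relaxation; the minimum over integer states can differ by an $O(1)$ amount when $x_1^\star\notin\mathbb{N}$, an issue you mention but do not resolve (and which the paper also glosses over).
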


\begin{proof}
The proof of \Cref{prop:upper_processes_def1} is detailed in \Cref{ssec:app_2}.
\end{proof}

In order to be able to apply \Cref{thm:statio_distri} to obtain results on the asymptotic behavior of the process $X$, it only remains to study the asymptotic behavior of the two one-dimensional CTMCs $Y$ and $Z$. Based on \cite{xu_full_2020}, we defined the following quantities:
\begin{itemize}
\item for the process $Y$,
$$ B'_1 = 2b_1 -  \min(d_2,d_3) \text{~~~ and ~~~} B'_3 = 2b_2+C(\theta)- \frac{3}{2}\min(d_2,d_3), $$
\item and for the process $Z$,
$$ B_1 := \lim_{\ell \to \infty} \frac{ \hat{U}^+_{\ell,\ell+1:\infty} - \hat{U}^-_{\ell,0:\ell-1}}{\ell} = \frac{2}{5}b_1 - \frac{3}{5}d_3 - 2B(\theta),$$
$$ B_2 := \lim_{\ell \to \infty} \left( \hat{U}^+_{\ell,\ell+1:\infty} - \hat{U}^-_{\ell,0:\ell-1} - B_1 \ell \right) = 2A(\theta) - \frac{3}{5}d_3 - B(\theta), $$
$$B_3 := B_2 - \frac{1}{2} \lim_{\ell \to \infty} \frac{\sum\limits_{k=1}^{\infty} (2k-1) \hat{U}^+_{\ell,\ell+k} + \sum\limits_{k=1}^{\ell} (2k-1) \hat{U}^-_{\ell,\ell-k}}{\ell} = 2A(\theta) - \frac{2}{5}b_1 -\frac{27}{10}d_3 - 3B(\theta). $$
\end{itemize}

Parameters $B_1$ and $B'_1$ represent the average jump size of the processes. $B_2$ is the average jump size of the process if the linear contributions are removed. The expression of the average jump size of the process $Y$ ($B'_2$) is not detailed here since this parameter is always strictly positive, meaning that it has no impact on the asymptotic behavior of the process. The sign of these five parameters determine the asymptotic behavior of the processes $Y$ and $Z$.

\begin{table}[H]
\centering
\settowidth\rotheadsize{Recurrent/}
\begin{NiceTabular}{|c|c|c|c|c|}
\CodeBefore
\cellcolor[HTML]{000000}{4-5,5-5}
\cellcolor[HTML]{ADD8E6}{3-3,4-4,5-4,6-5}
\rectanglecolor[HTML]{000000}{3-4}{3-5}
\Body
\hline
\Block{2-2}{\diagbox{\quad \rule[-4mm]{0pt}{10mm}Lower process $Z_t$}{\rule[-10mm]{0pt}{15mm}Upper process $Y_t$ \quad}} && Transient and non-explosive & Null-recurrent & Recurrent \\
\cline{3-5}
& & $B'_1 >0$ or $\begin{cases} \begin{array}{rl}
B'_1 &= 0  \\
B'_3 &> 0
\end{array} \end{cases}$ & $\begin{cases} \begin{array}{rl}
B'_1 &= 0  \\
B'_3 &\leq 0
\end{array} \end{cases}$ & $B'_1 <0$ \\
\hline
\rothead{\text{Transient}} & $B_1 >0$ or $\begin{cases} \begin{array}{c}
B_1 = 0  \\
B_3 > 0
\end{array} \end{cases}$ & Transient and non-explosive & & \\
\hline
\Block{3-1}{\rothead{Recurrent}} & \Block{2-1}{ $\begin{cases} \begin{array}{c}
B_1 = 0  \\
B_2 \geq 0 \\
B_3 \leq 0
\end{array} \end{cases}$} & \Block{2-1}{Non-explosive} & \Block{2-1}{Null-recurrent} & \\
& & & & \\
\cline{2-5}
& $\begin{cases} \begin{array}{c}
B_1 = 0  \\
B_2 < 0
\end{array} \end{cases}$ or $B_1 <0$ & Non-explosive & Recurrent & Positive recurrent\\
\hline
\end{NiceTabular}
\caption{Asymptotic behavior of $X$ according to the parameter values. The black regions represent impossible parameter combinations. In the blue regions, the asymptotic behavior of $X$ can be fully deduced from the bounding processes. In the white regions, we only have access to partial information.}\label{tab:asympto_behavior_parameters}
\end{table}

\Cref{tab:asympto_behavior_parameters} illustrates the kind of condition that we can obtain with the construction of the lower and upper-bounding processes. In this case, the processes $Y$ and $Z$ are non-explosive for all parameter values so the corresponding column and row in \Cref{tab:asympto_behavior} have been removed.\\

\subsection{Importance of the choice of classes}

Let us recall the following partition defined by \Cref{eq:def_upper_classes},

$$ \forall \ell \in \mathbb{N},~ \tilde{S}^{+}_{\ell} = \{ x_1 + x_2 + x_3 = \ell \}. $$

In this partition $\tilde{S}^{+}$, the states are ordered by the total number of molecules. This partition is the simplest partition but does not take into account the structure of the reaction network.\\

Let $\tilde{Q}^+$ be the matrix of the optimal upper-bounding process $\tilde{Y}$ related to the partition $(\tilde{S}^{+}_{\ell})_{\ell\in \mathbb{N}}$. Then, we have

\begin{prop}\label{prop:upper_processes_def2}
The matrix $\tilde{Q}^+$ of the optimal upper-bounding process $\tilde{Y}$ for the set of classes $(\tilde{S}^{+}_{\ell})_{\ell\in \mathbb{N}}$ verifies, for any $\ell \geq 0$,
$$ \tilde{Q}^{+}_{\ell,\ell-1} = d_2\ell ~, ~~~~ \tilde{Q}^{+}_{\ell,\ell+1} = \alpha \ell^2 - \alpha \ell +b_2, $$
$$ \tilde{Q}^{+}_{\ell,\ell} = -\tilde{Q}^{+}_{\ell,\ell-1}-\tilde{Q}^{+}_{\ell,\ell+1}. $$
For any set of parameters $\theta$, the process $\tilde{Y}$ is explosive.
\end{prop}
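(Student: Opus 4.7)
The plan has two parts: derive the birth--death form of $\tilde{Q}^+$ via Proposition~\ref{prop:optimal_upper}, then establish explosion of the resulting one-dimensional chain. For the structure, the key observation is that every reaction in the network changes the sum $x_1+x_2+x_3$ by exactly $\pm 1$: productions of $X_1$ (including $2X_1 \to 3X_2$, which turns two molecules into three) add $1$, while the three degradations and $X_1+X_2 \to X_3$ subtract $1$. Consequently $Q_{i,j}=0$ whenever $|\cl{i}-\cl{j}|\geq 2$, which forces $f^-_{\ell',m}=0$ for $m<\ell'-1$ and $f^+_{\ell',m}=0$ for $m>\ell'+1$. Plugging this into \eqref{eq:optimal_upper} and inverting with \Cref{lem:bijec} gives $\tilde{Q}^+_{\ell,m}=0$ for $|\ell-m|\geq 2$, so the bounding chain is birth--death.

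For the two remaining off-diagonals, the index range in \eqref{eq:optimal_upper} collapses. First, $\tilde{Q}^+_{\ell,\ell-1}=\bar{U}^-_{\ell,\ell-1}=\min_{x\in\tilde{S}^+_\ell}(d_1 x_1+d_2 x_2+d_3 x_3+\beta x_1 x_2)$; all four terms are non-negative and $\beta x_1 x_2$ vanishes iff $x_1=0$ or $x_2=0$, so the minimum is attained at a vertex $x_i=\ell$ of the simplex, giving $\min_i d_i \cdot \ell$, which matches $d_2\ell$ under the labelling implicit in the proposition. Second, only the term $\ell'=\ell$ contributes in $\bar{U}^+_{\ell,\ell+1}=\max_{0\leq\ell'\leq\ell} f^+_{\ell',\ell+1}$, since a one-step jump from $\tilde{S}^+_{\ell'}$ with $\ell'<\ell$ cannot reach $\tilde{S}^+_{\ell+1}$; hence $\tilde{Q}^+_{\ell,\ell+1}=\max_{x\in\tilde{S}^+_\ell}[\alpha x_1(x_1-1)+b_1(x_2+x_3)+b_2]$. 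Substituting $x_2+x_3=\ell-x_1$, this is a convex quadratic in $x_1\in[0,\ell]$; comparing the two corner values, $\alpha\ell(\ell-1)+b_2$ dominates $b_1\ell+b_2$ once $\ell\geq 1+b_1/\alpha$, yielding $\tilde{Q}^+_{\ell,\ell+1}=\alpha\ell^2-\alpha\ell+b_2$. The diagonal entry is then fixed by the zero-row-sum condition.

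For explosion, $\tilde{Y}$ is an irreducible birth--death chain with $\lambda_\ell\sim \alpha\ell^2$ and $\mu_\ell=d_2\ell$. I would invoke Reuter's classical criterion: the chain is non-explosive iff $\sum_{n\geq 0}A_n=\infty$, where
$$A_n=\sum_{k=0}^n \frac{\mu_n\mu_{n-1}\cdots\mu_{n-k+1}}{\lambda_n\lambda_{n-1}\cdots\lambda_{n-k}}.$$
Since $\mu_j/\lambda_{j-1}=d_2 j/[\alpha(j-1)(j-2)+b_2]=O(1/j)\to 0$, one fixes $n_0$ with $\mu_j/\lambda_{j-1}\leq 1/2$ for $j\geq n_0$; factoring $1/\lambda_n$ out of $A_n$ and splitting the inner sum at $k=n-n_0$ then controls the bulk by a geometric series and the remaining finitely many tail terms by a constant, yielding $A_n\leq C/\lambda_n=O(1/n^2)$. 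Summability of $(A_n)$ forces explosion of $\tilde{Y}$, uniformly in $\theta$. The main obstacle is precisely this uniform bound on $A_n$; everything else is mechanical algebra. A minor subtlety is that the formula for $\tilde{Q}^+_{\ell,\ell+1}$ is really the true optimum only once $\ell\geq 1+b_1/\alpha$ (for smaller $\ell$ the value $b_1\ell+b_2$ takes over), but this does not affect the quadratic asymptotics that drive the explosion.
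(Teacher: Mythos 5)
Your derivation of the matrix $\tilde{Q}^+$ follows the paper's route exactly: compute $f^-$ and $f^+$ from \eqref{eq:def_f_upper} using the observation that every reaction shifts $x_1+x_2+x_3$ by exactly $\pm 1$, then pass through \eqref{eq:optimal_upper} and \Cref{lem:bijec} to get a birth--death matrix. You are in fact more careful than the paper on two small points. First, the appendix computes $f^-_{\ell,\ell-1}=\min(d_1,d_2,d_3)\,\ell$ while the proposition states $d_2\ell$; your remark about the ``implicit labelling'' is the right reading. Second, the value $\alpha\ell^2-\alpha\ell+b_2$ is the true maximum of $\alpha x_1(x_1-1)+b_1(x_2+x_3)+b_2$ over $\tilde{S}^{+}_{\ell}$ only for $\ell=0$ or $\ell\geq 1+b_1/\alpha$; for intermediate $\ell$ the corner $x_1=0$ wins and the maximum is $b_1\ell+b_2$, so the proposition's ``for any $\ell\geq 0$'' is slightly too strong --- as you note, this affects only finitely many rates and not the explosion conclusion. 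Where you genuinely diverge is the explosion argument: the paper disposes of it in one line by citing Theorems 3.1, 3.3 and 3.7 of the reference of Xu et al., whereas you give a self-contained proof via Reuter's criterion, showing $\sum_n A_n<\infty$ through the estimate $\mu_j/\lambda_{j-1}=d_2 j/\bigl(\alpha(j-1)(j-2)+b_2\bigr)\to 0$, which yields $A_n=O(1/\lambda_n)=O(1/n^2)$ after splitting the inner sum at $k=n-n_0$. This estimate is sound (note $\lambda_\ell\geq b_2>0$ and $\mu_\ell=d_2\ell>0$ for $\ell\geq 1$, so the chain is irreducible and the ratios are well defined), and it buys a reference-free, fully explicit argument at the cost of half a page of computation; the paper's citation buys brevity. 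Both are valid.
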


\begin{proof}
The calculations of the matrix $\tilde{Q}^+$ is detailed in \Cref{ssec:app_3}.\\
With the expressions of the birth and death rates $\tilde{Q}^{+}_{l,l+1}$ and $\tilde{Q}^{+}_{l,l-1}$ and Theorems (3.1), (3.3) and (3.7) of \cite{xu_full_2020}, we obtain directly thean the process $\tilde{Y}$ is explosive.
\end{proof}

One can see that using the classes $\tilde{S}^+$ leads to a different optimal upper-bounding process compared to the process in \Cref{prop:upper_processes_def1} that was obtained with the classes $S^+$. Concretely, we obtain a process with a quadratic birth rate and linear death rate as opposed to the linear birth and death rates in \Cref{prop:upper_processes_def1}.\\

According to \Cref{tab:asympto_behavior}, when the upper-bounding process is explosive (first column of \Cref{tab:asympto_behavior}), the original process $X$ can have any asymptotic behavior: explosive, transient, null-recurrent or positive recurrent. Thus the naive choice of classes gives no information about the asymptotic behavior of the process $X$. On the contrary, the upper-bounding process $Y$ used in \Cref{sssec:numeric_stationary} gives relevant conditions on the asymptotic behavior of the process $X$ (see \Cref{tab:asympto_behavior_parameters}). Thus, the choice of the set of classes influences the quality of the results obtained on the stationary distribution.\\

\subsubsection{Coupling with two sets of classes}

In this subsection, numerical simulations are used to illustrate the coupling used in the proof of \Cref{thm:main_upper-bound} and the difference between the two upper-bounding processes $Y$ and $\tilde{Y}$ (see \Cref{prop:upper_processes_def1,prop:upper_processes_def2}) related respectively to the partition $S^{+}$ and to the naive partition $\tilde{S}^+$.\\
For the numerical simulations, we used the set of parameters $\theta$ detailed in \Cref{tab:values_ex} for which the initial process $X$ and the upper-bounding process $Y$ have a stationary distribution (see \Cref{tab:asympto_behavior_parameters}) as opposed to the explosive process $\tilde{Y}$.

\begin{table}[h!]
\centering
\begin{tabular}[\linewidth]{ | c || c | c | c | c | c | c | c |}
\cline{2-8}
\multicolumn{1}{c|}{} & $b_1$ & $b_2$ & $\alpha$ & $\beta$ & $d_1$ & $d_2$ & $d_3$\\
\hline
$\theta$ & 1 & $\frac{5}{2}$ & $\frac{5}{2}$ & 2 & $\frac{5}{2}$ & $\frac{5}{2}$ & 3 \\
\hline
\end{tabular}
\caption{Parameter values used for the numerical illustrations.}\label{tab:values_ex}
\end{table}

The initial condition is fixed to $X_0 = (15, 5, 5)$ that corresponds to $Y_0 = 40$ and $\tilde{Y}_0 = 25$.\\

\begin{figure}[h!]
\centering
\includegraphics[scale= 0.55]{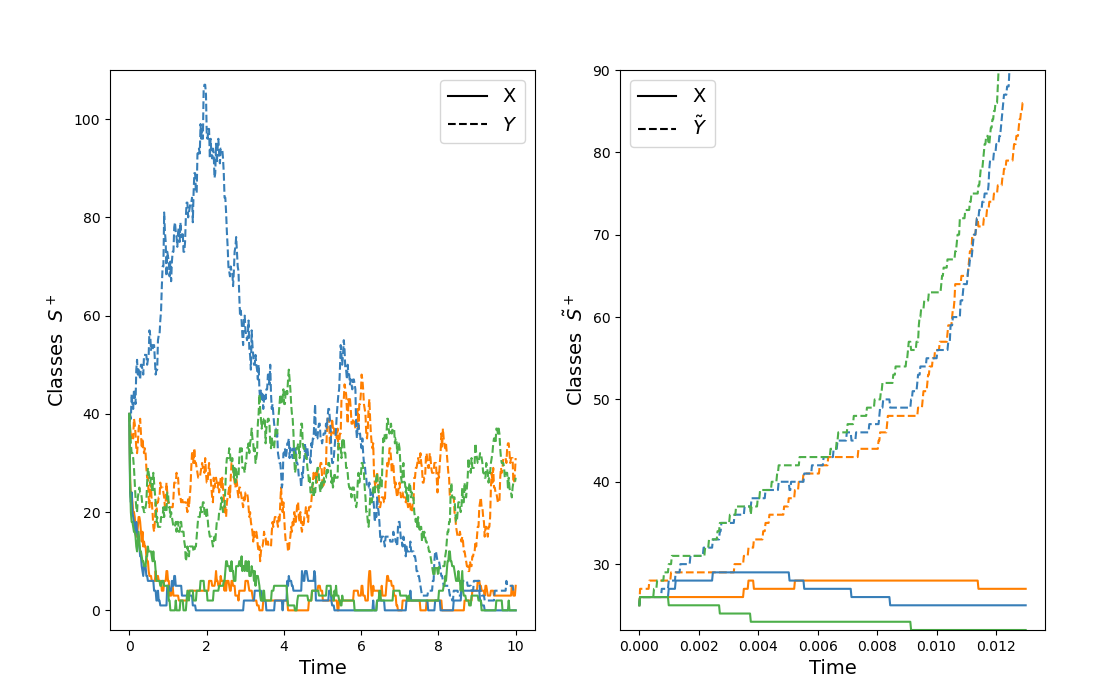}
\caption{Three trajectories of the coupling process for the two sets of classes $S^+$ (left panel) and $\tilde{S}^{+}$ (right panel). On each panel, we plot the trajectory of the upper-bounding process (second component of the coupling obtained with \Cref{thm:main_upper-bound}) in solid lines and the class of the original process in dashed lines with respect to time.}\label{fig:coupl_upper}
\end{figure}

\Cref{fig:coupl_upper} illustrates the difference in behavior at infinity between the two upper-bounding processes: the existence of a stationary distribution for $Y$ compared to the explosion of process $\tilde{Y}$. The three trajectories of $\tilde{Y}$ gradually move away from the initial process $X$ (see right-hand panel) and can only be stochastically simulated for very small time ($T_f = 0.013$ here) due to the explosion of the process. This is due to the quadratic birth rate of $\tilde{Y}$ (see \Cref{prop:upper_processes_def2}). On the contrary, the trajectories of $Y$ stabilise and remain close to $X$ (see left-hand panel).

\subsection{Truncation error}\label{ssec:truncation_error}

\subsubsection{Theoretical result}

In principle, the time evolution of the probability distribution of a CTMC can be computed by solving the corresponding master equation. However, for most reaction networks the state space is countably infinite and solving the master equation is impossible without first truncating the state space \cite{munsky_finite_2006}. Truncating the state space gives rise to a linear system of ordinary differential equations whose solution approximates the solution of the full master equation. In the case of a multi-dimensional CTMC, even after truncation the state space is often still large leading to a time-consuming numerical solving of this linear system. It is therefore impossible to test different truncations when we want to guarantee a given approximation error due to the truncation.\\

In this section, we show how the bounding processes can be used to guide the choice of truncation and to quantify the approximation error. Since the bounding processes are one-dimensional, they are easy to analyse and simulate so our approach allows us to deal with high-dimensional initial CTMC without any numerical problems.\\

Let $\Omega$ be the truncated state space and $T_f$ the final time. The truncation error is defined as the probability that the process of interest $X$ will exit $\Omega$ over the whole time interval $[0,T_f]$,
$$ e_{trunc}(\Omega) := \mathbb{P}\left(\exists ~t \leq T_f,~ X_t \notin \Omega \right).$$
The goal of this section is to use the upper-bounding process constructed in \Cref{sec:upper} to bound this truncation error. For the sake of simplicity, we will only consider truncated state spaces $\Omega$ defined through the classes, i.e for any $N \in \mathbb{N}$,
$$ \Omega_N = \bigcup_{\ell=0}^N S_{\ell}.$$
With \Cref{thm:main_upper-bound}, we can construct a one-dimensional continuous-time Markov chain $Y_t$ on $\mathbb{N}$ such that
$$  \forall t \geq 0,~ \cl{X_t} \leq Y_t.$$
We have a first obvious bound given by
\begin{equation*}
e_{trunc}(\Omega_N) \leq \mathbb{P}\left(\exists ~t \leq T_f,~ Y_t > N \right).
\end{equation*}
It is possible to compute an upper bound of $\mathbb{P}\left(\exists ~t \leq T_f,~ Y_t > N \right)$ in function of the solution to the CME associated with $Y$. In practice, only a truncated version $\tilde{p}^M$of the CME can be computed. For a given $M \in \mathbb{N}$ with $M \geq N$, we define it as follows:
\begin{equation*}
\forall t \geq 0,~ \forall m \in \llbracket 0, M \rrbracket,~ \frac{\d{\tilde{p}_{m}^M} }{\d t}(t) = \sum_{\ell =0}^M \tilde{p}_{\ell}^M(t) \tilde{Q}_{\ell,m},
\quad
\tilde{p}_m^M(0)= \mathbb{P}\big[ X_0 \in S_m \big].
\end{equation*}

Finally, we introduce the two following quantities,
\begin{itemize}
    \item the exit rate $F^{out}_{\Omega_N}(t)$ at time $t$ of the set $\Omega_N$,
    $$ F^{out}_{\Omega_N}(t) = \sum_{\ell =0}^N \sum_{m=N+1}^M \tilde{p}^M_{\ell}(t)\tilde{Q}_{\ell,m}, $$
    \item and the integral of $F^{out}_{\Omega_N}$ on $[0, T_f]$,
    $$ F_{\Omega_N} = \int_0^{T_f} F^{out}_{\Omega_N}(s) \d s. $$
\end{itemize}
Our next theorem gives an upper bound of $e_{trunc}(\Omega_N)$ in function of $\tilde{p}^M$.

\begin{thm}\label{thm:trunc_error}
Let $(M,N) \in \mathbb{N}^2$ such that $M \geq N$. Then the truncated error is upper bounded by
\begin{equation}
\label{eq:bound_error_truncation}
e_{trunc}(\Omega_N) \leq
\underbrace{\left( 1- \tilde{p}^{M}_{0:M}(T_f)\right) 
+
\tilde{p}^{M}_{N+1:M}(0)}_{=: C(N)}
+
 F_{\Omega_N}.
\end{equation}
So for a fixed error $\epsilon$, the minimal state space that guarantees a truncation error smaller than $\epsilon$ is $\Omega_{\hat{N}}$ with
\begin{equation}\label{eq:esti_truncation}
\hat{N} = \min E(\epsilon),
\end{equation}
where $E(\epsilon) = \{N,~ 0\leq N\leq M \text{ s.t }~ C(N) + F_{\Omega_N} < \epsilon \}$.
\end{thm}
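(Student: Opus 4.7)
My plan is to reduce the truncation error to a property of the one-dimensional upper-bounding process $Y$, reinterpret the truncated solution $\tilde{p}^M$ probabilistically as the distribution of $Y$ killed upon exit from $\{0,\ldots,M\}$, and then decompose the target event along the exit time from $\Omega_M$ while applying a Markov-inequality argument on the expected number of upward jumps to control the residual term.

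First, since $Y$ is an upper-bounding process of $X$ by \Cref{thm:main_upper-bound}, almost surely $\cl{X_t} \leq Y_t$ for all $t \geq 0$. Setting $\tau_N := \inf\{t \geq 0 : Y_t > N\}$ and $\tau_M := \inf\{t \geq 0 : Y_t > M\}$, and noting $\tau_N \leq \tau_M$ a.s., this immediately gives
$$
e_{trunc}(\Omega_N)
= \mathbb{P}(\exists\, t \leq T_f,\ \cl{X_t} > N)
\leq \mathbb{P}(\tau_N \leq T_f),
$$
so the proof reduces to bounding the right-hand side. Next, I would interpret $\tilde{p}^M$ probabilistically: the ODE defining $\tilde{p}^M$ is precisely the forward Kolmogorov equation for $Y$ killed on exit from $\{0,\ldots,M\}$. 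Choosing $Y_0$ with law $(\mathbb{P}(X_0 \in S_m))_m$ (the natural coupling produced by \Cref{thm:main_upper-bound}), a uniqueness argument for the killed-process ODE yields
$$
\tilde{p}^{M}_{\ell}(t) = \mathbb{P}(Y_t = \ell,\ \tau_M > t), \qquad \forall\, \ell \in \{0,\ldots,M\},
$$
and summing over $\ell$ gives $1 - \tilde{p}^{M}_{0:M}(T_f) = \mathbb{P}(\tau_M \leq T_f)$.

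I would then split along $\tau_M$, using $\tau_N \leq \tau_M$:
$$
\mathbb{P}(\tau_N \leq T_f) = \mathbb{P}(\tau_M \leq T_f) + \mathbb{P}(\tau_N \leq T_f,\ \tau_M > T_f).
$$
The first term is exactly $1 - \tilde{p}^{M}_{0:M}(T_f)$, which is the first piece of $C(N)$. For the second, on $\{\tau_M > T_f\}$ the process $Y$ stays in $\{0,\ldots,M\}$ throughout $[0, T_f]$, so $\{\tau_N \leq T_f,\ \tau_M > T_f\}$ is contained in the event that $Y$ takes a value in $\{N+1,\ldots,M\}$ at some time in $[0, T_f]$. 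By trajectorial inspection, this can happen only if $Y_0 \in \{N+1,\ldots,M\}$ --- an event of probability $\tilde{p}^{M}_{N+1:M}(0)$ --- or if $Y$ makes at least one jump from $\{0,\ldots,N\}$ into $\{N+1,\ldots,M\}$ during $[0, T_f]$.

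The probability of the latter is bounded by its expected number of occurrences. Using the identification of $\tilde{p}^{M}_{\ell}(t)$ obtained above, the expected number of such jumps on $[0, T_f]$ equals
$$
\int_0^{T_f} \sum_{\ell=0}^{N} \sum_{m=N+1}^{M} \tilde{p}^{M}_{\ell}(t)\, \tilde{Q}_{\ell,m}\, dt = F_{\Omega_N},
$$
so Markov's inequality yields the desired bound $\leq F_{\Omega_N}$. Combining everything gives \eqref{eq:bound_error_truncation}, and the characterisation of $\hat{N}$ in \eqref{eq:esti_truncation} then follows immediately from the definition of $E(\epsilon)$. The main subtlety will be the rigorous justification of the killed-process interpretation of $\tilde{p}^M$ on the countable state space, and keeping track of the scenario where $Y$ jumps directly from $\{0,\ldots,N\}$ into $\{M+1,\ldots\}$: this is correctly absorbed by the $\mathbb{P}(\tau_M \leq T_f)$ term (first piece of $C(N)$) rather than being double-counted in the flux $F_{\Omega_N}$, thanks to the decomposition along $\tau_M$.
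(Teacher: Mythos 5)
Your proof is correct, but it follows a genuinely different route from the paper. The paper works analytically: it introduces a second truncated master equation $\tilde{p}^N$ on $\llbracket 0,N\rrbracket$, bounds $e_{trunc}(\Omega_N)\leq 1-\tilde{p}^N_{0:N}(T_f)$, and then estimates the difference $\tilde{p}^N_{0:N}(T_f)-\tilde{p}^M_{0:M}(T_f)$ by integrating the time derivative of the difference and using the pointwise comparison $\tilde{p}^N_m(t)\leq\tilde{p}^M_m(t)$ together with the sign of $\sum_{m=0}^N\widetilde{Q}_{\ell,m}$; the only probabilistic input is the killed-process interpretation \eqref{eq:trunc0}. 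You instead argue pathwise on $Y$: you decompose $\{\tau_N\leq T_f\}$ along $\tau_M$, identify $\mathbb{P}(\tau_M\leq T_f)$ with $1-\tilde{p}^M_{0:M}(T_f)$, isolate the initial-condition term $\tilde{p}^M_{N+1:M}(0)$, and control the remaining event by a first-moment (Markov) bound on the number of jumps from $\llbracket 0,N\rrbracket$ into $\llbracket N+1,M\rrbracket$ occurring before $\tau_M\wedge T_f$. This requires the compensator identity $\mathbb{E}\big[\#\{\text{such jumps}\}\big]=\int_0^{T_f}\sum_{\ell=0}^{N}\sum_{m=N+1}^{M}\tilde{p}^M_{\ell}(t)\widetilde{Q}_{\ell,m}\,dt$, which is the one step you should justify explicitly (it follows from optional stopping applied to the compensated jump-counting martingale of the chain $Y$, stopped at $\tau_M$); it is a heavier probabilistic tool than anything the paper uses, but in exchange it gives a transparent interpretation of every term in \eqref{eq:bound_error_truncation} --- in particular of $F_{\Omega_N}$ as the expected number of exits from $\Omega_N$ prior to leaving $\Omega_M$ --- and it makes explicit why direct jumps from $\llbracket 0,N\rrbracket$ past $M$ are absorbed into the first term rather than double-counted. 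Both arguments yield exactly the same bound, and your derivation of \eqref{eq:esti_truncation} from it is the same as the paper's.
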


\begin{proof}
Let us call $\tilde{p}^N$ the solution to the CME associated with $Y$ on the truncated state space $\llbracket 0, N \rrbracket$, i.e.,
\begin{equation*}
\forall t \geq 0,~ \forall m \in \llbracket 0, N \rrbracket,~ \frac{\d{\tilde{p}_{m}^N} }{\d t}(t) = \sum_{\ell =0}^N \tilde{p}_{\ell}^N(t) \tilde{Q}_{\ell,m},
\quad
\tilde{p}_m^N(0)= \mathbb{P}\big[ X_0 \in S_m \big].
\end{equation*}
We have a probabilistic interpretation of $\tilde{p}_m^N$, given by
\begin{equation} \label{eq:trunc0}
\tilde{p}_m^N(t)
= \mathbb{P} \big[ Y_t = m \ \text{and} \ Y_s \leq N, \forall s \in [0,t]\big].
\end{equation}
Therefore we have
\begin{equation*}
e_{trunc}(\Omega_N)
\leq 1- \tilde{p}_{0:N}^N(T_f).
\end{equation*}
We need now to find a lower bound of the quantity $\tilde{p}_{0:N}^N(T_f)$. We have
\begin{align}
\tilde{p}_{0:N}^N(T_f)
- \tilde{p}_{0:M}^M(T_f)
={} & \tilde{p}_{0:N}^N(0)-\tilde{p}_{0:M}^M(0)
+ \int_0^{T_f}
\frac{\text{d}}{\text{d}t}
\Big( \tilde{p}_{0:N}^N(t)-\tilde{p}_{0:M}^M(t) \Big)
dt \notag \\
= {} & -\tilde{p}_{N+1:M}^M(0)
+ \int_0^{T_f}
\Bigg[
\underbrace{
\sum_{\ell= 0}^N \Big( \tilde{p}_\ell^N(t) \sum_{m=0}^N \widetilde{Q}_{\ell,m} \Big)
-
\sum_{\ell= 0}^M \Big( \tilde{p}_\ell^M(t) \sum_{m=0}^M \widetilde{Q}_{\ell,m} \Big)
}_{=:\phi(t)}
\Bigg]
dt. \label{eq:trunc1}
\end{align}
We have the following inequality, for any $m \in \llbracket 0, N \rrbracket$:
\begin{equation*}
\tilde{p}_m^N(t)
= \mathbb{P} \big[ Y_t = m \ \text{and} \ Y_s \leq N, \forall s \in [0,t]\big]
\leq
\mathbb{P} \big[ Y_t = m \ \text{and} \ Y_s \leq M, \forall s \in [0,t]\big]
= \tilde{p}_m^M(t).
\end{equation*}
As $\sum_{m=0}^N \widetilde{Q}_{\ell,m} \leq 0$, we deduce that
\begin{equation*}
\phi(t)
\geq
\sum_{\ell= 0}^N \Big( \tilde{p}_\ell^M(t) \sum_{m=0}^N \widetilde{Q}_{\ell,m} \Big)
-
\sum_{\ell= 0}^M \Big( \tilde{p}_\ell^M(t) \sum_{m=0}^M \widetilde{Q}_{\ell,m} \Big).
\end{equation*}
Re-arranging the right-hand side, we obtain that $\phi(t) \geq \phi_1(t) + \phi_2(t)$, where
\begin{equation*}
\phi_1(t)= \sum_{\ell=0}^N
\Big[
\tilde{p}_\ell^M(t)
\Big(
\sum_{m=0}^N \widetilde{Q}_{\ell,m}- \sum_{m=0}^M \widetilde{Q}_{\ell,m}
\Big)
\Big]
\quad \text{and} \quad
\phi_2(t)= \sum_{\ell=N+1}^M \Big( \tilde{p}_\ell^M(t) \sum_{m=0}^M \widetilde{Q}_{\ell,m} \Big).
\end{equation*}
Obviously, we have
\begin{equation} \label{eq:trunc2}
\phi_1(t)
= -\sum_{\ell=0}^N \tilde{p}_{\ell}^M(t) \Big(
\sum_{m=N+1}^M \widetilde{Q}_{\ell,m}
\Big)
\quad \text{and} \quad
\phi_2(t) \leq 0.
\end{equation}
Combining \eqref{eq:trunc1} with \eqref{eq:trunc2}, we obtain that
\begin{equation*}
\tilde{p}_{0:N}^N(T_f)
 - \tilde{p}_{0:M}^M(T_f)
= -\tilde{p}_{N+1:M}^M(0)
- \sum_{\ell=0}^N \sum_{m=N+1}^M \int_0^{T_f} \tilde{p}_{\ell}^M(t) 
\widetilde{Q}_{\ell,m}
dt,
\end{equation*}
from which \eqref{eq:bound_error_truncation} follows, thanks to \eqref{eq:trunc0}.
\end{proof}

\Cref{thm:trunc_error} can be used to devise a method to determine for a fixed error $\epsilon$ the smallest truncated state space $\Omega_{\hat{N}}$ by only simulating the process $Y$ as follows:
\begin{itemize}
\item Calculate the transition-rate matrix $\tilde{Q}$ of the upper-bounding process $Y$ using \Cref{prop:optimal_upper},
\item Solve the master equation for the process $Y$ on $[0,T_f]$ on the finite space $\llbracket 0,M \rrbracket$ for $M$ such that
$$ 1- \tilde{p}^{M}_{0:M}(T_f) \leq \epsilon. $$
\item Compute the size $\hat{N}$ of the truncated state space using \Cref{eq:esti_truncation}.
\end{itemize}

Finding a value for $M$ such that the quantity $1- \tilde{p}^{M}_{0:M}(T_f)$ is smaller than $\epsilon$ is easy because $Y$ is a one-dimensional CTMC so a large value for $M$ can be picked directly. Just directly choosing a very large truncation for $X$ would not be feasible since the high-dimensional state space would make solving the master equation too costly in terms of calculation time.
Moreover, the same value of $M$ can be used to compute the minimal truncated state space for different values of $\epsilon$ which avoids having to solve a new master equation.

\subsubsection{Application of the method to the toy model}

In this subsection, we illustrate the method derived from \Cref{thm:trunc_error} on the estimation of the error coming from the truncation of the state space with the upper-bounding process used in the previous section.\\

Let us recall the following set of classes defined by \Cref{eq:def_upper_classes},
$$ \forall \ell \geq 0,~ S^{+}_{\ell} = \{ 2x_1 + x_2+x_3= \ell \}, $$
for which the optimal upper-bounding process $Y$ is defined through its transition-rate matrix $Q^+$ by \Cref{prop:upper_processes_def1}.

For the numerical simulations, we used the set of parameters $\theta$ detailed in \Cref{tab:values_ex}, $M = 2000$ and $T_f=4$. With the same notation as in \Cref{thm:trunc_error}, we denote by $\tilde{p}^{M}$ the solution of the master equation of $Y$ truncated on $[0,M]$ until time $T_f$. The initial condition $X_0$ of the original process is set to $(30,10,10)$ which corresponds to $\tilde{p}^{M}(0)=\delta_{80}$.\\

The upper bound $E^T$ of the truncation error, defined by \Cref{eq:bound_error_truncation}, is expressed for any $0\leq N\leq M$ as
$$ E^T(N) = \left( 1- \tilde{p}^{M}_{0:M}(T_f)\right) + \delta_{N \leq 80} + \int_0^{T_f} \sum_{x \in \Omega_N} \sum_{y \notin \Omega_N} \tilde{p}^{M}_x(t) Q^+(x,y), $$
with $\Omega_N = \bigcup\limits_{\ell=0}^N S^+_{\ell}$.\\

\begin{figure}[h!]
\centering
\includegraphics[scale= 0.7]{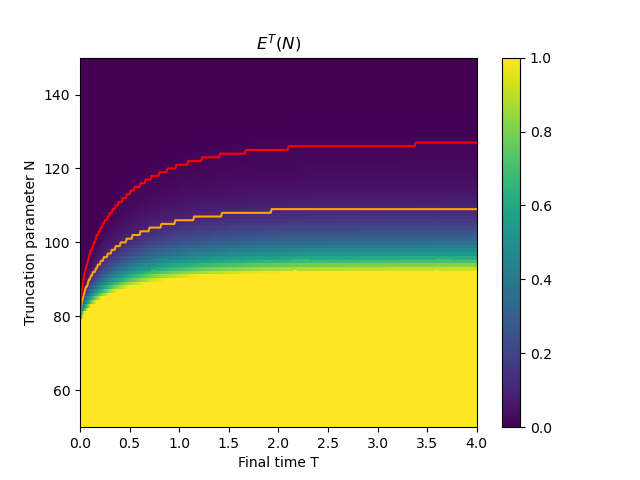}
\caption{Upper-bound $E^T(N)$ of the truncation error as a function of the final time $T$ and truncation parameter $N$ for the well-selected classes $S^+$. The $y$-axis represents the parameter $N$ chosen for truncation. The orange line (respectively the red line) represents the smallest $N$ obtained with \Cref{eq:esti_truncation} that leads to an error smaller than $0.1$ (resp. $0.01$).\\
Note that since $E^T(N)$ is an upper-bound of a probability, it could take values greater than $1$ even if we know that the estimated probability is lower than $1$. Knowing that values of $E^T(N)$ greater than 1 are set to 1 (yellow part of the graph).}\label{fig:error_trunc}
\end{figure}

The quantity $E^T(N)$ is plotted in \Cref{fig:error_trunc} for different final times $T$ (x-axis) and for different values of $N$ (y-axis). Note that the $E^T(N)$ is equal to $1$ for $N$ smaller than $80$ which is consistent with the initial condition being equal to $80$. The orange line (respectively the red line) represents the smallest $N$ for which the upper bound of the error is smaller than a fixed error $\epsilon$ equal to $0.1$ (resp. 0.01). Note that the same value of $M$ was used to compute the orange and red lines meaning that we solve the master equation links to Y once. The quantity $E^T(N)$ preserves the stationarity of the process since the orange and red lines does not go to infinity with time.\\
If we had chosen the classes $\tilde{S}^+$ (see \Cref{eq:def_upper_classes}) and the corresponding optimal upper-bounding process $\tilde{Y}$, we would not have obtained a converging truncation bound since $\tilde{Y}$ is explosive.

\section{Discussion}

Stochastic reaction networks are tremendously flexible and can serve as models to represent stochastic processes in various different scientific areas. However, in most applications, reaction networks comprise a multitude of interacting species, which implies that their dynamics follow a multi-dimensional continuous-time Markov chain that is notoriously difficult to analyze. In most cases, analytical solutions to the governing master equation are not available and even just stochastic simulation may be computationally too expensive to be practical.\\

In this paper, we have shown that despite these difficulties it may still be possible to establish properties of the CTMC such as its long-term behavior and the existence of a stationary distribution. To achieve this, we defined a partition of the state space, that we referred to as classes throughout the manuscript, and showed that the original CMTC transferred to the classes can be upper and lower bounded by one-dimensional birth death processes that are easy to analyze analytically and numerically. We proved that properties of the bounding processes can be used to gain insights on the original CTMC. For instance, for irreducible CTMCs, the existence of a stationary distribution for the upper-bounding process guarantees that the original CTMC also admits a stationary distribution. These results allow us to gain some level of understanding about complex intractable CTMCs from the study of easily tractable one-dimensional bounding processes.\\

The quality and accuracy of so-obtained results will generally depend on how tightly the bounding processes bound the original CTMC. We have shown how, for a given partition of the state space of the original CTMC, one can define the rates of the bounding processes that are guaranteed to lead to optimal bounds (\Cref{subsec:construction_Qtilde,subsec:construction_Qtilde_lower}). However, whether or not these optimal bound are practically useful still depends on how exactly the classes that partition the state space are chosen. Maybe the most natural choice is to define classes by the total number of individuals/molecules of all interacting species that are present in the system. We have shown, however, that this is a choice that is agnostic to the structure of the reaction network and may lead to bad result. For instance, for the case study in \Cref{sec:appli}, this naive choice of classes leads to an (optimal) upper-bounding process that is generally explosive despite the fact that the original CTMC, depending on the parameter values that determine the reaction rates, may admit a stationary distribution. On the other hand, as we have shown in \Cref{subsec:asymptotic_behavior}, a better choice of the classes that respects the structure of the reaction network allows one to use the lower and upper-bounding processes to determine the asymptotic behavior of the original process in different regions of parameter space.\\

In addition to using bounding processes to establish properties such as recurrence or existence of a stationary distribution, we have shown how the upper-bounding process can be used to guide the choice of finite state projections for the numerical analysis of CTMCs with countably infinite state spaces. Truncation of the state space is often the only possible approach for numerically approximating the solution of the master equation of the process. Thereby, approximation errors depend on how the truncation is chosen and increase with time such that it is often difficult to say \textit{a priori} how large the truncation needs to be to achieve a given error. We have shown in \Cref{ssec:truncation_error} that the error that a given truncation will create in the approximation of the solution of the master equation of the original CTMC can be bounded based on the solution of the master equation of the upper-bounding process, which is easy to approximate numerically since the upper-bounding process is one-dimensional. As we have shown in our case study, this allows us to straightforwardly calculate necessary truncation sizes to achieve given error guarantees for any desired time horizon (\Cref{fig:error_trunc}).\\

In sum, we believe that our scheme for constructing one-dimensional bounding processes for multi-dimensional CTMCs, together with our proofs on how properties of these bounding processes carry over to the original CTMC, will help researchers to gain a better understanding of the properties of the complex stochastic reaction networks that emerge from applications in biology and other fields.\\

\textbf{Acknowledgments:} This work was supported by the ANR (Opt-MC project, ANR-22-CE45-0019) and the European Research Council (ERC-2022-STG, BridgingScales, grant agreement 101075989).

\bibliographystyle{abbrv}
\bibliography{Manuscript_Ballif_Pfeiffer_Ruess}

\newpage
\begin{appendix}

\section{Technical calculations}

\begin{lem}\label{lem:technical_calculs}
Let $(a,b, c_1, c_2) \in \left(\mathbb{R}^+\right)^2 \times \mathbb{R}^2 $ and for $x \in \mathbb{R}^2$,
$$F(x) = ax_{1}^2 + bx_1 x_2 + c_1 x_1 +c_2x_2. $$
The goal of this lemma is to compute the minimum of the function $F$ on a subset of the upper right quadrant of the plane. To this purpose, we define for any $(\ell, \alpha_1, \alpha_2) \in \mathbb{N}_{*}^3$,
$$ M_{\ell} = \min_{x \in D_{\ell}} F(x), $$
with $D_{\ell} = \{ x \in \mathbb{R}_{+}^2,~ \alpha_1 x_1 + \alpha_2 x_2 \leq \ell \}$.\\

Then the elements of $D_{\ell}$ where the minimum of $F$ can be reached (infinite values are allowed if the denominator is null) are
\begin{equation*}
\begin{array}{llll}
p_1 = \left(-\frac{c_2}{b},~ \frac{2ac_2-bc_1}{b^2} \right), & p_2 = \left(0,0 \right), & p_3 = \left(0, \frac{\ell}{\alpha_2} \right),\\
p_4 = \left(-\frac{c_1}{2a},0 \right), & p_5 = \left(\frac{\ell}{\alpha_1}, 0 \right), & p_6 = \left( -d_1 \ell + d_2, \frac{1+d_1 \alpha_1}{\alpha_2}\ell -\frac{\alpha_1}{\alpha_2}d_2 \right),
\end{array}
\end{equation*}
with $d_1 = \frac{b}{2(a\alpha_2-b\alpha_1)}$ and $d_2 = \frac{\alpha_1 c_2-\alpha_2 c_1}{2(a\alpha_2-b\alpha_1)}$.\\

So we have, with $\mathcal{P} = \{p_i,~ 1 \leq i \leq 6\}$,
$$ M_{\ell} = \min_{p \in \mathcal{P} \cap D_{\ell}} F(p). $$
\end{lem}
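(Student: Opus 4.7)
The plan is to reduce the constrained minimization of $F$ over the compact triangle $D_\ell$ to a finite enumeration, by exploiting the fact that $F$ is a polynomial of total degree two, quadratic in $x_1$ but only linear in $x_2$. Since $F$ is continuous and $D_\ell$ is non-empty and compact, the minimum is attained. I would split the analysis into the interior of $D_\ell$ and the three edges of its boundary: the two axis segments $\{x_1=0\}$, $\{x_2=0\}$, and the slanted edge $\{\alpha_1x_1+\alpha_2x_2=\ell\}$. On each piece, the restricted function is a univariate polynomial of degree at most two, so its minimum is either at an interior critical point of the restriction or at an endpoint of the piece.

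For the interior, any minimizer must satisfy $\nabla F=0$, i.e.\ $bx_1+c_2=0$ and $2ax_1+bx_2+c_1=0$; when $b\neq 0$, solving gives the unique candidate $p_1=(-c_2/b,(2ac_2-bc_1)/b^2)$. On the edge $\{x_1=0\}$, $F$ restricts to $c_2x_2$, which is affine, so the minimum is attained at one of the endpoints $p_2=(0,0)$ or $p_3=(0,\ell/\alpha_2)$. On $\{x_2=0\}$, the restriction $ax_1^2+c_1x_1$ is a one-variable quadratic whose stationary point is $p_4=(-c_1/(2a),0)$, and the endpoints are $p_2$ and $p_5=(\ell/\alpha_1,0)$.

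On the slanted edge, substituting $x_2=(\ell-\alpha_1x_1)/\alpha_2$ yields a quadratic in $x_1$ with leading coefficient $a-b\alpha_1/\alpha_2$ and a stationary point
\[
x_1^\star=-\frac{b\ell+\alpha_2c_1-\alpha_1c_2}{2(a\alpha_2-b\alpha_1)}=-d_1\ell+d_2;
\]
a direct substitution back into $x_2=(\ell-\alpha_1x_1)/\alpha_2$ gives exactly the second coordinate $\frac{1+d_1\alpha_1}{\alpha_2}\ell-\frac{\alpha_1}{\alpha_2}d_2$ of $p_6$, while the endpoints of this edge are $p_3$ and $p_5$, already collected. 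Taking the union of all candidates yields $\mathcal{P}=\{p_1,\dots,p_6\}$, and since a minimizer of $F$ over $D_\ell$ must be one of these points and must lie in $D_\ell$, we get $M_\ell=\min_{p\in\mathcal{P}\cap D_\ell}F(p)$.

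The only subtlety is bookkeeping of the degenerate cases where the denominators $b$, $2a$, or $2(a\alpha_2-b\alpha_1)$ vanish: in each such case the corresponding restriction becomes affine rather than strictly quadratic, so no interior stationary point exists on that piece and the minimum is attained at a corner that is already in $\mathcal{P}$. Adopting the convention stated in the lemma that the offending candidate has an infinite coordinate, it is automatically excluded from $\mathcal{P}\cap D_\ell$, which keeps the formula uniformly valid. This bookkeeping is the only mildly delicate part of the argument; the rest is straightforward first-derivative analysis on each face of the triangle.
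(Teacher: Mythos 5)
Your proposal is correct and follows essentially the same route as the paper: decompose $D_\ell$ into its interior and the three boundary edges, find the stationary point of each (at most quadratic) restriction, and collect the endpoints, with the same treatment of the degenerate denominators via the ``infinite coordinate'' convention. The candidate computations, including $x_1^\star=-d_1\ell+d_2$ on the slanted edge, match the paper's.
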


\begin{proof}
Let $\bar{D_{\ell}}$ be the interior of $D_{\ell}$ and $\overset{\circ}{D_{\ell}}$ the boundary of $D_{\ell}$, that is
$$ \bar{D_{\ell}} = \{ x \in \mathbb{R}_{+}^2,~ \alpha_1 x_1 + \alpha_2 x_2 < \ell \}, $$
$$ \overset{\circ}{D_{\ell}} = \left\{ (0,x_2),~ 0 \leq x_2 \leq \frac{\ell}{\alpha_2} \right\} \cup \left\{ (x_1, 0),~ 0 \leq x_1 \leq \frac{\ell}{\alpha_1} \right\} \cup \{ x \in \mathbb{R}_{+}^2,~ \alpha_1 x_1 + \alpha_2 x_2 = \ell \}. $$
Two cases must be considered.
\begin{enumerate}
\item For $x \in \bar{D_{\ell}}$, the minimum $M_{\ell}$ is reached on $x$ if $x$ is a critical point that is
$$ \frac{\partial F}{\partial x_1}(x) = 2a x_1 + bx_2 +c_1 = 0, $$
$$ \frac{\partial F}{\partial x_2}(x) = bx_1 +c_2 = 0. $$
The only solution of this system is $p_1 = \left(-\frac{c_2}{b},~ \frac{2ac_2-bc_1}{b^2} \right)$.
\item For $\overset{\circ}{D_{\ell}}$, we will consider separately each set in the definition of $\overset{\circ}{D_{\ell}}$.
\begin{itemize}
\item For $x \in \left\{ (0,x_2),~ 0 \leq x_2 \leq \frac{\ell}{\alpha_2} \right\}$, $F(x) = c_2x_2$.\\
The minimum is reached for $x_2 = 0$ or $x_2 =\frac{\ell}{\alpha_2}$ according to the sign of $c_2$. Thus, the corresponding points in the plane are $p_2 = \left(0,0 \right)$ and $p_3 = \left(0, \frac{\ell}{\alpha_2} \right)$.
\item For $x \in \left\{ (x_1,0),~ 0 \leq x_1 \leq \frac{\ell}{\alpha_1} \right\}$, $F(x) = a x_1^2 + c_1x_1 = x_1 (ax_1 + c_1)$.\\
If $a > 0$, the minimum is reached for $x_1 = -\frac{c_1}{2a}$, which corresponds to $p_4 = \left(-\frac{c_1}{2a},0 \right)$.\\
If $a = 0$, the minimum is reached for $x_1 = 0$ or $x_1 = \frac{\ell}{\alpha_1}$, which corresponds to $p_2$ and $p_5$.
\item For $x \in \{ x \in \mathbb{R}_{+}^2,~ \alpha_1 x_1 + \alpha_2 x_2 = \ell \}$,
\begin{align*}
F(x) &= ax_1^2 + b\frac{x_1}{\alpha_2}(\ell - \alpha_1 x_1) + c_1 x_1 + \frac{c_2}{\alpha_2}(\ell - \alpha_1 x_1)\\
&= \left(a-b\frac{\alpha_1}{\alpha_2} \right)x_1^2 + \left(\frac{b}{\alpha_2}\ell + c_1 -c_2\frac{\alpha_1}{\alpha_2} \right)x_1 + \frac{c_2}{\alpha_2}\ell .
\end{align*}
If $\left(a-b\frac{\alpha_1}{\alpha_2} \right) \leq 0$, the minimum is reached for $x_1 = 0$ or $x_1 = \frac{\ell}{\alpha_1}$, which corresponds to $p_3$ and $p_5$.\\
If $\left(a-b\frac{\alpha_1}{\alpha_2} \right) > 0$, an easy computation shows that the minimum is reached for $x_1 = -d_1 \ell +d_2$ which corresponds to $p_6$.
\end{itemize}
\end{enumerate}
\end{proof}

\subsection{Proof of \Cref{prop:matrix_lower}}\label{ssec:app_1}

The proof is composed of two steps: first the calculation of the quantities $\hat{f}^-$ and $\hat{f}^+$ defined by \Cref{eq:def_f_lower}, then with \Cref{eq:optimal_lower}, we obtain the expressions of $\hat{U}^-$ and $\hat{U}^+$.\\

In this proposition, the set of classes used is
$$ \forall \ell \in \mathbb{N}, ~~ S^{-}_{\ell} = \{x \in \mathbb{N}^3,~ 2x_1 + 2x_2 + 5x_3 = \ell \}. $$

Starting from a state $x$ in class $\ell \geq 5$,
\begin{itemize}
\item after the reaction $X_3 \longrightarrow \emptyset$, the process ends in the class $\ell -5$,
\item after the reaction $X_1 \longrightarrow \emptyset$ or $X_2 \longrightarrow \emptyset$, the process ends in the class $\ell -2$,
\item after the reaction $X_1+X_2 \longrightarrow X_3$, the process ends in the class $\ell +1$,
\item after the reactions $\emptyset \longrightarrow X_1$ and $2X_1 \longrightarrow 3X_2$, the process ends in the class $\ell +2$.
\end{itemize}

Thus we have, with \Cref{eq:def_f_lower},
$$ \hat{f}_{\ell,m}^- = \begin{cases}
\begin{array}{ll}
0 & \text{ if $m < \ell-5$},\\
\max\limits_{x \in S^{-}_{\ell}} d_3 x_3 = \frac{d_3}{5}\ell  & \text{ if $\ell-5 \leq m \leq \ell-3$},\\
\max\limits_{x \in S^{-}_{\ell}} (d_1x_1 + d_2x_2 + d_3 x_3) = \max\Big(\frac{d_1}{2}, \frac{d_2}{2}, \frac{d_3}{5} \Big)\ell  & \text{ if $\ell-2 \leq m < \ell$},\\
\end{array}
\end{cases}
$$
$$ \hat{f}_{\ell,m}^+ = \begin{cases}
\begin{array}{ll}
\min\limits_{x \in S^{-}_{\ell}} \big(b_1(x_2+x_3)+b_2+\alpha x_1(x_1-1) + \beta x_1x_2\big) & \text{ if $m = \ell+1$},\\
\min\limits_{x \in S^{-}_{\ell}} \big(b_1(x_2+x_3)+b_2+\alpha x_1(x_1-1) \big)  & \text{ if $m = \ell+2$},\\
0 & \text{ if $\ell +3 \leq m$}.\\
\end{array}
\end{cases}
$$
Moreover,
\begin{align*}
\hat{f}_{\ell,\ell+2}^+ &= \min\limits_{x \in S^{-}_{\ell}} \big(b_1(x_2+x_3)+b_2+\alpha x_1(x_1-1) \big)\\
&= \min\limits_{x \in D_{\ell}} \left(\alpha x_{1}^2 - \Big(\alpha + \frac{2}{5}b_1 \Big)x_1 + \frac{3}{5}b_1 x_2\right) + b_2 + \frac{b_1}{5} \ell ~~\text{ where $D_{\ell} = \{x \in \mathbb{R}_{+}^2,~ 2x_1+2x_2 \leq \ell \}$}
\end{align*}
Using \Cref{lem:technical_calculs}, for $\ell$ large enough the minimum is reached in $p_4 = \left(\frac{\alpha+\frac{2}{5}b_1}{2\alpha}, 0 \right)$ and
$$ \hat{f}_{\ell,\ell+2}^+ = \frac{b_1}{5}\ell +  A(\theta),$$
with $A(\theta) = b_2 -\frac{(\alpha+ \frac{2}{5}b_1)^2}{4\alpha}$.\\

With the same computation, $ \hat{f}_{\ell,\ell+1}^+ = \frac{b_1}{5}\ell +  A(\theta)$.\\

The result follows from \Cref{eq:optimal_lower}.

\subsection{Proof of \Cref{prop:upper_processes_def1}}\label{ssec:app_2}

The proof is composed of two steps: first the calculation of the quantities $f^-$ and $f^+$ defined by \Cref{eq:def_f_upper}, then with \Cref{eq:optimal_upper}, we obtain the expressions of $U^-$ and $U^+$ from which we deduce the expression of the matrix $Q^+$.\\

In this proposition, the set of classes used is
$$ \forall \ell \in \mathbb{N}, ~~ S^{+}_{\ell} = \{x \in \mathbb{N}^3,~ 2x_1 + x_2 + x_3 = \ell \}. $$

Starting from a state $x$ in class $\ell \geq 2$,
\begin{itemize}
\item after the reactions $X_1 \longrightarrow \emptyset$ or $X_1+X_2 \longrightarrow X_3$, the process ends in the class $\ell -2$,
\item after the reactions $2X_1 \longrightarrow 3X_2$, $X_2 \longrightarrow \emptyset$ or $X_3 \longrightarrow \emptyset$, the process ends in the class $\ell -1$,
\item after the reaction $\emptyset \longrightarrow X_1$, the process ends in the class $\ell +2$.
\end{itemize}

Thus we have, with \Cref{eq:def_f_upper},
$$ f_{\ell,m}^- = \begin{cases}
\begin{array}{ll}
0 & \text{ if $m < \ell-2$},\\
\min\limits_{x \in S^{+}_{\ell}} (d_1 x_1 + \beta x_1 x_2) = 0  & \text{ if $m = \ell-2$},\\
\min\limits_{x \in S^{+}_{\ell}} \left(\alpha x_1(x_1 - 1) + \sum\limits_{i=1}^3 d_ix_i \right)  & \text{ if $m = \ell-1$},\\
\end{array}
\end{cases}
$$
$$ f_{\ell,m}^+ = \begin{cases}
\begin{array}{ll}
\max\limits_{x \in S^{+}_{\ell}} \big(b_1 (x_2+x_3)+b_2\big) = b_1\ell+b_2 & \text{ if $\ell+1 \leq m \leq \ell+2$},\\
0 & \text{ if $\ell +3 \leq m$}.\\
\end{array}
\end{cases}
$$
Moreover,
\begin{align*}
f_{\ell,\ell-1}^- &= \min\limits_{x \in S^{+}_{\ell}} \left(\alpha x_1(x_1 - 1) + \sum\limits_{i=1}^3 d_ix_i \right)\\
&= \min\limits_{x \in D_{\ell}} \left(\alpha x_{1}^2 + \Big(d_1-\alpha -2 d_3 \Big)x_1 + (d_2-d_3)x_2\right) + d_3 \ell, ~~\text{ where $D_{\ell} = \{x \in \mathbb{R}_{+}^2,~ 2x_1+x_2 \leq \ell \}$.}
\end{align*}
Using \Cref{lem:technical_calculs}, for $\ell$ large enough the minimum is reached in
$$p_4 = \left(\frac{\alpha+2d_3-d_1}{2\alpha}, 0 \right) \text{~ or ~} p_6 = \left(\ell - \frac{\alpha+2d_2-d_1}{\alpha}, 0 \right),$$ and
$$ f_{\ell,\ell-1}^- = \min(d_2,d_3)\ell -  C(\theta), $$
with $C(\theta) = \frac{(\alpha+2\min(d_2,d_3)-d_1)^2}{4\alpha}\mathds{1}_{\alpha+2\min(d_2,d_3)-d_1 \geq 0}(\theta)$.\\

The result follows directly from \Cref{eq:optimal_upper}.

\subsection{Proof of \Cref{prop:upper_processes_def2}}\label{ssec:app_3}

The proof is composed of two steps: first the calculation of the quantities $f^-$ and $f^+$ defined by \Cref{eq:def_f_upper}, then with \Cref{eq:optimal_upper}, we obtain the expressions of $U^-$ and $U^+$ from which we deduce the expression of the matrix $\tilde{Q}^+$.\\

In this proposition, the set of classes used is
$$ \forall \ell \in \mathbb{N}, ~~ \tilde{S}^{+}_{\ell} = \{x \in \mathbb{N}^3,~ x_1 + x_2 + x_3 = \ell \}. $$

Starting from a state $x$ in class $\ell \geq 1$,
\begin{itemize}
\item after the reactions $X_1+X_2 \longrightarrow X_3$, $X_1 \longrightarrow \emptyset$, $X_2 \longrightarrow \emptyset$ or $X_3 \longrightarrow \emptyset$, the process ends in the class $\ell -1$,
\item after the reactions $\emptyset \longrightarrow X_1$ or $2X_1 \longrightarrow 3X_2$, the process ends in the class $\ell +1$.
\end{itemize}

Thus we have, with \Cref{eq:def_f_upper},
$$ f_{\ell,m}^- = \begin{cases}
\begin{array}{ll}
0 & \text{ if $m < \ell-1$},\\
\min\limits_{x \in \tilde{S}^{+}_{\ell}} \left(\beta x_1 x_2 + \sum\limits_{i=1}^3 d_ix_i \right) = \min(d_1,d_2,d_3)\ell  & \text{ if $m = \ell-1$},\\
\end{array}
\end{cases}
$$
$$ f_{\ell,m}^+ = \begin{cases}
\begin{array}{ll}
\max\limits_{x \in \tilde{S}^{+}_{\ell}} \Big(\alpha x_1 (x_1-1) + b_1 (x_2+x_3)+b_2\Big) = \alpha \ell^2 -\alpha \ell+b_2 & \text{ if $m = \ell+1$},\\
0 & \text{ if $\ell +2 \leq m$}.\\
\end{array}
\end{cases}
$$
The result follows directly from \Cref{eq:optimal_upper}.

\end{appendix}

\end{document}